\newtheoremstyle{exercise} 
  {3pt} 
  {3pt} 
  {\scriptsize\rmfamily} 
  {
\parindent} 
  {\rmfamily\scshape} 
  {.} 
  {.5em} 
  {} 
\newtheoremstyle{newplain}
  {5pt}
  {5pt}
  {\itshape}
  {}
  {\rmfamily\scshape}
  {. ---}
  {.5em}
  {}
\newtheoremstyle{newremark}
  {5pt}
  {5pt}
  {\rmfamily}
  {}
  {\rmfamily\scshape}
  {. ---}
  {.5em}
  {}
\theoremstyle{newplain}
\newtheorem*{Theorem*}{Theorem} 
\theoremstyle{newplain}
\newtheorem{Theorem}{Theorem}
\newtheorem{Lemma}[Theorem]{Lemma}
\newtheorem{Corollary}[Theorem]{Corollary}
\newtheorem{Proposition}[Theorem]{Proposition}
\newtheorem{Conjecture}[Theorem]{Conjecture}
\newtheorem{Definition}[Theorem]{Definition}
\theoremstyle{newremark}
\newtheorem{Remark}[Theorem]{Remark}
\newtheorem{Claim}[Theorem]{Claim}
\theoremstyle{exercise}
\numberwithin{Theorem}{section}
\numberwithin{Exercise}{subsection}
\newcommand{\R}{\mathbb{R}} 
\newcommand{\Rm}{\R^m}
\newcommand{\ind}{\mathbbm{1}} 
\newcommand{\calC}{\mathscr{C}}
\newcommand{\calD}{\mathscr{D}}
\newcommand{\calE}{\mathscr{E}}
\newcommand{\calG}{\mathscr{G}}
\newcommand{\calH}{\mathscr{H}}
\newcommand{\calL}{\mathscr{L}}
\newcommand{\calM}{\mathscr{M}}
\newcommand{\calQ}{\mathscr{Q}}
\DeclareMathOperator{\rmHom}{\mathrm{Hom}} 
\DeclareMathOperator{\rmsupp}{\mathrm{supp}} 
\DeclareMathOperator{\rmtrace}{\mathrm{trace}} 
\newcommand{\lseg}{\boldsymbol{[}\!\boldsymbol{[}}
\newcommand{\rseg}{\boldsymbol{]}\!\boldsymbol{]}}
\def\XXint#1#2#3{{%
\setbox0=\hbox{$#1{#2#3}{\int}$}
\vcenter{\hbox{$#2#3$}}\kern-.5\wd0}}
\newcommand{\lno}{\left\bracevert}
\newcommand{\rno}{\right\bracevert}
\renewcommand{\leq}{\leqslant}
\renewcommand{\geq}{\geqslant}
\renewcommand{\subset}{\subseteq}
\newcommand{\QQl}{\mathscr{Q}_Q(\ell_2)}
\begin{document}
\title[Multiple valued maps]{Multiple valued maps into a separable Hilbert space that almost minimize their $p$ Dirichlet energy or are squeeze and squash stationary}

\begin{abstract}
Let $f : U\subset\Rm \to \calQ_Q(\ell_2)$ be of Sobolev class $W^{1,p}$, $1 < p < \infty$. If $f$  almost minimizes its $p$ Dirichlet energy then $f$ is H\"older continuous. If $p=2$ and $f$ is squeeze and squash stationary then $f$ is in VMO.
\end{abstract}

\author{Philippe Bouafia}
\address{
Equipe d\'analyse harmonique,
Universit\'e Paris-Sud,
B\'atiment 425,
91405 Orsay Cedex,
France
}
\email{philippe.bouafia@gmail.com}

\author{Thierry De Pauw}
\address{
Institut de Math\'ematiques de Jussieu,
Equipe G\'eom\'etrie et Dynamique,
B\^atiment Sophie Germain,
Case 7012,
75205 Paris Cedex 13,
France
}
\email{depauw@math.jussieu.fr}

\author{Changyou Wang}
\address{
Department of Mathematics,
University of Kentucky,
Lexington, KY 40506,
USA}
\email{cywang@ms.uky.edu}
\maketitle

\section{Introduction}
We let $\ell_2$ denote the usual infinite dimensional separable Hilbert space.
For any positive integer $Q$, let $\QQl$ denote the space of unordered $Q$-tuples of elements in $\ell_2$. Thus $\QQl$ is the quotient of $(\ell_2)^Q$ under the action of the symmetric group given by
$$
\sigma \cdot (u_1, \dots, u_Q) \mapsto (u_{\sigma(1)}, \dots, u_{\sigma(Q)}).
$$
The equivalence class of $(u_1, \dots, u_Q)$ will be denoted throughout $u = \oplus_{i=1}^Q \lseg u_i \rseg$.
The distance between two points $u = \oplus_{i=1}^Q \lseg u_i \rseg$ and $v=\oplus_{i=1}^Q \lseg v_i\rseg$ in $\QQl$ is defined by
\begin{equation}
\calG(u,v)=\min_{\sigma\in S_Q}\sqrt{\sum_{i=1}^Q \|u_i-v_{\sigma(i)}\|^2}.
\end{equation}
For the sake of simplicity, we will often use the notation
$$ |u| := \calG(u , Q\lseg 0 \rseg).$$

If $e_1, \dots, e_m$ is an orthonormal basis of $\R^m$, then
$$ \langle A, B \rangle := \sum_{i=1}^m \langle A(e_i), B(e_i) \rangle$$
defines a scalar product in $\mathrm{Hom}(\R^m, \ell^2)$, which is independent of the choice of $e_1, \dots, e_m$. The induced norm $\|A\|_{{\rm{HS}}} = \sqrt{\langle A, A \rangle}$ is the Hilbert-Schmidt norm of $A$.
Whenever $D = \oplus_{i=1}^Q \lseg D_i \rseg \in \calQ_Q(\mathrm{Hom}(\R^m, \ell_2))$ is a unordered $Q$-tuple of linear maps, we define
$$ \lno D \rno := \sqrt{\sum_{i=1}^Q \| D_i \|_{{\rm{HS}}}^2}. $$
We refer to \cite{BDPG} for the definition of multiple valued Sobolev space $W_p^1(U, \QQl)$, $U \subset \Rm$ open and $1 < p < \infty$. We merely mention that each $f \in W_p^1(U, \QQl)$ is approximately differentiable almost everywhere, its approximate differential $Df$ being itself a $Q$-valued map $\Rm \to \calQ_Q(\rmHom(\Rm,\ell_2))$. The $p$-energy of $f$ is
$$
\calE_p(f, U) := \left(\int_U \lno Df \rno^p \right)^{1/p} < \infty .
$$

Existence of minimizers of the $p$-energy for the Dirichlet problem with Lipschitz boundary data is established in \cite{BDPG}. In the present paper we prove their (interior) H\"older continuity. In fact we work in the more general setting of almost minimizers which we now recall. Let $\omega:[0,1]\to\mathbb R_+$ be a monotone increasing function with
$\displaystyle\omega(0)=\lim_{r\downarrow 0^+}\omega(r)=0$. Such a $\omega$ will be referred to as a {\em modular function}. For $1<p<+\infty$ and an open set
$U\subset\mathbb R^m$, we say that $u\in W^{1}_{p}(U, \QQl)$ is a $(\omega, p)$-Dir-minimizing function, if for any ball $B(x,r)\subset U$,
\begin{equation}\label{p-min}
\calE_p(u, B(x,r))\le (1+\omega(r))\calE_p(v, B(x,r)),
\end{equation}
whenever $v\in W^{1}_{p}(B(x,r),\QQl)$ and $\rmtrace u_{| B(x,r)} = \rmtrace v_{| B(x,r)}$. When $\omega=0$ we simply call $u$ a  $p$-Dir-minimizing function. Our first result is the following.

\begin{Theorem} \label{holder_reg} For any modular function $\omega$, there exists $\delta=\delta(p, m, \omega, Q)\in (0,1)$ such that
any $(\omega,p)$-Dir-minimizing function $u\in W^{1}_{p}(U, \QQl)$ is H\"older continuous in $U$
with an exponent $\delta$. Moreover, for any ball $B(x,2r)\subset U$,
\begin{equation}
\|u\|_{C^\delta(B(x,r))}\le C r^{p-m}\calE_p(u, B(x,2r)). \label{holder-est}
\end{equation}
\end{Theorem}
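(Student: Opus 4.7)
The plan is to establish a Morrey-type decay for the local $p$-energy $\phi(y, \rho) := \int_{B(y, \rho)} \lno Du \rno^p$ and then deduce \eqref{holder-est} via the Campanato characterization of H\"older continuity, applied to the real-valued Sobolev functions $y \mapsto \calG(u(y), u(z))$ (whose weak gradient is dominated by $\lno Du \rno$, cf.~\cite{BDPG}). The case $p \ge m$ reduces to a direct Sobolev embedding, so I concentrate on $p < m$.

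For $B(x_0, r) \subset U$, I would use slicing along the radial distance to produce, for a.e.~$\rho \in (0, r)$, a trace $u|_{\partial B(x_0, \rho)} \in W^{1,p}(\partial B(x_0, \rho), \QQl)$ satisfying
$$\int_{\partial B(x_0, \rho)} \lno D^\tau u \rno^p \, d\calH^{m-1} \le \phi'(x_0, \rho),$$
where $D^\tau$ denotes the tangential gradient. Using the measurable-selection representation of $\QQl$-valued Sobolev maps from \cite{BDPG}, I would then define the $0$-homogeneous competitor $v(x) := u\bigl(x_0 + \rho(x - x_0)/|x - x_0|\bigr)$ on $B(x_0, \rho)$, glued to $u$ outside. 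A polar-coordinate computation yields
$$\int_{B(x_0, \rho)} \lno Dv \rno^p \le \frac{\rho}{m-p} \int_{\partial B(x_0, \rho)} \lno D^\tau u \rno^p \, d\calH^{m-1}.$$
Inserting $v$ into \eqref{p-min} and integrating the resulting differential inequality
$$\phi(x_0, \rho) \le \frac{(1+\omega(r))^p}{m-p} \, \rho \, \phi'(x_0, \rho)$$
from $\theta r$ to $r$ gives the \emph{critical} decay $\phi(x_0, \theta r) \le \theta^{(m-p)/(1+\omega(r))^p}\phi(x_0, r)$.

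The next step is to upgrade this to a super-critical Morrey decay $\phi(x_0, \theta_* r) \le \theta_*^{m - p + \delta p}\phi(x_0, r)$ for some fixed $\theta_* \in (0, 1)$, $\delta > 0$, and all $r \le r_0 = r_0(\omega, p, m, Q)$. The idea is to combine the radial comparison with a Poincar\'e inequality on $\partial B(x_0, \rho)$ applied to the real-valued oscillation $\calG(u(\cdot), u_{x_0, \rho})$---for an appropriately chosen ``center'' $u_{x_0, \rho} \in \QQl$---and thereby absorb part of the tangential energy, sharpening the competitor bound. Choosing $r_0$ small so that $(1+\omega(r))^p$ is close enough to $1$ then yields the strict improvement. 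Iterating this dyadically across scales produces the Morrey bound $\phi(x_0, \rho) \le C(\rho/r)^{m-p+\delta p}\phi(x_0, r)$, and the Campanato characterization (applied to $\calG(u(y), u(z))$) then delivers \eqref{holder-est}.

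The main obstacle is the strict super-critical improvement. For classical single-valued harmonic functions this gain follows directly from the Euler--Lagrange PDE; here one has only an integral variational inequality and no linear structure on the target, so the improvement must be extracted from the comparison construction itself. The infinite-dimensional nature of the target $\ell_2$ further forces one to replace the smooth retractions available for $\calQ_Q(\R^N)$ by measurable selections, both when verifying the admissibility of the radial competitor and when constructing the center $u_{x_0, \rho}$; getting the constants and the perturbation factor $(1+\omega(r))^p$ to line up uniformly across scales is what pins down the dependence $\delta = \delta(p, m, \omega, Q)$ in the statement.
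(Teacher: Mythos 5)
Your overall scheme (radial comparison competitors, a strict super-critical Morrey decay, then Morrey/Campanato embedding) matches the paper's, and you correctly locate the difficulty in the super-critical gain. But the mechanism you propose for that gain --- a Poincar\'e inequality on $\partial B(x_0,\rho)$ for $\calG(u(\cdot),u_{x_0,\rho})$, with a well-chosen center, absorbing part of the tangential energy --- only works when the $Q$ sheets of the boundary datum are close together. Concretely, the paper's radial comparison (Lemma \ref{p-min0}) uses the $\alpha$-homogeneous competitor $v_\alpha(x)=\|x\|^\alpha g(x/\|x\|)$ with $\alpha>0$ (your $0$-homogeneous competitor only reproduces the critical constant $\tfrac1{m-p}$ and gives no gain), and the price of taking $\alpha>0$ is a term $\alpha^p\int_{\partial B}|g|^p$; since the energy is invariant only under translations by a single vector of $\ell_2$, the quantity $|g|=\calG(g,Q\lseg 0\rseg)$ can be controlled by the boundary energy via Poincar\'e only under the hypothesis \eqref{diameter} that $\mathrm{diam}^p(\rmsupp\overline u)\le M\,\calE_p(u,\partial B)$. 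If the sheets of $\overline u$ are widely separated relative to the energy (think of $u$ close to $\lseg c_1\rseg\oplus\lseg c_2\rseg$ with $\|c_1-c_2\|$ huge), no choice of center $u_{x_0,\rho}$ rescues the estimate, and your argument stalls exactly at the step you yourself flag as the main obstacle.

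The paper handles this second regime by machinery that is absent from your proposal: when $d(\overline u)^p> M\,\calE_p(u,\partial B)$ it retracts $u$ by a $1$-Lipschitz retraction onto a small ball around a well-separated point (Proposition \ref{lip_retra}, with the point produced by Lemma \ref{large_separation}), splits the retracted map as $v\oplus w$ with $K,L<Q$ values (Proposition \ref{split}), applies an induction hypothesis on $Q$ to each factor, and then glues the resulting interior competitor back to the original boundary datum across a thin annulus by the Luckhaus-type interpolation result (Theorem \ref{ext}, Lemma \ref{1d-ext}); the gluing cost is controlled by $\int_{\partial B}\calG^p(u,\Phi(u))$, made small through a Chebyshev--Sobolev estimate on the set where the retraction acts. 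The induction on $Q$, the separation/collapse dichotomy, and the intrinsic interpolation lemma (needed precisely because there is no Almgren bi-Lipschitz embedding of $\QQl$ available in the infinite-dimensional setting) constitute the real content of the proof, so as written your proposal has a genuine gap at its central step, even though the outer Morrey-iteration shell and the final Campanato conclusion are sound.
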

 The H\"older continuity of $2$-Dir-minimizing function into $\calQ_Q(\ell_2^n)$, i.e. when the target space is finite dimensional,  was first proved by F.J. Almgren
in his seminal work \cite{Almgren} and proved again by C. De Lellis and E. Spadaro \cite{DS} \cite{D} very recently.
In order to establish our result we define comparison maps by means of homogeneous extensions of the local boundary data (Lemma \ref{p-min0}) and an interpolation procedure (Theorem \ref{ext}) inspired by S. Luckhaus \cite{Lu}.

The Dir-minimizing property of $f$ leads to stationarity with respect to domain and range variations: We say $f$ is, respectively, {\em squeeze} and {\em squash stationary}. When $Q \geq 2$ the squeeze and squash stationarity of $f$ does not imply that it locally minimizes its energy. Thus the above regularity result does not apply to stationary maps. Here we assume $p=2$ and we contribute the VMO regularity of squeeze and squash stationary maps $f \in W^1_2(U;\calQ_Q(\ell_2))$, Proposition \ref{VMO}. We observe that the measure $\displaystyle\mu_f : A \mapsto \int_A \lno Df \rno^2$ is $m-2$ monotonic, i.e. that $r \mapsto r^{2-m} \mu_f(B(x,r))$ is nondecreasing, $x \in U$. We also notice, as other authors have, that the monotonicity of frequency, established by F.J. Almgren, depends solely upon the stationary property of $f$. This in turn shows that $\Theta^{m-2}(\mu_f,x)=0$ for all $x \in U$, which implies VMO via the Poincar\'e inequality. Furthermore $\displaystyle\lim_{r \to 0} r^{2-m} \mu_f(B(x,r))=0$ uniformly in $x \in U$ according to Dini's Theorem, which is in fact a kind of uniform VMO property. The continuity of $f$ would ensue from a sufficiently fast decay of
$\displaystyle\omega(r) = \sup_x r^{2-m} \mu_f(B(x,r))$. We establish the upper bound $\omega(r) \leq C | \log r |^{-\alpha}$ for some $0 < \alpha < 1$, which does not verify the suitable Dini growth condition.

\section{Radial comparison}

 The symbol $\oplus$ also denotes the concatenation operation
$$
\calQ_K(\ell_2) \times \calQ_L(\ell_2) \to \calQ_{K+L}(\ell_2).
$$
The barycenter of $u$ is
$$
\boldsymbol{\eta}(u) := \frac{1}{Q} \sum_{i=1}^Q u_i \in \ell_2
$$
and the translate of $u$ by $a \in \ell_2$ is
$$ \tau_a(u) := \bigoplus_{i=1}^Q \lseg u_i - a \rseg. $$

There are two crucial ingredients in the proof of the Theorem \ref{holder_reg}: a radial comparison lemma and an interpolation lemma.
This section is devoted to the radial comparison lemma. Let $B \subset \R^m$ denote the unit open ball. If $u \in W^1_p(B, \QQl)$, we let $\calC(p,Q,u_{|\partial B})$ be
\begin{equation} \calC(p, Q, u_{|\partial B}):=\inf\Big\{\calE_p(v, B):  v\in W^{1}_{p}(B, \QQl),  v= u \ {\rm{on}}\ \partial B\Big\}. \label{p-inf}
\end{equation}

\begin{Lemma} \label{p-min0} For any $M>0$, there exists $\eta_0 = \eta_0(p,m,Q,M)>0$  such that if
$u\in W^{1}_{p}(B, \QQl)$  satisfies
\begin{equation} \ {\rm{diam}}^p(\rmsupp \overline{u})\le M \calE_p(u,\partial B), \label{diameter}
\end{equation}
where $\overline{u}\in \QQl$ is a mean of $u$ on $\partial B$,
then we have
\begin{equation}
\calC(p, Q, u_{|\partial B}) \le \left(\frac{1}{m-p}-2\eta_0\right) \calE_p(u,\partial B), \label{p-inf1}
\end{equation}
In particular, there exists $\varepsilon_0 = \varepsilon_0(p,n,Q,M) > 0$ such that if $u\in W^{1}_{p}(B, \QQl)$ is an $(\omega,p)$-Dir-minimizing function and
$\omega(1)\le\varepsilon_0$, then we have
\begin{equation}\label{p-min1}
\calE_p(u, B)\le \Big(\frac{1}{m-p}-\eta_0\Big)  \calE_p(u,\partial B).
\end{equation}
\end{Lemma}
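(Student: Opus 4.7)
The strategy is to construct an explicit competitor $v \in W^1_p(B, \QQl)$ with $v = u$ on $\partial B$ whose energy strictly improves on the baseline produced by the $0$-homogeneous extension $v_0(x) := u(x/|x|)$. A direct spherical-coordinates computation will show
$$\int_B \lno Dv_0 \rno^p = \frac{1}{m-p} \int_{\partial B} \lno D_\tau u \rno^p,$$
which is exactly the prefactor $\frac{1}{m-p}$ appearing in \eqref{p-inf1}. The required saving of $2\eta_0$ must therefore be produced by modifying $v_0$ in the interior.

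Concretely, for a small parameter $s \in (0, 1/2)$ to be tuned, I would set
$$v(x) := \begin{cases} v_0(x), & 1-s \le |x| \le 1, \\ w(x), & 1-2s \le |x| \le 1-s, \\ \overline{u}, & |x| \le 1-2s, \end{cases}$$
where $w$ is furnished by the Luckhaus-type interpolation result (Theorem \ref{ext}) between the outer trace $u(x/|x|)$ and the constant value $\overline{u}$. The outer annulus will contribute $\tfrac{1-(1-s)^{m-p}}{m-p}\int_{\partial B}\lno D_\tau u\rno^p$; the inner ball contributes zero; and the transition annulus is controlled by a Luckhaus-type bound of the shape
$$C_0 \bigl( s \int_{\partial B} \lno D_\tau u \rno^p \;+\; s^{1-p} \int_{\partial B} \calG(u, \overline{u})^p \bigr).$$

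The hypothesis \eqref{diameter} enters in two ways: it justifies a multi-valued Poincar\'e inequality on $\partial B$ of the form $\int_{\partial B}\calG(u,\overline{u})^p \le C_P \int_{\partial B}\lno D_\tau u\rno^p$, and it ensures that the boundary data lie in a tube where Theorem \ref{ext} produces a coherent extension (the construction requires sheets of $u$ to be consistently trackable, which is possible when $\rmsupp \overline{u}$ is small relative to the energy scale). Collecting terms, $\calE_p(v, B)^p$ is at most a prefactor of the form $\tfrac{1-(1-s)^{m-p}}{m-p} + C_1 s + C_2 C_P s^{1-p}$ times $\calE_p(u, \partial B)^p$, and one would then select $s = s(p, m, Q, M)$ so that this prefactor sits strictly below $\tfrac{1}{m-p}$ by a definite amount $2\eta_0$. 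The ``in particular'' clause follows immediately from the $(\omega,p)$-Dir-minimizing property: combining $\calE_p(u,B) \le (1+\omega(1))\calC(p,Q,u|_{\partial B})$ with \eqref{p-inf1} yields \eqref{p-min1} after choosing $\veps_0$ small enough that $(1+\veps_0)\bigl(\tfrac{1}{m-p} - 2\eta_0\bigr) \le \tfrac{1}{m-p} - \eta_0$.

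The main technical obstacle is the $s^{1-p}$ blow-up in the Luckhaus cost: a plain Poincar\'e estimate carries no smallness in $s$, so naively this term can swamp the saving. Overcoming it will likely require using \eqref{diameter} more finely---for instance, after normalizing $\calE_p(u,\partial B) = 1$ so that $\rmdiam\,\rmsupp \overline{u} \le M^{1/p}$, arranging the interpolation to exploit the genuine (not merely Poincar\'e-derived) closeness of $u$ to $\overline{u}$---or appealing to Theorem \ref{ext} in a sharpened form. Calibrating this balance in the $\QQl$-valued setting, where sheets may permute and no linear averaging is directly available, is the technical heart of the lemma.
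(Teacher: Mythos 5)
Your construction does not establish the lemma, and the place where it fails is precisely the point you defer: the claim that one can ``select $s=s(p,m,Q,M)$ so that this prefactor sits strictly below $\tfrac1{m-p}$''. With your competitor the transition annulus costs $C_2\, s^{1-p}\int_{\partial B}\calG(u,\overline u)^p$, and the only control you invoke on that integral is a Poincar\'e inequality on the sphere, which yields $C_P\int_{\partial B}\lno D_{\mathbb S^{m-1}}u\rno^p$ with a fixed constant and no smallness whatsoever. The resulting prefactor $\tfrac{1-(1-s)^{m-p}}{m-p}+C_1s+C_2C_Ps^{1-p}$ blows up as $s\to 0$ (since $1-p<0$) and, for any fixed $s\in(0,\tfrac12)$, its last term is bounded below by $C_2C_P$, a constant inherited from Theorem \ref{ext} and Poincar\'e which has no reason to be smaller than $\tfrac1{m-p}$; so no admissible choice of $s$ produces the definite gain $2\eta_0$. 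Nor does \eqref{diameter} help in the way you hint: it bounds $\mathrm{diam}\,\rmsupp\overline u$ relative to the energy, which says nothing about pointwise closeness of $u$ to $\overline u$ and does not shrink the Poincar\'e constant. Acknowledging this as ``the technical heart of the lemma'' leaves exactly that heart unproved.

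The paper's proof takes a different and more elementary route that supplies the missing smallness mechanism, and it uses no interpolation at all (Theorem \ref{ext} is needed only later, in the inductive proof of Theorem \ref{holder_reg}). Instead of the degree-$0$ extension, one takes the homogeneous extension of small positive degree, $v_\alpha(x)=\|x\|^{\alpha}g(x/\|x\|)$ with $g=u_{|\partial B}$, for which polar coordinates give $\int_B\lno Dv_\alpha\rno^p=\frac{1}{m-p+p\alpha}\int_{\partial B}\bigl(\alpha^2|g|^2+\lno D_{\mathbb S^{m-1}}g\rno^2\bigr)^{p/2}$. The denominator already yields a saving \emph{linear} in $\alpha$, whereas the new radial term costs only $O(\alpha^p)$: for $1<p\le2$ use $(a+b)^{p/2}\le a^{p/2}+b^{p/2}$, and for $p>2$ use $(a+b)^{p/2}\le(1+\delta)a^{p/2}+C\delta^{-(p/2-1)}b^{p/2}$ with $\delta=\alpha^2$. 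Since $p>1$, $\alpha^p\ll\alpha$ for small $\alpha$, which is exactly the lever your plan lacks. The hypothesis \eqref{diameter} enters only to control the constant in the $O(\alpha^p)$ term: after normalizing the boundary energy to $1$ and translating by the barycenter $\boldsymbol{\eta}(\overline u)$ (allowed because $\calC(p,Q,\cdot)$ is translation invariant), Poincar\'e on $\partial B$ together with the bound on $\mathrm{diam}\,\rmsupp\overline u$ gives $\int_{\partial B}|g|^p\le C(1+M^p)$, so $\alpha$ can be chosen depending only on $p,m,Q,M$, producing \eqref{p-inf1}. Your ``in particular'' step is correct and agrees with the paper, which takes $\varepsilon_0\le(m-p)\eta_0$.
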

\begin{proof} After multiplying $u$ by a constant if necessary we may assume
$$ \calE_p(u,\partial B)=1.$$ Abbreviate $g = u_{|\partial B}$. For $\alpha>0$ to be chosen later, consider the radial competitor map
$$\forall x \in B, \quad v_\alpha(x)=\|x\|^\alpha g\left(\frac{x}{\|x\|}\right).$$
Since $v_\alpha=u$ on $\partial B_1$, it follows from the inequality (\ref{p-inf}) that
\begin{equation}\label{p-min2}
\calC(p, Q, u_{|\partial B})\le \calE_p(v_\alpha, B).
\end{equation}
Now we calculate $\calE_p(v_\alpha, B)$ as follows. Using the polar coordinates $(r,\theta) \in (0,1]\times\mathbb S^{m-1}$,
we have
$$\lno Dv_\alpha\rno=r^{\alpha-1}\Big\{\alpha^2|g(\theta)|^2+\lno D_{\mathbb S^{m-1}}g(\theta)\rno^2\Big\}^\frac12.$$
Hence
\setlength\arraycolsep{1.4pt}
\begin{eqnarray}
&&\calE_p(v_\alpha, B)\nonumber\\
&=&\int_0^1 r^{m-1+p(\alpha-1)}\int_{\partial B} \Big(\alpha^2|g(\theta)|^2+\lno D_{\mathbb S^{m-1}}g(\theta)\rno^2\Big)^{\frac{p}2}\,d\calH^{m-1}\theta\,dr\nonumber\\
&=&\frac{1}{m-p+p\alpha} \int_{\partial B}\Big(\alpha^2|g(\theta)|^2+\lno D_{\mathbb S^{m-1}}g(\theta)\rno^2\Big)^{\frac{p}2}\,d\calH^{m-1}\theta\,dr. \label{p-energy-v}
\end{eqnarray}
Next we want to estimate (\ref{p-energy-v}) from above. We distinguish between two cases.\\
\noindent{\it Case 1}: $p \in (1, 2]$.  Since $0<\frac{p}2\le1$, applying the elementary inequality
$$(a+b)^{\frac{p}2}\le a^{\frac{p}2}+b^{\frac{p}2}, \ a, \ b>0,$$ 
we have
\setlength\arraycolsep{1.4pt}
\begin{eqnarray*}
\big(\alpha^2|g(\theta)|^2+\lno D_{\mathbb S^{m-1}}g(\theta)\rno^2\big)^{\frac{p}2}
& \le & \alpha^p |g(\theta)|^p+\lno D_{\mathbb S^{m-1}}g(\theta)\rno^p,
\end{eqnarray*}
and hence
\setlength\arraycolsep{1.4pt}
\begin{eqnarray}
&&\calE_p(v_\alpha, B)\nonumber\\
&\leq &
\frac{1}{m-p+p\alpha} \big(\alpha^p\int_{\partial B} |g(\theta)|^p\,d\calH^{m-1}\theta
+\int_{\partial B} \lno D_{\mathbb S^{m-1}}g(\theta)\rno^p\,d\calH^{m-1}\theta\big) \nonumber\\
&\le & \frac{1}{m-p+p\alpha} \big(1+\alpha^p\int_{\partial B} |g(\theta)|^p\,d\calH^{m-1}\theta\big).
\end{eqnarray}

\noindent{\it Case 2}: $2<p\le n$. In this case, recall that the following inequality holds: there exists $C=C(p)>0$ such that
for any $\delta>0$,
$$\forall a,b >0, \quad (a+b)^{\frac{p}2}\le (1+\delta) a^{\frac{p}2} +C\delta^{-(\frac{p}2-1)}b^{\frac{p}2}.$$
Applying this inequality, we have that for any $0<\delta<1$,
\setlength\arraycolsep{1.4pt}
\begin{eqnarray}
&&\calE_p(v_\alpha, B)\nonumber\\
& \leq & \frac{C\delta^{-(\frac{p}2-1)}\alpha^p}{m-p+p\alpha} \int_{\partial B} |g(\theta)|^p\,d\calH^{m-1}\theta \nonumber \\
& & + \frac{1+\delta}{m-p+p\alpha}\int_{\partial B} \lno D_{\mathbb S^{m-1}}g(\theta)\rno^p\,d\calH^{m-1}\theta \nonumber\\
&\leq & \frac{1}{m-p+p\alpha} \big(C\delta^{-(\frac{p}2-1)}\alpha^p\int_{\partial B} |g(\theta)|^p\,d\calH^{m-1}\theta
+(1+\delta)\big).
\end{eqnarray}
To estimate $\int_{\partial B}|g(\theta)|^p\,d\calH^{m-1}\theta$, we argue as in \cite{DS} page 39. Write $\overline{u} =: \oplus_{i=1}^Q \lseg \overline{u}_i \rseg$. Let
$\hat{u}=\tau_{\boldsymbol{\eta}(\overline{u})}(u)$ and $\hat{g}=\tau_{\boldsymbol{\eta}(\overline{u})}(g)$ denote the translations of $u$ and $g$ by $\boldsymbol{\eta}(\overline{u})$. It is easy to see that
$\displaystyle\hat{g}=\hat{u}|_{\partial B}$.
It is clear that $\overline{\hat{g}} := \tau_{\boldsymbol{\eta}(\overline{u})}(\overline{u})$ is a mean of $\hat {g}$, and
$$|\overline{\hat g}|^2=\sum_{i=1}^Q \|\overline{u}_i - \boldsymbol{\eta}(\overline{u})\|^2\le Q {\rm{diam}}^2\rmsupp \overline{u} \le Q M^2.$$
By the Poincar\'e inequality, we have
\setlength\arraycolsep{1.4pt}
\begin{eqnarray}
\int_{\partial B}|\hat {g}|^pd\calH^{m-1} & \le &
2^{p}\big(\int_{\partial B} \calG({\hat g}, \overline{\hat g})^pd\calH^{m-1}
+\int_{\partial B}|\overline{\hat g}|^pd\calH^{m-1}\big) \nonumber \\
& \le &  C\left(\calE_p(\hat{u},\partial B)+M^p\right) \le C(1+M^p).
\end{eqnarray}

Since $\calC(p,Q, u_{|\partial B})$ is invariant under translations of $u_{|\partial B}$
we conclude that
\setlength\arraycolsep{1.4pt}
\begin{eqnarray*}
\calC(p,Q, u_{|\partial B}) &=&
\calC(p,Q, \tau_{\boldsymbol{\eta}(\overline{u})}(u)_{|\partial B})\nonumber\\
& \leq &
\calE_p(\tau_{\boldsymbol{\eta}(\overline{u})}(v_\alpha), B)\nonumber\\
&\leq &
\calM(m,p,M, \alpha, \delta)
\end{eqnarray*}
where
\begin{equation}\label{m-estimate}
\calM(m,p,M,\alpha,\delta) :=
\begin{cases}\frac{1}{m-p+p\alpha}\left(1+C(1+M^p)\alpha^p\right) & 1<p\le 2\\
\frac{1}{m-p+p\alpha}\left((1+\delta)+C\delta^{-(p/2-1)}(1+M^p)\alpha^p\right) & 2<p\le n.
\end{cases}
\end{equation}
Now we need to show the following claim: \\
\noindent{\it Claim}. There exist $\alpha_0>0$ and $\delta_0>0$ depending only on $p, m, M$ such that
\begin{equation}\label{gap2}
\calM(m,p, M, \alpha_0,\delta_0)<\frac{1}{m-p}.
\end{equation}
To establish (\ref{gap2}), we first consider the case $1<p\le 2$. By the definition, we have
$$
 \calM(m,p,M,\alpha,\delta)=\frac{1}{m-p+p\alpha}\left(1+C(1+M^p)\alpha^p\right)
$$
is independent of $\delta$.
It is readily seen that
\begin{equation}\label{gap_cond1}
\calM(m,p,M,\alpha,\delta)<\frac{1}{m-p}
\Leftrightarrow C(1+M^p) \alpha^p<\frac{p}{m-p}\alpha.
\end{equation}
Since $p > 1$ it is most obvious that \eqref{gap_cond1} holds provided $0 < \alpha \leq \alpha_0(m,p,M)$ is small enough.
Next we consider the case $2<p\le n$. In this case, we have
$$
 \calM(m,p,M,\alpha,\delta)=\frac{1}{m-p+p\alpha}\left((1+\delta)+C\delta^{-(\frac{p}2-1)}(1+M^p)\alpha^p\right).
$$
Again it is easy to see that
\begin{equation}\label{gap_cond2}
 \calM(m,p,M,\alpha,\delta)<\frac{1}{m-p}
\Leftrightarrow \delta+C\delta^{-(\frac{p}2-1)}(1+M^p)\alpha^p<\frac{p}{m-p}\alpha.
\end{equation}
Letting $\delta = \alpha^2$ the left member of \eqref{gap_cond2} becomes a constant multiple of $\alpha^2$. Thus the inequality is verified provided $0 < \alpha \leq \alpha_0(m,p,M)$ is small enough.
Combining together both cases, we see that there exists $\eta_0>0$ depending only on $p, m, Q, M$
such that (\ref{p-inf1}) holds.  To show (\ref{p-min1}), first observe that the $(\omega,p)$-Dir-minimality of $u$ and (\ref{p-inf1})
imply
\setlength\arraycolsep{1.4pt}
\begin{eqnarray*}
\calE_p(u, B) &\leq &  (1+\omega(1)) \calC(p, Q, u_{|\partial B}) \\
 & \leq & (1+\varepsilon_0)\big(\frac{1}{m-p}-2\eta_0\big)\\
&\leq & \frac{1}{m-p}-\eta_0+\big(\frac{\varepsilon_0}{m-p}-\eta_0\big) \\
& \le & \frac{1}{m-p}-\eta_0,
\end{eqnarray*}
provided $\varepsilon_0\le (m-p)\eta_0$.  Hence the proof is complete.  \end{proof}

An immediate consequence of the radial comparison lemma is the H\"older continuity of $(\omega,p)$-Dir-minimizing functions to $\ell_2$, which
implies Theorem \ref{holder_reg} holds for the case $Q=1$. More precisely, we have

\begin{Corollary} \label{holder_reg0} There exists $\eta_0>0$ depending only on $m, p$ such that
for any $u\in W^{1}_{p}(\partial B(0,r), \ell_2)$, one has
\begin{equation}\label{p-inf0}
\calC(p, 1, u_{|\partial B(0,r)})\le \big(\frac{1}{m-p}-2\eta_0\big) r\calE_p(u, \partial B(0,r)).
\end{equation}
\end{Corollary}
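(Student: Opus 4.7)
The plan is to reduce Corollary \ref{holder_reg0} directly to Lemma \ref{p-min0}, via two observations: first, the diameter hypothesis \eqref{diameter} becomes vacuous when $Q=1$, because the mean $\overline u \in \ell_2 = \calQ_1(\ell_2)$ is a single point and hence $\rmdiam(\rmsupp \overline u)=0$; and second, the factor $r$ on the right-hand side of \eqref{p-inf0} is produced by a simple change of variables that rescales $\partial B(0,r)$ onto the unit sphere.

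First I would fix any $M>0$ (for concreteness $M=1$) and apply Lemma \ref{p-min0} on the unit ball with $Q=1$ to obtain a constant $\eta_0=\eta_0(p,m,1,1)$, which then depends only on $p$ and $m$. Because every $u\in W^{1}_{p}(\partial B,\ell_2)$ automatically satisfies \eqref{diameter}, the lemma's conclusion $\calC(p,1,u_{|\partial B})\le \bigl(\tfrac{1}{m-p}-2\eta_0\bigr)\calE_p(u,\partial B)$ holds unconditionally.

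Next I would transfer this inequality to an arbitrary radius $r>0$ by introducing $\tilde u(y):=u(ry)$ on $\overline{B(0,1)}$. A routine change of variables yields
\begin{equation*}
\calE_p(\tilde u,\partial B) = r^{\,p-m+1}\,\calE_p(u,\partial B(0,r)), \qquad \calE_p(\tilde v,B)=r^{\,p-m}\,\calE_p(v,B(0,r))
\end{equation*}
for any admissible competitor pair $v \leftrightarrow \tilde v(y)=v(ry)$. Since $v \mapsto \tilde v$ is a bijection between competitors for $u_{|\partial B(0,r)}$ and for $\tilde u_{|\partial B}$, taking infima gives $\calC(p,1,\tilde u_{|\partial B}) = r^{\,p-m}\calC(p,1,u_{|\partial B(0,r)})$. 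Inserting these scaling relations into the unit-ball estimate from the previous step and dividing through by $r^{\,p-m}$ produces exactly \eqref{p-inf0}.

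There is no genuine obstacle here: the corollary is a formal consequence of the preceding lemma. The only point worth verifying carefully is that the diameter hypothesis truly trivializes in the $Q=1$ case, which is what allows one to bypass the barycenter-translation argument that was needed in Lemma \ref{p-min0} to handle the general $Q$-valued setting.
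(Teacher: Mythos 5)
Your proof is correct and follows essentially the same route as the paper: reduce to $r=1$ by the scaling relations you wrote down, then invoke Lemma \ref{p-min0} with $Q=1$, where the diameter hypothesis \eqref{diameter} is vacuous since $\rmsupp\overline{u}$ is a single point. The only cosmetic difference is that you fix $M=1$ and use the lemma as a black box, whereas the paper formally sets $M=0$ and re-reads the constant $\calM(m,p,\alpha,\delta)$; both yield an $\eta_0$ depending only on $m,p$.
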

\begin{proof} By scaling, it suffices to show (\ref{p-inf0}) for $r=1$.  This follows from Lemma \ref{p-min0}. In fact, for $Q=1$, one has
that the diameter of $\rmsupp \overline{u}$ equals to $0$, \emph{i.e.} $M=0$ in the condition (\ref{diameter}). Hence $\calM(m, p, M, \alpha,\delta)
=\calM(m, p, \alpha,\delta)$, given by (\ref{m-estimate}). We define $\alpha_0, \delta_0, \eta_0$ as in \eqref{gap2} when $M$ is replaced by 0. Clearly, $\alpha_0$ and $\delta_0$ depend only on $p, m$ and
$$\calM(m, p, \alpha_0,\delta_0)\le \frac{1}{m-p}-2\eta_0.$$
This immediately implies (\ref{p-inf0}).
\end{proof}

\begin{Corollary} \label{holder_reg1} For any given modular function $\omega$, there exists $\delta=\delta(p,\omega,m)\in (0,1)$ such that
any $(\omega,p)$-Dir-minimizing function $u\in W^{1}_{p}(U, \ell_2)$ is H\"older continuous in $U$
with an exponent $\delta$. Moreover, for any ball $B(x,2r)\subset U$,
we have
\begin{equation}
\|u\|_{C^\delta(B(x,r))}\le C r^{p-m}\calE_p(u, B(x,2r)). \label{holder-est1}
\end{equation}
\end{Corollary}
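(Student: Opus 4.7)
\emph{Plan.} The proof runs the classical Morrey--Campanato energy decay argument, driven by the radial comparison (\ref{p-inf0}) of Corollary \ref{holder_reg0}. Fix $x_0 \in U$ and $R > 0$ with $\overline{B(x_0,R)} \subset U$. For each $y \in B(x_0, R/2)$ set $\Phi_y(s) := \int_{B(y,s)} \lno Du \rno^p$. By the coarea formula, for a.e.\ $s \in (0, R/2)$ the function $\Phi_y$ is differentiable at $s$ with $\Phi_y'(s) = \int_{\partial B(y,s)} \lno Du \rno^p\, d\calH^{m-1}$, the trace $u_{|\partial B(y,s)}$ lies in $W^1_p(\partial B(y,s), \ell_2)$, and its tangential $p$-energy is bounded by $\Phi_y'(s)$.

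For such $s$, let $v \in W^1_p(B(y,s), \ell_2)$ be a near-infimizer in the definition of $\calC(p, 1, u_{|\partial B(y,s)})$. By Corollary \ref{holder_reg0},
$$\int_{B(y,s)} \lno Dv \rno^p \leq \Big(\tfrac{1}{m-p} - 2\eta_0\Big)\, s\, \Phi_y'(s),$$
and the $(\omega, p)$-Dir-minimizing property of $u$ on $B(y,s)$ yields
$$\Phi_y(s) \leq (1+\omega(s))^p \int_{B(y,s)} \lno Dv \rno^p.$$
Since $\omega(s) \to 0$ as $s \to 0^+$, fix $r_0 = r_0(m, p, \omega) \in (0, R/2)$ so small that $(1+\omega(r_0))^p(\tfrac{1}{m-p} - 2\eta_0) \leq \tfrac{1}{m-p} - \eta_0 =: \kappa$. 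Chaining the two bounds and using monotonicity of $\omega$, we obtain the ODE inequality
$$\Phi_y(s) \leq \kappa\, s\, \Phi_y'(s), \qquad 0 < s \leq r_0,$$
equivalently $\tfrac{d}{ds} \log \Phi_y(s) \geq \tfrac{1}{\kappa s}$. Integrating from $s$ to $r_0$ gives $\Phi_y(s) \leq \Phi_y(r_0)(s/r_0)^{1/\kappa}$; since $1/\kappa > m - p$ we set $\delta := (1/\kappa - (m-p))/p \in (0,1)$ and obtain the Morrey growth $\Phi_y(s) \leq C\, s^{m - p + p \delta}$, uniformly in $y \in B(x_0, R/2)$.

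The Poincar\'e inequality for $\ell_2$-valued Sobolev maps (which reduces to the scalar version applied coordinatewise in $\ell_2$) then converts this into the Campanato estimate
$$\int_{B(y,s)} \| u - \bar u_{y,s} \|^p \leq C\, s^p\, \Phi_y(s) \leq C\, s^{m + p \delta}, \qquad \bar u_{y,s} := \frac{1}{|B(y,s)|} \int_{B(y,s)} u,$$
and the standard iteration---comparing the averages $\bar u_{y, 2^{-k} s}$ at geometrically decreasing radii, and comparing averages at nearby centers---produces a representative of $u$ that is H\"older continuous of exponent $\delta$ on $B(x_0, R/2)$, proving interior H\"older regularity throughout $U$. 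The quantitative bound (\ref{holder-est1}) is obtained by running the same analysis with $y$ ranging over $B(x, r)$ and $r_0$ effectively replaced by $r$, using the trivial estimate $\Phi_y(r) \leq \calE_p(u, B(x, 2r))^p$ and tracking the constants (together with a covering argument when $r$ exceeds the scale determined by $\omega$). The main technical delicacy is to secure, on a common set of full measure in $s$, both the absolute continuity of $\Phi_y$ and the slicing property needed to invoke Corollary \ref{holder_reg0}; once this is in place, every subsequent step is classical.
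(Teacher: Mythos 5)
Your proposal is correct and follows essentially the same route as the paper: the radial comparison bound of Corollary \ref{holder_reg0} combined with the $(\omega,p)$-minimality yields, below a scale $r_0$ determined by $\omega$, the differential inequality $\calE_p(u,B(y,s))\le \big(\tfrac{1}{m-p}-\eta_0\big)\, s\,\calE_p(u,\partial B(y,s))$, which integrates to a Morrey-type power decay of the energy and then gives H\"older continuity via Poincar\'e/Campanato (the paper simply cites the Morrey decay lemma for $\ell_2$-valued maps where you spell out the iteration). Your extra care with the a.e.\ slicing in $s$ and the $(1+\omega)^p$ factor are harmless refinements of the same argument.
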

\begin{proof}
Since $u\in W^{1}_{p}(U, \ell_2)$ is $(\omega,p)$-Dir-minimizing in $U$,  by (\ref{p-inf0}) we have that for any ball $B(x,r)\subset U$,
\setlength\arraycolsep{1.4pt}
\begin{eqnarray}\label{p-min13}
\calE_p(u, B(x,r))&\le& (1+\omega(r))\calC(p,1, u_{|\partial B(x,r)})\nonumber\\
&\le& (1+\omega(r))\big(\frac{1}{m-p}-2\eta_0\big) r\calE_p(u, \partial B(x,r)).
\end{eqnarray}
Since $\lim_{r\downarrow 0}\omega(r)=0$, there exists $r_0>0$ such that
$$\forall r \in (0, r_0], \quad (1+\omega(r))\big(\frac{1}{m-p}-2\eta_0\big)\le \frac{1}{m-p}-\eta_0.$$
Thus we have that
\begin{equation}\label{p-min12prim}
\calE_p(u, B(x,r))\le \big(\frac{1}{m-p}-\eta_0\big) r \calE_p(u, \partial B(x,r))
\end{equation}
holds for all $B(x,r)\subset U$ with $0<r\le r_0$. It is standard that integrating (\ref{p-min12prim}) over $r$ yields
that
\begin{equation}\label{p-decay}
\frac{1}{r^{m-p+\eta_0}}\calE_p(u, B(x,r))\le \frac{1}{r_0^{m-p+\eta_0}}\calE_p(u, B(x,r_0)),
\end{equation}
for all balls $B(x,r_0) \subset U$ and $r \in (0,r_0]$. This, combined with the Morrey decay lemma for $\ell_2$-valued functions, implies that
$u\in C^{\eta_0/p}(U)$ and
$$\|u\|_{C^{\eta_0/p}(B(x,r)) } \le C\frac{1}{r_0^{m-p+\eta_0}}\calE_p(u, B(x,r_0))$$
holds for $B(x, r_0)\subset U$ and $0<r\le r_0$. This completes the proof.
\end{proof}

\section{Interpolation}

In this section, we will establish an interpolation lemma. Such an interpolation property has been established
in $\calQ_Q(\ell_2^n)$ by F. Almgren \cite{Almgren}. However, the original proof by \cite{Almgren} is of extrinsic nature, \emph{i.e.} it
depends on the existence of a Lipschitz embedding of $\calQ_Q(\ell_2^n)$ into $\ell_2^N$ for some large positive integer $N=N(m, n, Q)$.
There seems to be no useful ersatz of this embedding in the case of $\QQl$.

\begin{Theorem} \label{ext}
For any $1<p\le m$ and $\varepsilon>0$, there exists $C=C(m, p, Q)>0$ such that if $g_1,g_2\in W^{1}_{p}(\partial B, \QQl)$, then there
exists $h\in W^{1}_{p}(B\setminus B(0,1-\varepsilon), \QQl)$ such that
\begin{equation}
\forall x \in \partial B, \quad h(x)=g_1(x), \qquad \forall x \in \partial B(0,1-\varepsilon), \quad h(x)=g_2\big(\frac{x}{1-\varepsilon}\big),
\label{ext1}
\end{equation}
and
\begin{equation}
\calE_p(h, B\setminus B(0,1-\varepsilon))
\le C\big( \varepsilon \sum_{i=1}^2\calE_p(g_i, \partial B)
+\varepsilon^{1-p}\int_{\partial B} \calG^p(g_1, g_2)\,d\calH^{m-1}\big). \label{hybrid_ineq1}
\end{equation}
\end{Theorem}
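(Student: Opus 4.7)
The plan is to adapt the classical Luckhaus interpolation to the intrinsic metric geometry of $\calQ_Q(\ell_2)$, since the infinite-dimensional target precludes Almgren's original extrinsic approach based on a bi-Lipschitz embedding of $\calQ_Q(\ell_2^n)$ into some $\ell_2^N$. Using polar coordinates I identify the annulus $B \setminus \overline{B}(0, 1-\varepsilon)$ with $\partial B \times [0, \varepsilon]$ via $(y, t) \mapsto (1-t) y$, so that the task becomes constructing $h(y, t)$ with $h(\cdot, 0) = g_1$ and $h(\cdot, \varepsilon) = g_2$.

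First I select a good grid $\calN$. Using bi-Lipschitz local charts I partition $\partial B$ into $(m-1)$-dimensional cubes of side length comparable to $\varepsilon$, and by averaging over a one-parameter family of translations of the grid (Fubini), I obtain $\calN$ whose $(m-2)$-skeleton $\calN^{m-2}$ satisfies
\begin{equation*}
\sum_{i=1}^2 \int_{\calN^{m-2}} \lno Dg_i \rno^p \, d\calH^{m-2} + \int_{\calN^{m-2}} \calG(g_1, g_2)^p \, d\calH^{m-2} \leq \frac{C}{\varepsilon}\Bigl(\sum_{i=1}^2 \calE_p(g_i, \partial B)^p + \int_{\partial B} \calG(g_1, g_2)^p \, d\calH^{m-1}\Bigr).
\end{equation*}
On the boundary of each prism $C \times [0, \varepsilon]$, $C \in \calN$, I define $h$ as follows: on the two horizontal faces $C \times \{0\}$ and $C \times \{\varepsilon\}$ take $h$ to be $g_1$ and $g_2$; on the vertical part $\calN^{m-2} \times [0, \varepsilon]$ measurably select an optimal matching $\sigma(y) \in S_Q$ realizing $\calG(g_1(y), g_2(y))$ and define the geodesic interpolation
\begin{equation*}
h(y, t) = \bigoplus_{i=1}^Q \lseg (1 - t/\varepsilon) g_{1,i}(y) + (t/\varepsilon) g_{2, \sigma(y)(i)}(y) \rseg.
\end{equation*}
I then extend $h$ to the interior of each prism by a cone construction from its center, in the spirit of the radial argument of Lemma \ref{p-min0}. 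Summing prism by prism and invoking the grid estimate delivers the bound (\ref{hybrid_ineq1}).

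The main obstacle is the geodesic interpolation step. Measurable selection of the optimal matching $\sigma(y)$ is possible (the graph of optimal matchings is a measurable subset of $\partial B \times S_Q$), but I must then verify that the resulting boundary map lies in $W^{1,p}$ of the lower-dimensional skeleton with the pointwise bound
\begin{equation*}
\lno Dh(y,t) \rno^p \leq C\bigl( \lno Dg_1(y) \rno^p + \lno Dg_2(y) \rno^p + \varepsilon^{-p} \calG(g_1(y), g_2(y))^p \bigr).
\end{equation*}
Equally delicate is the intrinsic cone extension in $\calQ_Q(\ell_2)$: without the crutch of a bi-Lipschitz embedding into Euclidean space I cannot identify cones with scalar multiples of boundary values, so I must realize each cone via an explicit geodesic parametrization from a chosen base point and control its $p$-energy by a direct computation patterned on the radial calculation (\ref{p-energy-v}) of Lemma \ref{p-min0}. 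The factor $\varepsilon^{1-p}$ in the final bound comes entirely from the energy of the radial geodesics on the vertical skeleton, while the factor $\varepsilon$ reflects the thickness of the annulus absorbed in the cone integrals.
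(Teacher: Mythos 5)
Your overall architecture (a good grid chosen by a Fubini averaging, interpolation on a vertical skeleton, filling by cones) is in the spirit of the paper's Luckhaus-type argument, but the step you yourself flag as ``the main obstacle'' is a genuine gap, and it is precisely the difficulty the paper's construction is designed to avoid. A measurable selection of an optimal permutation $\sigma(y)$ cannot in general be chosen continuously: at ties it must jump. Take $Q=2$, $g_1(y)=\lseg a\rseg\oplus\lseg -a\rseg$, $g_2(y)=\lseg b\rseg\oplus\lseg -b\rseg$ with $a\perp b$, $a,b\neq 0$; both permutations realize $\calG(g_1(y),g_2(y))$, while the two midpoints $\lseg (a+b)/2\rseg\oplus\lseg -(a+b)/2\rseg$ and $\lseg (a-b)/2\rseg\oplus\lseg (b-a)/2\rseg$ lie at distance $\sqrt{2}\min(\|a\|,\|b\|)>0$. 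Consequently, wherever the selected matching switches, your interpolant $h(\cdot,t)$, $0<t<\varepsilon$, acquires jump discontinuities along hypersurfaces inside $\calN^{m-2}\times[0,\varepsilon]$, even for smooth $g_1,g_2$. A map with such a jump is not in $W^{1}_{p}$ at all, so the pointwise bound $\lno Dh\rno^p\le C\big(\lno Dg_1\rno^p+\lno Dg_2\rno^p+\varepsilon^{-p}\calG(g_1,g_2)^p\big)$ is not something to ``verify''; it fails for this construction, and no almost-everywhere differential estimate can restore Sobolev membership. This is exactly why the paper never interpolates pointwise along optimal matchings: in Lemma \ref{1d-ext} it replaces $g_1,g_2$ by Lipschitz maps obtained from means over cubes attached to the vertices of the grid, fills cells and prisms using the intrinsic Lipschitz extension theorem for $\calQ_Q(\ell_2)$ (Theorem 2.4.3 of \cite{BDPG}) with controlled Lipschitz constants, and recovers $h$, its energy bound and its traces in the limit via the compactness theorem (Theorem 4.8.2 of \cite{BDPG}) and trace continuity (Theorem 4.7.3); crucially, this Lipschitz-approximation argument is run only on skeleta of dimension at most $m_p<p$, where Sobolev embedding makes the data continuous, while cells of dimension above $m_p$ are filled by degree-zero homogeneous extension, which has finite $p$-energy precisely because their dimension exceeds $p$.

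Your proposal also drops this dimension dichotomy: you cone directly into the $m$-dimensional prisms $C\times[0,\varepsilon]$ from their centers, which costs a factor $\int_0^{\varepsilon}r^{m-1-p}\,dr$ and therefore diverges at the endpoint $p=m$ allowed by the statement (the paper handles $p=m$ because then every cell of $\partial B$, up to dimension $m-1=m_p$, goes through the interpolation lemma and no top-dimensional cone is needed); moreover the cone requires the data on the entire prism boundary, including your vertical geodesic interpolation, to be of class $W^{1}_{p}$, which brings you back to the gap above.
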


For $1<p<+\infty$, set
\begin{equation} \label{dim}
m_p=\begin{cases} p-1 & \ {\rm{if}}\ p\in \mathbb Z_+\\
\lfloor p\rfloor & \ {\rm{if}}\ p\notin \mathbb Z_+,
\end{cases}
\end{equation}
where $\lfloor p\rfloor$ denotes the integer part of $p$.

Here we provide an intrinsic proof of Theorem \ref{ext}, analogous to that by S. Luckhaus \cite{Lu}. The rough idea  is first to find
a suitable triangulation of $\partial B_1$ and then do interpolations up to $m_p$-dimensional skeletons by first suitably
approximating $g_1, g_2$ by Lipschitz maps  and  perform suitable
Lipschitz extensions from $0$-dimensional skeletons for all $m_p$-dimensional skeletons,
here we need an important compactness theorem similar to Kolmogorov's theorem in our context.
Finally we perform homogenous of degree zero extensions in skeletons of dimensions higher than $m_p$.
We denote the unit interval by $I := [-1,1]$.

For this, we need to establish the following lemma.
\begin{Lemma} \label{1d-ext} For any $1<p<\infty$ and $\varepsilon>0$, assume $m\le m_p$. There exists a constant $C=C(p,Q)>0$ such that if $g_1,g_2\in W^{1}_{p}(I^m, \QQl)$, then there is a map
$h\in W^{1}_{p}(I^m\times [-\varepsilon,\varepsilon], \QQl)$ such that
\begin{equation}
\begin{cases}
h(x, \ \varepsilon)=g_1(x) & x\in I^m,\\
h(x, -\varepsilon)=g_2(x) & x\in I^m, \label{ext2}
\end{cases}
\end{equation}
and
\begin{equation}
\calE_p(h, I^m \times [-\varepsilon, \varepsilon])
\le C\big( \varepsilon \sum_{i=1}^2\calE_p(g_i, I^m)
+\varepsilon^{1-p}\int_{I^m} \calG^p(g_1, g_2)\,d\calH^{m}\big). \label{hybrid_ineq2}
\end{equation}
\end{Lemma}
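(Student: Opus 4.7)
My plan is to define $h$ by an explicit pointwise straight--line interpolation between $g_2$ and $g_1$, exploiting that the assumption $m\leq m_p$ (which forces $p>m$, or $p=m$ integer) and Morrey's embedding theorem make both $g_1,g_2\in W^1_p(I^m,\QQl)$ Hölder continuous, so they admit pointwise evaluation. For each $x\in I^m$ I would select, via a standard measurable selection theorem applied to the finitely set-valued map
\[
x\mapsto\Big\{\sigma\in S_Q:\sum_{i=1}^Q\|g_{1,i}(x)-g_{2,\sigma(i)}(x)\|^2=\calG(g_1(x),g_2(x))^2\Big\},
\]
a measurable choice of optimal permutation $\sigma_x\in S_Q$ (just lex-order $S_Q$), and then set
\[
h(x,t):=\oplus_{i=1}^Q \lseg s(t)\,g_{1,i}(x)+(1-s(t))\,g_{2,\sigma_x(i)}(x) \rseg,\qquad s(t):=\tfrac{t+\varepsilon}{2\varepsilon}.
\]
The boundary conditions $h(\cdot,\varepsilon)=g_1$ and $h(\cdot,-\varepsilon)=g_2$ hold automatically, so \eqref{ext2} is taken care of.

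Next, I would split the $p$-energy into vertical and horizontal components. Vertically, at each fixed $x$ the curve $t\mapsto h(x,t)$ is a constant-speed affine path in $\QQl$ from $g_2(x)$ to $g_1(x)$, so $|\partial_t h(x,t)|=\calG(g_1(x),g_2(x))/(2\varepsilon)$, and integrating first in $t\in[-\varepsilon,\varepsilon]$ and then in $x\in I^m$ yields $2^{1-p}\varepsilon^{1-p}\int_{I^m}\calG^p(g_1,g_2)\,d\calH^m$, which is the second term in \eqref{hybrid_ineq2}. Horizontally, at a.e.\ point $(x,t)$ where $g_1,g_2$ are approximately differentiable and $\sigma_{(\cdot)}$ is locally constant at $x$, a direct application of the chain rule yields $\lno D_x h(x,t)\rno\leq \lno Dg_1(x)\rno+\lno Dg_2(x)\rno$ with constant independent of $t$; integrating this over the slab of thickness $2\varepsilon$ produces the first term.

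The main obstacle is controlling $D_x h$ rigorously despite the fact that $x\mapsto\sigma_x$ is in general only measurable and can genuinely fail to be locally constant on a set of positive measure. The key saving observation is that discontinuities of $\sigma_x$ occur precisely where two distinct matchings realize the same infimum, and at such points every candidate value for $h(x,t)$ projects to the same element of the quotient $\QQl$---so $h$ itself remains continuous across jumps of $\sigma_x$, which is only a visible discontinuity on the level of lifts to $(\ell_2)^Q$. To turn this heuristic into an actual $W^1_p$ estimate, I would either invoke the approximate differentiation theory for $Q$-valued maps of \cite{BDPG} directly, or first establish \eqref{hybrid_ineq2} for Lipschitz data $g_1,g_2$---where a covering argument combined with the discreteness of $S_Q$ reduces matters to neighborhoods on which $\sigma_x$ is locally constant and the explicit differentiation formula is valid---and then pass to general $W^1_p$ data by density together with the lower semicontinuity of the $p$-energy with respect to $L^p$ convergence of the boundary data. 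This density step is precisely where the compactness input alluded to in the preamble enters.
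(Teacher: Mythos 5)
There is a genuine gap, and it sits exactly at what you call the ``key saving observation''. It is not true that when two distinct permutations realize $\calG(g_1(x),g_2(x))$ the corresponding interpolated points coincide in the quotient. Take $Q=2$ and the points $P=\lseg(1,0)\rseg\oplus\lseg(-1,0)\rseg$, $P'=\lseg(0,1)\rseg\oplus\lseg(0,-1)\rseg$ in $\calQ_2(\R^2)\subset\calQ_2(\ell_2)$: both permutations are optimal, yet the two midpoints $\lseg(\tfrac12,\tfrac12)\rseg\oplus\lseg(-\tfrac12,-\tfrac12)\rseg$ and $\lseg(\tfrac12,-\tfrac12)\rseg\oplus\lseg(-\tfrac12,\tfrac12)\rseg$ are distinct elements of $\calQ_2(\ell_2)$, at $\calG$-distance $\sqrt2$. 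Consequently, if $g_1\equiv P$ and $g_2(x)$ is a slow Lipschitz rotation of $P'$ whose angle changes sign across the hyperplane $\{x_1=0\}$, then on either side of that hyperplane the optimal permutation is unique and different, so \emph{any} measurable selection $\sigma_x$ produces an interpolant $h(\cdot,0)$ with a jump of size about $\sqrt2$ across a hypersurface, even though $g_1,g_2$ have arbitrarily small Lipschitz constant. Such an $h$ is not in $W^1_p$ (its derivative has a singular jump part), so both the membership claim and the energy bound \eqref{hybrid_ineq2} fail for the straight-line construction; reducing first to Lipschitz data and then passing to the limit does not repair this, since the defect is in the construction itself, not in the regularity of the data. (The vertical part of your computation is fine; it is the horizontal control that collapses.)

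This pointwise-matching obstruction is precisely why the paper does not interpolate directly. Its proof subdivides $I^m$ into cubes of side $2/k$, assigns to each vertex a local mean of $g_i$, extends these vertex values cell by cell to Lipschitz maps $\mathbf{h}^k_1,\mathbf{h}^k_2$ via the Lipschitz extension theorem of \cite{BDPG} (so the combinatorial matching is resolved once per cell, with Lipschitz constants controlled by $k^{m/p}\calE_p(g_i,\cdot)^{1/p}$ through the Poincar\'e inequality), then interpolates between $\mathbf{h}^k_1$ and $\mathbf{h}^k_2$ across the slab with Lipschitz constant of order $\mathrm{Lip}+\varepsilon^{-1}\calG(\text{vertex values})$, and finally lets $k\to\infty$ using the Kolmogorov-type compactness theorem of \cite{BDPG} together with lower semicontinuity of $\calE_p$ and convergence of traces. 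This is also where the hypothesis $m\le m_p$ (i.e.\ $p>m$) is really used: the H\"older embedding gives a compact target region so that the compactness theorem applies. If you wish to keep a direct interpolation you would need a selection of matchings that is locally constant away from a set invisible to the interpolant, and the example above shows no such selection exists in general for $Q\ge2$.
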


\begin{proof}  We adapt some notations from \cite{DS} page 62. Let us introduce $I_k := [-1-\frac1{k},1+\frac1{k}]$. For sufficiently large $k\in \mathbb Z_+$, decompose
$I_k^m$ into the union of $(k+1)^m$ cubes $\{C_{k,l}\}$, $1\le l\le (k+1)^m$,
with disjoint interiors, side length equal to $2/k$ and faces parallel to the coordinate hyperplane. Let $x_{k,l}$ denote their
centers so that
$$C_{k,l}=x_{k,l}+\big[-\frac{1}{k}, \frac1{k}\big]^m, \qquad 1\le l\le (k+1)^m.$$

We also decompose $I^m$ into the union of $k^m$ cubes $\{D_{k,l}\}$, $1\le l\le k^m$ and of side length $2/k$. Note that the centers of cubes in the collection $\{C_{k,l} : 1 \leq l \leq (k+1)^m\}$ are precisely the vertices of cubes in the collection $\{D_{k,l} : 1 \leq l \leq k^m\}$. Now we define two functions on the set of vertices
$h_1^k, h_2^k: \{x_{k,1},\dots, x_{k, (k+1)^m}\} \to \QQl$ by letting
$$h_i^k(x_{k, l})={\rm{a\ mean\ of }}\  g_i \ {\rm{on}}\ \big(\{x_{k,l}\}+\big[-\frac{2}{k}, \frac{2}{k}\big]^m\big)\cap I^m,
\ 1\le l\le (k+1)^m,$$
for $i=1,2$. Now we want to extend both $h_1^k$ and $h^k_2$ from the set of vertices to the cube $I^m$. For each cube
$D_{k,l}$, we let $V_{k,l}$ denote the set of vertices of $D_{k,l}$, consisting of $2^m$ points extracted from $\{x_{k,l'}\}_{1\le l'\le (k+1)^m}$,
and let $F_{k,l}^j$ denote the set of all faces of $D_{k,l}$ of dimension $j$ for $1\le j\le m-1$.
On the first cube
$D_{k,1}$ we claim that there exist Lipschitz functions
$h_1^{k,1}, h^{k,1}_2: D_{k,1}\to \QQl$ that are extensions of $h_{1|V_{k,1}}^k, h^k_{2|V_{k,1}}: V_{k,1}\to \QQl$, respectively,
such that for $i=1, 2$,
\begin{equation}
{\rm{Lip}}\big(h^{k,1}_i, F)\le C{\rm{Lip}}\big(h^{k}_i,  V_{k,1}\cap F),
\ \forall F\in F_{k,1}^j, 1\le j\le m. \label{lip_bound1}
\end{equation}
In particular, for $j=m$, (\ref{lip_bound1}) yields
\begin{equation}
 {\rm{Lip}}\big(h^{k,1}_i, D_{k,1}\big)\le C {\rm{Lip}}\big(h_i^k, V_{k,1}\big). \label{lip_bound2}
\end{equation}
Indeed, we apply finitely many times Theorem 2.4.3 of \cite{BDPG}: first extend the maps $h^k_{i|V_{k,1}}$ to \emph{each} edge in $F^1_{k,1}$ (thus apply Theorem 2.4.3 $\mathrm{Card}\, F^1_{k,1}$ times), then to \emph{each} $j$-dimensional face in $F_{k,1}^j$, for $j = 2, \dots, m$, by induction on $j$.

On all those cubes $D_{k,l}$ that are adjacent to $D_{k,1}$ (i.e., share a common ($m-1$)-dimensional face $\partial D_{k,l}\cap \partial D_{k,1}$
with $D_{k,1})$, we proceed similarly to find Lipschitz
functions $h_1^{k,l}, h_2^{k,l}: D_{k,l}\to \QQl$ that are Lipschitz extensions of $\widetilde{h_1^{k,l}}, \widetilde{h_2^{k,l}}:
(\partial D_{k,l}\cap \partial D_{k,1})\cup V_{k,l}\to \QQl$ respectively, where
$$\widetilde{h^{k,l}_i}(x) =
\left\{\begin{array}{ll}
h^{k,1}_i(x) & {\rm{if}}\ x \in \partial D_{k,l}\cap \partial D_{k,1} \\
h^k_i(x) & {\rm{if}} \ x \in V_{k,l}
\end{array}\right.$$
for $i=1,2$. Moreover, for $i=1, 2$ the following estimates hold:
\begin{equation}
{\rm{Lip}}(h^{k,l}_i, F)\le C{\rm{Lip}}(h^{k}_i,  V_{k,l}\cap F),
\ \forall F\in F_{k,l}^j, \ 1\le j\le m,  \label{lip_bound3}
\end{equation}
In particular, for $j=m$, (\ref{lip_bound3}) yields
\begin{equation}\label{lip_bound4}
{\rm{Lip}}\big(h^{k,l}_i, D_{k,l}) \leq C {\rm{Lip}}(h^k_i, V_{k,l}).
\end{equation}
Repeating the above procedure with all subcubes $D_{k,l}$ for $1\le l\le k^m$,
we will eventually obtain two Lipschitz functions ${\bf h}^k_1, {\bf h}^k_2: I^m \to \QQl$
such that
\begin{equation}\label{lip_bound5}
{\bf h}^k_1(x_{k,l})=h^k_1(x_{k,l});  \ {\bf h}^k_2(x_{k,l})=h^k_2(x_{k,l}), \ \forall 1\le l\le (k+1)^m,
\end{equation}
 and for $i=1, 2$,
\begin{equation}
{\rm{Lip}}({\bf h}^k_i, D_{k,l})\le C{\rm{Lip}}(h^k_i, V_{k,l}), \quad \forall 1\le l\le k^m.
\end{equation}
Now we want to find a Lipschitz map $h^k:I^m\times [-\varepsilon, \varepsilon]\to \QQl$ that is a suitable extension of
$$
(x,\varepsilon) \in I^m \times \{\varepsilon\} \mapsto {\bf h}_1^k(x), \quad (x, -\varepsilon) \in I^m \times \{-\varepsilon\} \mapsto {\bf h}_2^k(x).
$$ This can be done as follows.
For each cube $D_{k,l}$, $1\le l\le k^m$, we let $h^{k,l}: D_{k,l}\times [-\varepsilon, \varepsilon]\to \QQl$ be a Lipschitz extension
of
$$
(x,\varepsilon) \in D_{k,l} \times \{\varepsilon\} \mapsto {\bf h}_1^k(x), \quad (x, -\varepsilon) \in D_{k,l} \times \{-\varepsilon\} \mapsto {\bf h}_2^k(x)
$$
 such that
 \begin{itemize}
 \item
if two cubes $D_{k,l}$ and $D_{k,l'}$ ($l\neq l'$) share a common $(m-1)$-dimensional face $\partial D_{k,l}\cap \partial D_{k,l'}$, then $h^{k,l}$ and
$h^{k,l'}$ take the same value on the common $m$-dimensional face $(\partial D_{k,l}\cap \partial D_{k,l'}) \times [-\varepsilon, \varepsilon]$.
\item the following inequalities hold:
\setlength\arraycolsep{1.4pt}
\begin{eqnarray}
{\rm{Lip}}(h^{k,l}, \partial D_{k,l}\times [-\varepsilon, \varepsilon])
& \leq &
C\left({\rm{Lip}}({\bf h}_1^k, \partial D_{k,l})+{\rm{Lip}}({\bf h}_2^k, \partial D_{k,l})\right) \nonumber \\
& & +C\varepsilon^{-1}\sum_{x\in V_{k,l}}\calG({\bf h}_1^k(x), {\bf h}_2^k(x))\nonumber\\
& \leq &
C\left({\rm{Lip}}\big(h^k_1, V_{k,l})+{\rm{Lip}}(h^k_2, V_{k,l})\right) \nonumber \\
& & +C \varepsilon^{-1}\sum_{x\in V_{k,l} }
\calG(h_1^k(x), h_2^k(x)). \label{lip_bound7}
\end{eqnarray}
and
\setlength\arraycolsep{1.4pt}
\begin{eqnarray}
\label{lip_bound8}
{\rm{Lip}}(h^{k,l}, D_{k,l}\times [-\varepsilon,\varepsilon] ) & \le & C{\rm{Lip}}(h^{k,l}, \partial(D_{k,l}\times [-\varepsilon, \varepsilon]))\nonumber\\
&\le &
C\left({\rm{Lip}}({\bf h}_1^{k}, D_{k,l})+{\rm{Lip}}({\bf h}_2^{k}, D_{k,l})\right) \nonumber \\
& & +C {\rm{Lip}}(h^{k,l}, \partial D_{k,l}\times [-\varepsilon,\varepsilon])\nonumber\\
&\le &
C\left({\rm{Lip}}(h_1^{k}, V_{k,l})+{\rm{Lip}}(h_2^{k}, V_{k,l})\right) \nonumber \\
& & +C\varepsilon^{-1}
\sum_{x\in V_{k,l}}\calG(h_1^k(x), h_2^k(x)).
\end{eqnarray}
\end{itemize}
Finally we define $h^k:I^m\times [-\varepsilon, \varepsilon]\to \QQl$ by simply letting
$$h^k_{|D_{k,l}\times [-\varepsilon, \varepsilon]}=h^{k,l}, \quad \forall\ 1\le l\le k^m.$$
Obviously $h^k$ satisfies that $h^k(x,\varepsilon)={\bf h}^k_1(x)$ and
$h^k(x,-\varepsilon)={\bf h}^k_2(x)$ for $x \in I^m$.

We want to estimate the terms in the right hand side of (\ref{lip_bound8}).
It is easy to see that for any $1\le l\le k^m$,
$${\rm{Lip}}(h_1^k, V_{k,l}) \le C \max\left\{k \calG(h_1^k(x), h_1^k(x')) : \ x, x'\in V_{k,l} \ {\rm{are\ two\ adjacent\ vertices}}\right\}.$$
On the other hand, for two adjacent vertices $x, x'\in V_{k,l}$, by the definition of $h_1^k(x)$ and $h^k_1(x')$ and Poincar\'e's inequality we have
\setlength\arraycolsep{1.4pt}
\begin{eqnarray*}
&&\calG(h_1^k(x), h_1^k(x'))^p\\
& \le &
Ck^m\int_{((\{x\}+[-2k^{-1},2k^{-1}]^m)\cap (\{x'\}+[-2k^{-1},2k^{-1}]^m))\cap I^m}
 \calG(h_1^k(x),h_1^k(x'))^p\\
&\le &
Ck^m\int_{(\{x\}+[-2k^{-1}, 2k^{-1}]^m)\cap I^m}\calG(g_1(y), h_1^k(x))^p dy \\
& & +Ck^m\int_{(\{x'\}+[-2k^{-1},2k^{-1}]^m)\cap I^m}\calG(g_1(y), h_1^k(x'))^p dy\\
& \leq &  Ck^{m-p} \calE_p(g_1, (\{x\}+[-2k^{-1}, 2k^{-1}]^m)\cap I^m) \\
& & +Ck^{m-p}\calE_p(g_1, (\{x'\}+[-2k^{-1}, 2k^{-1}]^m)\cap I^m)\\
& \leq &  Ck^{m-p} \calE_p(g_1, \widetilde{D_{k,l}}\cap I^m),
\end{eqnarray*}
where $\widetilde{D_{k,l}}$ denotes cube:
$$\widetilde{D_{k,l}}=\{x_{k,l}\}+\big[-\frac3{k}, \frac3{k}\big]^m.$$
Thus we have
\begin{equation}\label{lip_bound9}
{\rm{Lip}}^p(h_1^k, V_{k,l})\le Ck^m \calE_p(g_1, \widetilde{D_{k,l}}\cap I^m).
\end{equation}
Similarly,  we have
\begin{equation}\label{lip_bound10}
{\rm{Lip}}^p(h_2^k, V_{k,l})\le Ck^m \calE_p(g_2, \widetilde{D_{k,l}}\cap I^m).
\end{equation}
While for $x\in V_{k,l}$, we have
\setlength\arraycolsep{1.4pt}
\begin{eqnarray}
\calG(h_1^k(x), h^k_2(x))^p & \leq & C\max_{y\in D_{k,l}} \left(\calG({\bf h}^k_1(y), h_1^{k}(x))^p +\calG({\bf h}^k_2(y), h_2^{k}(x))^p \right) \nonumber \\
& & +Ck^m\int_{D_{k,l}}\calG({\bf h}_1^{k}(y), {\bf h}_2^{k}(y))^pdy\nonumber\\
&\le& C\left(k^{-p}{\rm{Lip}}^p({\bf h}_1^{k}, D_{k,l})+k^{-p}{\rm{Lip}}^p({\bf h}_2^{k}, D_{k,l})\right] \nonumber \\
& & +Ck^m\int_{D_{k,l}}\calG({\bf h}_1^{k}(y), {\bf h}_2^{k}(y))^pdy\nonumber\\
&\leq & Ck^{m-p}\big(\calE_p(g_1, \widetilde{D_{k,l}}\cap I^m)
+\calE_p(g_2, \widetilde{D_{k,l}}\cap I^m) \big) \nonumber \\
& & +Ck^m \int_{D_{k,l}}\calG({\bf h}_1^{k}(y), {\bf h}_2^{k}(y))^pdy.
\label{lip_bound11}
\end{eqnarray}
With all these estimates, we can bound $\displaystyle\calE_p\big(h^k, I^m\times [-\varepsilon, \varepsilon]\big)$ as follows:
\setlength\arraycolsep{1.4pt}
\begin{eqnarray}\label{p-energy-est}
&&\calE_p(h^k, I^m\times [-\varepsilon, \varepsilon])\nonumber\\
 & = & \sum_{l=1}^{k^m}\calE_p(h^{k,l}, D_{k,l}\times [-\varepsilon, \varepsilon]) \nonumber\\
& \le & \frac{C\varepsilon}{k^m} \sum_{l=1}^{k^m} {\rm{Lip}}^p(h^{k,l}, D_{k,l}\times [-\varepsilon, \varepsilon])\nonumber\\
&\le &
C\varepsilon(1+\varepsilon^{-p}k^{-p}) \sum_{l=1}^{k^m}\big(\calE_p(g_1, \widetilde{D_{k,l}}\cap I^m)+\calE_p(g_2, \widetilde{D_{k,l}}\cap B)\big)\nonumber\\
& & +C\varepsilon^{1-p}\sum_{l=1}^{k^m}\int_{D_{k,l}}\calG({\bf h}_1^{k}(y), {\bf h}_2^{k}(y))^pdy\nonumber\\
&\leq &
C\varepsilon(1+(k\varepsilon)^{-p})\big(\calE_p(g_1, I^m)+\calE_p(g_2, I^m)\big) \nonumber \\
& & + C\varepsilon^{1-p}\int_{I^m}\calG({\bf h}_1^{k}(y),
{\bf h}_2^{k}(y))^pdy.
\end{eqnarray}
Observe that we have for $i=1,2$,
\setlength\arraycolsep{1.4pt}
\begin{eqnarray*}
\int_{I^m} \calG({\bf h}^k_i(y), g_i(y))^pdy & \le &
C\sum_{l=1}^{k^m} \int_{D_{k,l}} \left(\calG({\bf h}^k_i, h^k_i(x_{k,l}))^p + \calG(h^k_i(x_{k,l}), g_i)^p\right) \\
& \leq & Ck^{-p} \sum_{l=1}^{k^m} \calE_p(g_i, \widetilde{D_{k,l}} \cap I^m)  \\
& & + C \sum_{l=1}^{k^m} \int_{(\{x\} + [-2k^{-1}, 2k^{-1}]) \cap I^m} \calG(h^k_i(x_{k,l}), g_i)^p \\
& \leq & Ck^{-p} \sum_{l=1}^{k^m} \calE_p(g_i, \widetilde{D_{k,l}} \cap I^m)  \\
& \leq & Ck^{-p} \calE(g_i, I^m),
\end{eqnarray*}
which converges to 0 as $k$ goes to $\infty$. We would establish (\ref{hybrid_ineq2}) if we can show that there exists $h\in W^{1}_{p}(I^m\times [-\varepsilon,\varepsilon], \QQl)$
such that after passing to possible subsequences, $h^k\rightarrow h$ in $L^p(I^m\times [-\varepsilon, \varepsilon])$.

To see this, since $p>m$, it follows from Sobolev's embedding theorem that $g_i\in C^{1-m/p}(I^m, \QQl)$ for $i=1, 2$.
If we define
$$\calC_i=\Big\{y\in \ell_2\ : \ y\in {\rm{supp}}(g_i(x)) \ {\rm{for\ some}}\ x\in I^m\Big\}, \ i=1, 2,$$
then $\calC_i\subset \ell_2$ is a compact set for $l=1,2$. Hence
$$\calC:=\calC_1\cup \calC_2$$
is also a compact set in $\ell_2$. Let $\calD\subset \ell_2$ be the convex hull of $\calC\cup\{0\}$. Then both $\calD$ and $\calQ_Q(\calD)$  are compact sets.
By checking the proof of the Lipschitz
extension theorem, we can see that $h^k(I^m\times [-\varepsilon,\varepsilon])\subset \calQ_Q(\calD)$. Now we can apply \cite{BDPG}
Theorem 4.8.2 to conclude that there exists $h\in W^{1}_{p}(I^m\times [-\varepsilon, \varepsilon], \QQl)$ and integers $k_1 < k_2 < \cdots $ such that
\begin{equation}\label{p-conv}
\lim_{j\rightarrow\infty} \int_{I^m\times [-\varepsilon,\varepsilon]} \calG(h^{k_j}, h)^p=0.
\end{equation}
By the lower semicontinuity of $\calE_p$, we have
\begin{equation}
\calE_p(h, I^m \times [-\varepsilon, \varepsilon])\le\liminf_{j\rightarrow\infty} \calE_p(h^{k_j}, I^m\times [-\varepsilon, \varepsilon]).
\label{lsc}
\end{equation}
Since $h^k( \cdot, \varepsilon) ={\bf h}^{k}_1\rightarrow g_1$ in $L^p(I^m)$ and $h^k( \cdot, - \varepsilon)={\bf h}^{k}_2\rightarrow g_2$
in $L^p(I^m)$ as $k\rightarrow\infty$, it follows from \cite{BDPG} Theorem 4.7.3 that
$h( \cdot, \varepsilon)=g_1$ on $B\times \{\varepsilon\}$ and $h( \cdot, -\varepsilon) =g_2$ on $B\times\{-\varepsilon\}$ in the sense of traces.
Finally, by sending $k=k_j$ to $\infty$ in (\ref{p-energy-est}), we see that $h$ satisfies the inequality (\ref{hybrid_ineq2}).
The proof is now complete.
\end{proof}

Now we are ready to prove Theorem \ref{ext}.

\begin{proof}[Proof of Theorem \ref{ext}] The idea is motivated by Luckhaus \cite{Lu}.
We first decompose $\partial B$ into ``cells'' of diameter $\varepsilon$ as follows. Since $B$ is bilipschitz isomorphic to the open unit
cube its boundary can be decomposed into open cubes of side length less than $\varepsilon$ and dimension ranging between $0$ and $m-1$. This gives a partition
$$\partial B=\bigcup_{j=1}^{m-1} \bigcup_{i=1}^{k_j} e_i^j, \ e_i^j\cap e_{i'}^{j'}=\emptyset \ {\rm{if}}\ i\neq i' \ {\rm{or}}\ j\neq j',$$
and for each $e^j_i$ we have a bilipschitz isomorphism
$$\Phi_i^j: e_i^j \to B^j(0, \varepsilon) \ {\rm{with}}\
\|\nabla\Phi_i^j\|_{L^\infty}+\|\nabla(\Phi_i^j)^{-1}\|_{L^\infty}\le c(m).$$
Here $B_\varepsilon^j$ is the $j$-dimensional open ball centered at $0$ and of radius $\varepsilon$.

Denote by $Q_j:=\cup_{i=1}^{k_j}e_i^j$ the union of $j$ cells. We next use the fact that for any nonnegative measurable function $f$
$$\int_{SO(m)}\,d\sigma\int_{\sigma(Q_j)} f \,d\calH^j=\frac{\calH^j(Q_j)}{\calH^{m-1}(\partial B)}\int_{\partial B} fd\calH^{m-1},$$ where $d\sigma$ is the Haar measure on $SO(m)$. By Fubini's theorem, we can further choose a rotation $\sigma\in SO(m)$ such that for all $j$
\begin{equation} \label{fubini}
\calE_p(g_1, \sigma(Q_j))+\calE_p(g_2, \sigma(Q_j))+\int_{\sigma(Q_j)}\varepsilon^{-p}\calG^p(g_1, g_2)d\calH^j
\le c(m) {\bf K}^p \varepsilon^{j+1-m},
\end{equation}
where ${\bf K}>0$ is the constant defined by
$${\bf K}^p:=\calE_p(g_1, \partial B)+\calE_p\big(g_2, \partial B)+\int_{\partial B}\varepsilon^{-p}\calG(g_1, g_2)^p d\calH^{m-1}.$$
To avoid extra notation, assume $\sigma={\rm{id}}$. Now decompose $B\setminus B(0,{1-\varepsilon})$ into cells
$$\hat{e}_i^j=\left\{z\in \mathbb R^n\ : \ \frac{z}{\|z\|}\in e_i^j, \ 1-\varepsilon<\|z\|<1 \right\}, \ 0\le j\le m-1, \ 1 \leq i\le k_j.$$
For $j\le m_p$, we use the fact that $\hat{e}_i^j$ is bilipschitz isomorphic to $B^j(0, \varepsilon) \times [-\varepsilon, \varepsilon]$, \emph{i.e.} there exist
$$\Psi_i^j: {\hat e}_i^j\to B^j(0,\varepsilon)\times [-\varepsilon, \varepsilon] \ {\rm{with}}\
\|\nabla\Psi_i^j\|_{L^\infty}+\|\nabla(\Psi_i^j)^{-1}\|_{L^\infty}\le c(m).
$$
We can apply Lemma \ref{1d-ext} to find an extension map $h_i^j\in W^{1}_{p}\big({\hat e}_i^j,  \QQl\big)$ such that
\begin{equation}\label{trace1}
h_i^j(x,1)=g_1(x), \ h_i^j\big((1-\varepsilon)x\big)=g_2(x), \ \forall x\in e_i^j,
\end{equation}
and
\begin{equation}\label{p-energy-est3}
\calE_p(h_i^j, {\hat e}_i^j)\le C\varepsilon\big(\calE_p(g_1, e_i^j)+\calE_p\big(g_2, e_i^j)+\int_{e_i^j}\varepsilon^{-p}\calG^p(g_1(x), g_2(x))d\calH^j\big).
\end{equation}
Moreover, we can see from the proof of Lemma \ref{1d-ext} that if for $1\le i<i'\le k_j$, the two cells ${\hat e}_i^j$ and ${\hat e}_{i'}^j$  share a common $j$-face, then
one can ensure from the construction of extensions that $h_i^j=h_{i'}^j$ on $\partial{\hat e}_i^j\cap\partial{\hat e}_{i'}^j$ in the sense of traces.
Denote $\hat{Q}^j=\cup_{i=1}^{k_j} \hat{e}_i^j$.
Then we can glue all $h_i^j$ together by letting $ h^j_{|\hat{e}_i^j}=h_i^j$ for $1\le i\le k_j$  to obtain an
extension map $h^j\in W^{1}_{p}\big(\hat{Q}^j, \QQl\big)$ such that
\begin{equation}\label{trace2}
h^j(x,1)=g_1(x), \ h^j\big(\frac{x}{1-\varepsilon}\big)=g_2(x), \ \forall x\in Q^j,
\end{equation}
and
\begin{equation}\label{p-energy-est4}
\calE_p(h^j, {\hat Q}^j)\le C\varepsilon\big(\calE_p(g_1, Q^j)+\calE_p(g_2, Q^j)+\int_{Q^j}\varepsilon^{-p}\calG^p(g_1(x), g_2(x))d\calH^j\big).
\end{equation}

For $j\ge m_p+1$, we use the fact that $\hat{e}_i^j$ is bilipschitz isomorphic to $B^{j+1}(0, \varepsilon)$, \emph{i.e.} there exist
$$F_i^j: {\hat e}_i^j\to B^{j+1}(0, \varepsilon) \ {\rm{with}}\
\|\nabla F_i^j\|_{L^\infty}+\|\nabla(F_i^j)^{-1}\|_{L^\infty}\le c(m).
$$
Since $j+1\ge m_p+2>p$, we can extend $h^j_i$ inductively from the boundary, homogeneous of degree 0:
$$h^j_i\big((F_i^j)^{-1}(z)\big)=h^j_i\big((F_i^j)^{-1}(\frac{\varepsilon z}{\|z\|})\big), \ z\in B^{j+1}(0,\varepsilon).$$
Then we have
\begin{equation}
\calE_p\big(h^j_i, {\hat{e}_i^j}\big)\le C\big(\int_0^{\varepsilon}\big(\frac{r}{\varepsilon}\big)^{(j+1)-1-p}\,dr\big)\calE_p(h_i^j, \partial{\hat e}_i^j)
\le C\varepsilon \calE_p\big(h_i^j, \partial{\hat e}_i^j\big). \label{p-energy-est5}
\end{equation}
By adding (\ref{p-energy-est5}) over all $1\le i\le k_j$ and applying (\ref{fubini}), we then obtain that for any $j\ge m_p+1$,
$h^j$ satisfies (\ref{trace2}) and
\begin{equation}
\calE_p(h^j, {\hat{Q}^j})\le C\varepsilon^{j+2-m} {\bf K}^p. \label{p-energy-est6}
\end{equation}
Since ${\hat Q}^{m-1}=B\setminus B(0,{1-\varepsilon})$, if we define $h=h^{m-1}$ then $h$ satisfies
$$h_{|\partial B}=g_1, \ h_{|\partial B(0,{1-\varepsilon})}=g_2\big(\frac{\cdot}{1-\varepsilon}\big),$$
and
$$
\calE_p(h, B\setminus B(0,{1-\varepsilon}))\le C\varepsilon {\bf K}^p.
$$
Hence the conclusions hold.  The proof is complete.
\end{proof}

\section{Proof of Theorem \ref{holder_reg}}

In this section, we will provide a proof of Theorem \ref{holder_reg} by induction on $Q\in \mathbb Z_+$. The idea is similar to that by Almgren \cite{Almgren},
but we follow closely the presentation by \cite{DS}.

For $v=\oplus_{i=1}^Q\lseg v_i\rseg\in \QQl$, we define the diameter and splitting distance of $v$ as
$$
{d}(v)=\max_{1\le i,j\le Q}\|v_i-v_j\|, \ {s}(v)=\min_{1\le i,j\le Q}\{|v_i-v_j|:  v_i\neq  v_j\}.
$$
If $v=Q\lseg v_0\rseg$ for some $v_0\in \ell_2$, then we define ${s}(v)=+\infty$.

First we need to construct a Lipschitz retraction map from $\QQl$ to $B_\calG(v,r)$, with Lipschitz norm no more than $1$.

\begin{Proposition} \label{lip_retra} For $v\in \QQl$ and
$0<r<s(v)/4<\infty$, there exists a Lipschitz map $\Phi: \QQl\to B_\calG(v,r)$ such that
$\Phi(u)=u$ for any $u\in B_\calG(u,r)$ and ${\rm{Lip}}(\Phi)\le 1$.
\end{Proposition}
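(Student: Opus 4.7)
The plan is to build $\Phi$ by lifting to the Hilbert space $(\ell_2)^Q$ and combining orthogonal projection onto a closed ball with descent to the quotient $q \colon (\ell_2)^Q \to \QQl$. Write $v = \bigoplus_{j=1}^K m_j \lseg w_j \rseg$ with the $w_j$'s distinct, so that $\|w_j - w_{j'}\| \geq s(v) > 4r$ for $j \neq j'$. Fix a representative $\bar v = (\bar v_1, \dots, \bar v_Q) \in (\ell_2)^Q$ of $v$. The hypothesis $r < s(v)/4$ implies that for any two cosets $\sigma \neq \sigma'$ modulo the stabilizer of $\bar v$ one has $\|\sigma \bar v - \sigma' \bar v\|_{(\ell_2)^Q} \geq s(v) > 4r$, so the family of closed balls $\bar B(\sigma \bar v, r) \subset (\ell_2)^Q$ is pairwise disjoint with mutual separation exceeding $2r$.

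For $\bar u \in (\ell_2)^Q$, I pick $\sigma^* = \sigma^*(\bar u) \in S_Q$ minimizing $\|\bar u - \sigma \bar v\|_{(\ell_2)^Q}$ (with a fixed tie-breaking rule) and define
\[
\tilde\Phi(\bar u) := \sigma^* \bar v + \min\!\left(1, \frac{r}{\|\bar u - \sigma^* \bar v\|}\right)(\bar u - \sigma^* \bar v),
\]
the Hilbert-space nearest-point projection of $\bar u$ onto $\bar B(\sigma^* \bar v, r)$. On each Voronoi cell $V_\sigma = \{\bar u : \sigma^*(\bar u) = \sigma\}$, $\tilde\Phi$ factors as a translation followed by orthogonal projection onto a closed convex ball, hence is 1-Lipschitz. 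Setting $\Phi(u) := q(\tilde\Phi(\bar u))$ yields the candidate retraction. The identity-on-$B_\calG(v,r)$ property is immediate: if $u \in B_\calG(v, r)$, choose the lift $\bar u$ with $\|\bar u - \bar v\| = \calG(u,v) \leq r$; then $\sigma^*(\bar u) = \mathrm{id}$ and $\tilde\Phi(\bar u) = \bar u$, so $\Phi(u) = u$.

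To establish the Lipschitz bound, given $u, u' \in \QQl$ I pick lifts $\bar u, \bar u'$ with $\|\bar u - \bar u'\|_{(\ell_2)^Q} = \calG(u, u')$ and trace the straight segment $\bar u_t = (1-t)\bar u + t\bar u'$, $t \in [0,1]$. This segment traverses finitely many closed Voronoi cells $V_{\sigma_1}, \dots, V_{\sigma_N}$; on each, $\tilde\Phi$ is 1-Lipschitz, so the piece of the image curve within the $i$-th cell has $(\ell_2)^Q$-length at most $|t_i - t_{i-1}| \cdot \|\bar u - \bar u'\|$. Summing the arclengths and pushing forward by $q$ gives $\calG(\Phi(u), \Phi(u')) \leq \|\bar u - \bar u'\|_{(\ell_2)^Q} = \calG(u, u')$, provided that the $\QQl$-class $[\tilde\Phi(\bar u_t)]$ is continuous at each cell interface.

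The principal technical obstacle is precisely this continuity at Voronoi boundaries, where two or more $\sigma \in S_Q$ achieve the minimum and the Hilbert projections onto $\bar B(\sigma \bar v, r)$ and $\bar B(\sigma' \bar v, r)$ can a priori land in $\QQl$-inequivalent positions---a simple two-point example in $\calQ_2(\R^2)$ already shows that a blunt Voronoi-cell projection is $\calG$-discontinuous at some tied configurations lying far from any orbit representative of $\bar v$. Overcoming this requires exploiting the full gap $s(v) > 4r$, for instance by precomposing $\tilde\Phi$ with a 1-Lipschitz map that first attracts each component of $\bar u$ toward the cluster structure of $v$ so that any remaining tied matchings differ only by permutations within a single cluster $w_j$, which the subsequent Hilbert projection reconciles automatically. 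Constructing this pre-attraction while preserving the 1-Lipschitz constant, and checking that the composed map still fixes $B_\calG(v, r)$ pointwise, is the delicate heart of the argument.
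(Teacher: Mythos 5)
Your construction has a genuine gap, and it is exactly the one you flag at the end: the map $\Phi(u)=q(\tilde\Phi(\bar u))$ is not well defined as a Lipschitz (or even continuous) map on $\QQl$. On the interface between two Voronoi cells $V_\sigma$ and $V_{\sigma'}$ the two nearest-point projections onto $\bar B(\sigma\bar v,r)$ and $\bar B(\sigma'\bar v,r)$ are in general \emph{not} in the same $S_Q$-orbit: already for $Q=2$, $v=\lseg v_1\rseg\oplus\lseg v_2\rseg$, a tied lift $\bar u=(a,b)$ with $a\neq b$ produces the two projected points $\bigl(v_1+t(a-v_1),\,v_2+t(b-v_2)\bigr)$ and $\bigl(v_2+t(a-v_2),\,v_1+t(b-v_1)\bigr)$, which differ as elements of $\calQ_2(\ell_2)$. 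Hence any tie-breaking rule gives a map that jumps across cell boundaries, and your arclength argument along the segment $\bar u_t$ breaks precisely where it is needed, since it presupposes continuity of $t\mapsto[\tilde\Phi(\bar u_t)]$ at the interfaces. The ``pre-attraction'' you invoke to repair this is not constructed, and it is not a routine patch: it is the entire content of the proposition, since the difficulty is intrinsic to projecting distant points onto the sphere $\partial B_\calG(v,r)$, whose points retain angular information that depends on the ambiguous matching.

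The paper's proof avoids this by never projecting onto the radius-$r$ sphere. Since the distinct points $v_j$ of $v$ are more than $4r$ apart, any $u$ with $\calG(u,v)<2r$ splits unambiguously into clusters $u_j\in B_\calG(k_j\lseg v_j\rseg,2r)$; on the annulus $B_\calG(v,2r)\setminus B_\calG(v,r)$ one sets $\Phi(u)=\oplus_{j,l}\lseg v_j+\tfrac{2r-\calG(u,v)}{\calG(u,v)}(u_{l,j}-v_j)\rseg$, while $\Phi=\mathrm{id}$ on $B_\calG(v,r)$ and $\Phi\equiv v$ outside $B_\calG(v,2r)$. The scaling factor is $1$ at $\calG(u,v)=r$ and $0$ at $\calG(u,v)=2r$, so the three pieces glue; collapsing everything at the outer radius to the single point $v$, and using a formula that depends only on the cluster centers $v_j$ (hence is insensitive to permutations within a cluster), is exactly what eliminates the matching ambiguity that defeats the Voronoi/projection scheme, and a direct computation gives $\mathrm{Lip}(\Phi)\le 1$. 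If you want to salvage your approach, you would in effect have to build this radial collapse into your ``pre-attraction,'' at which point you have reproduced the paper's map.
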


\begin{proof} Write $v=\oplus_{j=1}^J k_j \lseg v_j\rseg$ such that $J\ge 2$ and $\|v_i-v_j\|>4r$ for $i\neq j$. If $\calG(u, v)<2r$, then we have
that $u=\oplus_{j=1}^J u_j$ with $u_j=\oplus_{l=1}^{k_j}\lseg u_{l,j}\rseg \in B_\calG (k_j\lseg v_j\rseg, 2r) \subset \calQ_{k_j}(\ell_2)$ for $1\le j\le J$.
Now we can define a Lipschitz retraction map $\Phi:\QQl\to B_\calG(v, r)$ by letting
\begin{equation}\label{retraction}
\Phi(u)=\begin{cases}\displaystyle\bigoplus_{j=1}^J\bigoplus_{l=1}^{k_j}
\lseg\frac{2r-\calG(u,v)}{\calG(u,v)}(u_{l,j}-v_j)+v_j\rseg,
& u\in B_\calG(v, 2r)\setminus B_\calG(v,r),\\
v, & u\in \QQl\setminus B_\calG(v,2r),\\
u, & u\in B_\calG(v,r).
\end{cases}
\end{equation}
It is readily seen that $\Phi$ is an identity map in $B_\calG(v,r)$ and satisfies
$$\calG(\Phi(u_1), \Phi(u_2))\le \calG(u_1, u_2), \ \forall u_1, u_2\in \QQl.$$
Thus $\Phi$ has Lipschitz norm at most $1$.
\end{proof}

\begin{Lemma} \label{large_separation}
 For any $0<\varepsilon<1$, set $\beta(\varepsilon, Q)=\big(\frac{\varepsilon}3\big)^{3^Q}$.
Then, for any $P\in \QQl$ with ${s}(P)<+\infty$, there exists a point $\widetilde{P}\in \QQl$ such that
\begin{equation}\label{large_separation1}
\begin{cases}
\beta(\varepsilon, Q){d}(P)\le {s}(\widetilde P)<+\infty,\\
\ \ \ \  \mathcal G_2(\widetilde P, P)\le\varepsilon {s}(\widetilde P).
\end{cases}
\end{equation}
\end{Lemma}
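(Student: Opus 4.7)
I would argue by induction on $Q$. The case $Q=1$ is vacuous since then $s(P)=+\infty$ by convention. For $Q=2$, we have $s(P)=d(P)$, so $\widetilde P:=P$ trivially satisfies both conclusions because $\beta(\varepsilon,2)<1$.

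For the inductive step with $Q\ge 3$, I would dichotomize on whether $P$ is already sufficiently separated. If $s(P)\ge\beta(\varepsilon,Q)d(P)$, simply set $\widetilde P:=P$. Otherwise $s(P)<\beta(\varepsilon,Q)d(P)$, and I would cluster: group the support values of $P$ into equivalence classes under the transitive closure of the relation ``at distance at most $\rho$'', for a threshold $\rho$ of order $\beta(\varepsilon,Q)d(P)$, and replace each value in a class by the multiplicity-weighted barycenter of that class. This yields a point $P'\in\calQ_Q(\ell_2)$ with strictly fewer distinct support values than $P$, and direct estimates give
\[
\calG(P',P)\le C(Q)\rho,\qquad d(P')\ge d(P)/2,
\]
the latter provided $\rho\ll d(P)/Q$, which is ensured by the smallness of $\beta(\varepsilon,Q)$.

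Now $P'$ is the multiplicity-lift of some $\widehat{P'}\in\calQ_{Q'}(\ell_2)$ with $Q'<Q$. I would apply the inductive hypothesis to $\widehat{P'}$ with parameter $\varepsilon/3$ in place of $\varepsilon$, obtaining $\widetilde{\widehat{P'}}$ with $s(\widetilde{\widehat{P'}})\ge\beta(\varepsilon/3,Q')d(\widehat{P'})$ and $\calG(\widetilde{\widehat{P'}},\widehat{P'})\le(\varepsilon/3)s(\widetilde{\widehat{P'}})$. Lifting $\widetilde{\widehat{P'}}$ back with the same multiplicities as $P'$ defines $\widetilde P$, which satisfies $s(\widetilde P)=s(\widetilde{\widehat{P'}})$ and $\calG(\widetilde P,P')\le\sqrt{Q}\,\calG(\widetilde{\widehat{P'}},\widehat{P'})$. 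The triangle inequality then gives
\[
\calG(\widetilde P,P)\le\sqrt{Q}\,(\varepsilon/3)s(\widetilde P)+C(Q)\rho,
\]
and a judicious choice of $\rho$ bounds the right-hand side by $\varepsilon\,s(\widetilde P)$.

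The main obstacle will be the bookkeeping of constants that forces the peculiar exponent $3^Q$ in $\beta(\varepsilon,Q)=(\varepsilon/3)^{3^Q}$. The critical inequality to verify is $\beta(\varepsilon/3,Q-1)\ge\beta(\varepsilon,Q)$, equivalently $(\varepsilon/9)^{3^{Q-1}}\ge(\varepsilon/3)^{3^Q}$, which reduces to $3\ge\varepsilon^2$ and so holds trivially; this is exactly what the cube in the recursion $\beta(\varepsilon,Q)=\beta(\varepsilon,Q-1)^3$ is designed to absorb. Ensuring that this, together with the contraction $d(P')\ge d(P)/2$ and the triangle-inequality slack $C(Q)\rho$, simultaneously close the induction, is the core technical step.
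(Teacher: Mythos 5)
Your overall strategy (dichotomy on $s(P)$ versus $\beta(\varepsilon,Q)d(P)$, clustering by the transitive closure at a small scale, collapsing, and inducting) is the same in spirit as the argument of De Lellis--Spadaro's Lemma 3.8, which is what the paper invokes without reproducing. But there is a genuine quantitative gap in the way you set up the induction. By reducing $P'$ to the multiplicity-free point $\widehat{P'}\in\calQ_{Q'}(\ell_2)$ and lifting back, your closeness estimate necessarily picks up the factor $\sqrt{\max_j m_j}\le\sqrt{Q}$ that you yourself record: $\calG(\widetilde P,P')\le\sqrt{Q}\,\calG(\widetilde{\widehat{P'}},\widehat{P'})\le \sqrt{Q}\,(\varepsilon/3)\,s(\widetilde P)$. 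As soon as $Q\ge 9$ this single term is already $\ge\varepsilon\,s(\widetilde P)$, so no ``judicious choice of $\rho$'' can make $\sqrt{Q}(\varepsilon/3)s(\widetilde P)+C(Q)\rho\le\varepsilon\,s(\widetilde P)$: the offending term does not involve $\rho$ at all. Your ``critical inequality'' $\beta(\varepsilon/3,Q-1)\ge\beta(\varepsilon,Q)$, i.e. $3\ge\varepsilon^2$, checks only the separation recursion and silently drops the $\sqrt{Q}$ factor from the distance recursion. The obvious repair, running the induction with parameter $\varepsilon/(3\sqrt{Q})$, does not rescue the stated constant either: the separation chain then requires roughly $\bigl(3/(\varepsilon^2\sqrt{Q})\bigr)^{3^{Q-1}}\ge 2$, which fails for $\varepsilon$ close to $1$ and $Q$ large, so $\beta(\varepsilon,Q)=(\varepsilon/3)^{3^Q}$ is not recovered.

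The fix is to change the induction variable rather than the parameters: do not leave $\calQ_Q(\ell_2)$. Induct (or iterate) on the number of distinct values in the support, collapsing each cluster to a single point of $\ell_2$ that carries the total multiplicity of the cluster, so that the intermediate points remain elements of $\calQ_Q(\ell_2)$ and the inductive closeness estimate applies directly, with no multiplicity lift and hence no $\sqrt{Q}$ loss. Then the degradation $\varepsilon\mapsto\varepsilon/3$ at each of at most $Q$ collapses, combined with the cubing of scales built into $\beta(\varepsilon,Q)=(\varepsilon/3)^{3^Q}=\beta(\varepsilon,Q-1)^3$, absorbs both the halving $d(P')\ge d(P)/2$ and the additive $C(Q)\rho$ slack, and the induction closes; this is essentially the bookkeeping in De Lellis--Spadaro's proof. (Note also that for the application in the paper only the positivity of $\beta(1/16,Q)$ and $\beta(1/9,Q)$ matters, but the lemma as stated commits to the explicit value of $\beta$, so your argument must actually achieve it.)
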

\begin{proof} The proof can be done exactly in the same way as Lemma 3.8 of \cite{DS} page 35. Here we omit it. \end{proof}

\begin{Proposition} \label{split} Assume $Q \geq 2$. There exists $\alpha(Q)>0$ such that if $u\in W^{1}_{p}(\partial B(0,r), \QQl)$
satisfies that  for some $P \in \QQl$, $\calG(u(x), P)\le\alpha(Q) d(P)$ for $\calH^{m-1}$ a.e. $x\in\partial B(0,r)$,
then there exist $1\le K, L\le Q-1$ with $K+L=Q$ and two  functions $v\in W^{1}_{p}(\partial B(0,r), \calQ_K(\ell_2))$
and $w\in W^{1}_{p}(\partial B(0,r), \calQ_L(\ell_2))$ so that $u= v \oplus w$ a.e. in $\partial B(0,r)$.
\end{Proposition}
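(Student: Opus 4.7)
The strategy is to replace $P$ by a well-separated representative $\widetilde{P}$ via Lemma \ref{large_separation}, deduce that $u(x)$ lies in a small $\calG$-ball around $\widetilde{P}$ for a.e.\ $x$, exploit the canonical cluster decomposition that is already implicit in the proof of Proposition \ref{lip_retra} to split $u(x)$ atomwise, and finally verify that this pointwise splitting transports to $W^1_p$.

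If $d(P) = 0$ the hypothesis forces $u(x) = P = Q\lseg v_0 \rseg$ a.e., which splits trivially. Suppose therefore $d(P) > 0$, equivalently $s(P) < +\infty$, and fix $\varepsilon = \varepsilon(Q) \in (0,1)$ to be chosen below. Applying Lemma \ref{large_separation} yields $\widetilde{P} \in \QQl$ with $\beta(\varepsilon,Q)\, d(P) \leq s(\widetilde{P}) < +\infty$ and $\calG(\widetilde{P}, P) \leq \varepsilon\, s(\widetilde{P})$. Writing $\widetilde{P} = \bigoplus_{j=1}^J k_j \lseg \widetilde{P}_j \rseg$ with the $\widetilde{P}_j \in \ell_2$ pairwise distinct, $\|\widetilde{P}_i - \widetilde{P}_j\| \geq s(\widetilde{P})$ for $i \neq j$, and $\sum_j k_j = Q$, the finiteness of $s(\widetilde{P})$ forces $J \geq 2$. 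The triangle inequality then gives
$$
\calG(u(x), \widetilde{P}) \;\leq\; \calG(u(x), P) + \calG(P, \widetilde{P}) \;\leq\; \alpha(Q)\, d(P) + \varepsilon\, s(\widetilde{P}) \;\leq\; \left(\tfrac{\alpha(Q)}{\beta(\varepsilon, Q)} + \varepsilon\right) s(\widetilde{P})
$$
for $\calH^{m-1}$-a.e.\ $x \in \partial B(0,r)$. Fixing $\varepsilon$ small and then $\alpha(Q)$ small enough that the parenthesis is $\leq 1/4$, we obtain $u(x) \in B_\calG(\widetilde{P}, s(\widetilde{P})/4)$ a.e.

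On this ball, the clustering is unambiguous: any optimal pairing between $u' \in B_\calG(\widetilde{P}, s(\widetilde{P})/4)$ and $\widetilde{P}$ distributes the $Q$ atoms of $u'$ into groups of size $k_1, \dots, k_J$, each atom assigned to the unique $\widetilde{P}_j$ within $s(\widetilde{P})/4$ of it (the remaining $\widetilde{P}_{j'}$ being at distance $\geq 3 s(\widetilde{P})/4$). This gives a well-defined decomposition $u' = \bigoplus_{j=1}^J u'_j$ with $u'_j \in B_\calG(k_j \lseg \widetilde{P}_j\rseg, s(\widetilde{P})/4) \subset \calQ_{k_j}(\ell_2)$, exactly as used in the proof of Proposition \ref{lip_retra}. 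A standard swap argument shows the projections $\pi_j : B_\calG(\widetilde{P}, s(\widetilde{P})/4) \to \calQ_{k_j}(\ell_2)$ are $1$-Lipschitz, because for two such points $u', u''$ any cross-cluster pair contributes at least $(s(\widetilde{P})/2)^2$ while the corresponding cluster-preserving swap contributes at most $(s(\widetilde{P})/2)^2$, so an optimal pairing may be taken cluster-preserving, whence $\calG(u', u'')^2 = \sum_j \calG(u'_j, u''_j)^2$. Post-composing the Sobolev map $u$ with each $\pi_j$, by the $W^1_p$ chain rule for Lipschitz compositions from \cite{BDPG}, produces $u_j := \pi_j \circ u \in W^1_p(\partial B(0,r), \calQ_{k_j}(\ell_2))$ with $u = \bigoplus_j u_j$ a.e. Setting $K := k_1$, $L := Q - k_1$, $v := u_1$, and $w := \bigoplus_{j=2}^J u_j$ finishes the proof, since $J \geq 2$ forces $1 \leq K, L \leq Q - 1$.

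The main obstacle will be the quantitative calibration: the constant $\beta(\varepsilon, Q) = (\varepsilon/3)^{3^Q}$ decays extremely fast in $Q$, so $\alpha(Q)$ must be chosen considerably smaller than $\varepsilon \beta(\varepsilon, Q)$, and the definition of $\alpha(Q)$ is essentially dictated by this tradeoff. Once $\widetilde{P}$ has been produced and $u$ has been trapped in $B_\calG(\widetilde{P}, s(\widetilde{P})/4)$, the remaining steps -- pointwise cluster extraction, the $1$-Lipschitz property of the projections, and transport to the Sobolev class -- are routine consequences of the separation structure and of the results of \cite{BDPG}.
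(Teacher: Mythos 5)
Your proposal is correct and follows essentially the same route as the paper: apply Lemma \ref{large_separation} to replace $P$ by a well-separated $\widetilde P$, use the triangle inequality together with the calibration $\alpha(Q)\approx\varepsilon\beta(\varepsilon,Q)$ (the paper fixes $\varepsilon=1/9$) to trap $u(x)$ in $B_\calG(\widetilde P, s(\widetilde P)/4)$ for a.e.\ $x$, and then split $u$ clusterwise according to the atoms of $\widetilde P$. You in fact give more detail than the paper on why the pieces remain $W^1_p$ (via the $1$-Lipschitz cluster projections) and you treat the degenerate case $d(P)=0$ explicitly, which the paper leaves implicit; otherwise the arguments coincide.
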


\begin{proof} Set $\varepsilon=1/9$ and $\alpha(Q)=\varepsilon\beta(\varepsilon, Q)=\frac19 (27)^{-3^Q}$.
From Lemma \ref{large_separation}, we find a point $\widetilde{P}\in\QQl$ satisfying  (\ref{large_separation1}).
Hence we have that for $\calH^{m-1}$ a.e. $x\in\partial B(0,r)$,
$$\calG(u(x),\widetilde P)\le \calG(u(x),P)+\calG(P,\widetilde P)\le \alpha(Q) d(P)+\frac{s(\widetilde P)}9
\le \frac{2s(\widetilde P)}9<\frac{s(\widetilde P)} 4.$$
Since $s(\widetilde P)<+\infty$, there exists $2\le J\le Q$ such that $\widetilde P=\oplus_{j=1}^J k_j \lseg\widetilde P_j\rseg\in\QQl$
with the $\widetilde P_j$'s all different. Therefore, there exists $J$ functions
$$u_j:\partial B(0,r)\to B_\calG(k_j\lseg\widetilde P_j\rseg, 2r)\subset\calQ_{k_j}(\ell_2)$$
such that $u=\oplus_{j=1}^J u_j$ holds $\calH^{m-1}$ a.e. in $\partial B(0,r)$. Since $u\in W^{1}_{p}(\partial B(0,r), \QQl)$, it
follows that $u_j\in W^{1}_{p}(\partial B(0,r), \calQ_{k_j}(\ell_2))$ for $1\le j\le J$. The proof is complete. \end{proof}

Now we are ready to give a proof of Theorem \ref{holder_reg}.

\begin{proof}[Proof of Theorem \ref{holder_reg}] The key step is to establish the following decay property: there exists $\eta_0>0$ depending
on $p, m, Q$ such that for any $u\in W^{1}_{p}(\partial B(0,r), \QQl)$,
\begin{equation}\label{p-inf2}
\calC(p, Q, u_{|\partial B(0,r)})\le \big(\frac{1}{m-p}-2\eta_0\big) r\calE_p(u, \partial B(0,r)).
\end{equation}
By scalings, one can see that if (\ref{p-inf2}) holds for $r=1$, then it holds for all $r>0$.
We will prove (\ref{p-inf2}) based on an induction on $Q$. For $Q=1$, it is clear that (\ref{p-inf2}) follows from Corollary \ref{holder_reg1}.
Let $Q\ge 2$ be fixed and assume that (\ref{p-inf2}) holds for every $Q^*<Q$. Assume, furthermore, that
$$d(\overline{u})^p>M\calE_p(u, \partial B)$$
for some large constant $M>1$, which will be chosen later.
Apply Lemma \ref{large_separation} with $\varepsilon=\frac1{16}$ and  $P=\overline{u}$, we obtain that there are $2\le J\le Q$
and a point $\widetilde {P}=\oplus_{j=1}^J k_j \lseg Q_j\rseg\in \QQl$ such that
\begin{equation}\label{sep1}
 \beta d(\overline{u})<s(\widetilde P)=\min\{\|Q_i-Q_j\|: \ i\neq j\},
\end{equation}
\begin{equation}\label{sep2}
\calG(\widetilde P, \overline{u})\le \frac{s(\widetilde P)}{16},
\end{equation}
where $\beta=\beta(1/16, Q)$ is the constant given by Lemma \ref{large_separation}.
Let $\Phi: \QQl\to B_\calG(\widetilde P,s(\widetilde P)/8)$ be the Lipschitz contraction map given by Proposition \ref{lip_retra}.
For a small $\eta>0$,  define
$$h : x \in B(0,1-\eta) \mapsto \Phi\big(u\big(\frac{x}{1-\eta}\big)\big) \in \QQl.$$
Then we have that $\Phi(u)\in W^{1}_{p}\big(\partial B(0,{1-\eta}), B_\calG(\widetilde P, s(\widetilde P)/8)\big)$. Apply  Proposition \ref{split}, we conclude that
there exist $1\le K, L\le Q-1$, with $K+L=Q$, and $h_1\in W^{1}_{p}(\partial B(0,{1-\eta}), \calQ_K(\ell_2))$,
$h_2\in W^{1}_{p}(\partial B(0,{1-\eta}), \calQ_L(\ell_2))$ such that $h= h_1 \oplus h_2$ in $\partial B(0,{1-\eta})$.
By the induction hypothesis, we have
\begin{equation}\label{p-min4}
\begin{cases}\calC(p, K, h_{1|\partial B(0,{1-\eta})})\le \left(\frac{1}{m-p}-6\eta_0\right) (1-\eta) \calE_p(h_1, \partial B(0,{1-\eta})),
\\
\calC(p, L, h_{2|\partial B(0,{1-\eta})})\le \left(\frac{1}{m-p}-6\eta_0\right) (1-\eta) \calE_p(h_2, \partial B(0,{1-\eta})).
\end{cases}
\end{equation}
By (\ref{p-min4}), there exist $\hat{h}_1\in W^{1}_{p}(B(0,{1-\eta}), \calQ_K(\ell_2))$ and  $\hat{h}_2\in W^{1}_{p}(B(0,{1-\eta}), \calQ_L(\ell_2))$
such that
\begin{equation}\label{p-min5}
\begin{cases}
\calE_p(\hat h_1, B(0,{1-\eta}))\le  \left(\frac{1}{m-p}-6\eta_0\right) (1-\eta) \calE_p(h_1, \partial B(0,{1-\eta}))+\eta_0 \calE_p(u,\partial B),\\
\calE_p(\hat h_2, B(0,{1-\eta}))\le  \left(\frac{1}{m-p}-6\eta_0\right) (1-\eta) \calE_p(h_2, \partial B(0,{1-\eta}))+\eta_0 \calE_p(u,\partial B).
\end{cases}
\end{equation}
Define $\hat h=\hat h_1 \oplus \hat h_2$. Then $\hat h\in W^{1}_{p}(B(0,{1-\eta}), \QQl)$ satisfies
$\hat h=h$ on $\partial B(0,{1-\eta})$ and
\setlength\arraycolsep{1.4pt}
\begin{eqnarray}\label{p-min6}
\calE_p(\hat h, B_{1-\eta}) & \le &  \big(\frac{1}{m-p}-6\eta_0\big) (1-\eta)
 \left(\calE_p(h_1, \partial B(0,{1-\eta}))+\calE_p(h_2, \partial B(0,{1-\eta}))\right)
  \nonumber \\
&& +2\eta_0 \calE_p(u,\partial B)
\nonumber\\
&=& \big(\frac{1}{m-p}-6\eta_0\big) (1-\eta) \calE_p(h, \partial B(0,{1-\eta}))+2\eta_0 \calE_p(u,\partial B)\nonumber\\
&= & \big(\frac{1}{m-p}-6\eta_0\big) \calE_p(\Phi(u), \partial B)+2\eta_0 \calE_p(u,\partial B)\nonumber\\
&\le &  \big(\frac{1}{m-p}-6\eta_0\big) \calE_p(u, \partial B)+2\eta_0 \calE_p(u,\partial B) \nonumber \\
& = & \big(\frac{1}{m-p}-4\eta_0\big) \calE_p(u, \partial B)
\end{eqnarray}
where we have used the fact that ${\rm{Lip}}(\Phi)\le 1$ in the last inequality.

Now let $\hat g\in W^{1}_{p}(B\setminus B(0,{1-\eta}), \QQl)$ be an extension of $$\Phi\big(u\big(\frac{\cdot}{1-\eta}\big)\big)\in W^{1}_{p}(\partial B(0,{1-\eta}), \QQl)$$
and $u\in W^{1}_{p}(\partial B, \QQl)$ as in Lemma \ref{1d-ext}. Define $\hat u\in W^{1}_{p}(B, \QQl)$ by
$$
\hat u=\begin{cases} \hat h &\  {\rm{in}}\ B(0,{1-\eta})\\
\hat  g  &\  {\rm{in}}\ B\setminus B(0,{1-\eta}).
\end{cases}
$$
Then we have
\setlength\arraycolsep{1.4pt}
\begin{eqnarray}
&&\calE_p(\hat u, B)\nonumber\\
& = & \calE_p(\hat h, B(0,{1-\eta}))+\calE_p(\hat g, B\setminus B(0,{1-\eta}))\nonumber\\
&\le & \big(\frac{1}{m-p}-4\eta_0+C\eta\big) \calE_p(u, \partial B) +\frac{C}{\eta}\int_{\partial B}\calG^p(u, \Phi(u))d\calH^{m-1}. \label{p-min7}
\end{eqnarray}
Now we need to estimate $\int_{\partial B}\calG^p(u, \Phi(u))d\calH^{m-1}$.  Define
$$E:=\left\{x\in \partial B: u(x)\neq \Phi(u(x))\right\}=\left\{x\in \partial B: u(x) \not\in B_\calG\big(\widetilde{P},{\frac{s(\widetilde P)}8}\big)\right\}.$$
Since $\Phi(\overline{u})=\overline{u}$, we have
$$\calG(\overline{u}, \Phi(u(x)))\le \calG(\overline{u}, u(x)),  \ \forall x\in\partial B.$$
Hence we have
\setlength\arraycolsep{1.4pt}
\begin{eqnarray}
\int_{\partial B}\calG^p(u, \Phi(u))d\calH^{m-1} & = & \int_{E}\calG^p(u, \Phi(u))d\calH^{m-1}\nonumber\\
&\le & C\int_{E}\left(\calG^p(u, \overline{u})+\calG^p(\overline{u},\Phi(u))\right)d\calH^{m-1}\nonumber\\
&\le & C\int_{E}\calG^p(u, \overline{u})\,d\calH^{m-1}\nonumber\\
&\le & C\|\calG(u, \overline{u})\|_{L^{p^*}(\partial B)}^{\frac{p}{p^*}}(\calH^{m-1}(E))^{1-\frac{p}{p^*}}\nonumber\\
&\le & C \calE_p(u, \partial B)(\calH^{m-1}(E))^{1-\frac{p}{p^*}},
\end{eqnarray}
where $p^*$ is the Sobolev exponent of $p$ in $\mathbb R^{n-1}$:
$$p^*=\begin{cases} \frac{(m-1)p}{m-1-p} & {\rm{if}}\ p<m-1,\\
 {\rm{any}} \ q\in (p, +\infty) & {\rm{if}}\ p\ge m-1.\end{cases}
$$
For any $x\in E$, we have
$$\calG(u(x), \overline{u})\ge \calG(u(x), \widetilde P)-\calG(\widetilde P, \overline{u})
\ge \frac{s(\widetilde P)}8-\frac{s(\widetilde P)}{16}=\frac{s(\widetilde P)}{16}.$$
So we have that
\setlength\arraycolsep{1.4pt}
\begin{eqnarray}\label{size_of_E}
\calH^{m-1}(E) & \le &  \calH^{m-1}\Big(\big\{x\in\partial B : \calG(u(x), \overline{u})\ge \frac{s(\widetilde P)}{16}\big\}\Big)\nonumber\\
&\le & \frac{C}{s^p(\widetilde P)}\int_{\partial B}\calG^p(u(x), \overline{u}) \nonumber \\
& \le &  \frac{C}{d^p(\overline{u})}\calE_p(u, \partial B) \nonumber \\
& \le & \frac{C}{M}.
\end{eqnarray}
Therefore we obtain
\begin{equation}\label{size_of_E1}
\int_{\partial B}\calG^p(u, \Phi(u))d\calH^{m-1}\le C\left(\frac{C}{M}\right)^{1-\frac{p}{p^*}}
\calE_p(u, \partial B).
\end{equation}
Substituting (\ref{size_of_E1}) into (\ref{p-min7}), we find that
\begin{equation}
\calE_p(\hat u, B)
\le\left(\frac{1}{m-p}-4\eta_0+C\eta+\frac{C}{\eta} M^{\frac{p}{p^*}-1}\right)
\calE_p(u, \partial B). \label{p-min8}
\end{equation}
Now we first choose $\eta=\eta_0/C$ and then choose
$$
M=\big(\frac{C^2}{\eta_0^2}\big)^{\frac{p^*}{p^*-p}}
$$
so that (\ref{p-min8}) yields
\begin{equation}
\calC(p, Q, u_{|\partial B})\le \calE_p(\hat u, B)
\le \big(\frac{1}{m-p}-2\eta_0\big)
\calE_p(u, \partial B), \ {\rm{if}}\ d^p(\overline{u})>M \calE_p(u, \partial B). \label{p-min9}
\end{equation}
On the other hand, Lemma \ref{p-min0} implies that
\begin{equation}
\calC(p, Q, u_{|\partial B})\le \calE_p(\hat u, B)
\le \big(\frac{1}{m-p}-2\eta_0\big)
\calE_p(u, \partial B), \ {\rm{if}}\ d^p(\overline{u})\le M \calE_p(u, \partial B). \label{p-min10}
\end{equation}
Combining (\ref{p-min9}) and (\ref{p-min10}) yields that (\ref{p-inf2}) holds for $r=1$.  Note that (\ref{p-inf2})
for all $r\neq 1$ follows from (\ref{p-inf2}) for $r=1$ by simple scalings.

Since $u$ is $(\omega,p)$-Dir-minimizing in $U$,  by (\ref{p-inf2}) we have that for any ball $B(x,r)\subset U$,
\setlength\arraycolsep{1.4pt}
\begin{eqnarray}\label{p-min11}
\calE_p(u, B(x,r))&\le& (1+\omega(r))\calC(p,Q, u_{|\partial B(x,r)})\nonumber\\
&\le& (1+\omega(r))\big(\frac{1}{m-p}-2\eta_0\big) r \calE_p(u, \partial B(x,r)).
\end{eqnarray}
Since $\lim_{r\downarrow 0}\omega(r)=0$, there exists $r_0>0$ such that
$$(1+\omega(r))\big(\frac{1}{m-p}-2\eta_0\big)\le \frac{1}{m-p}-\eta_0, \ \forall 0<r\le r_0.$$
Thus we have that
\begin{equation}\label{p-min12}
\calE_p(u, B(x,r))\le \big(\frac{1}{m-p}-\eta_0\big) r \calE_p(u, \partial B(x,r))
\end{equation}
holds for all $B(x,r)\subset U$ with $0<r\le r_0$. It is standard that integrating (\ref{p-min12}) over $r$ yields
that
\begin{equation}\label{p-decay}
\frac{1}{r^{m-p+\eta_0}}\calE_p(u, B(x,r))\le \frac{1}{r_0^{m-p+\eta_0}}\calE_p(u, B(x,r_0)), \ \forall B(x,r_0)\subset U, \ 0<r\le r_0.
\end{equation}
This, combined with the Morrey decay lemma \cite{Morrey} for $\QQl$-valued functions, implies that
$u\in C^{\eta_0/p}(U)$ and
$$\|u\|_{C^{\eta_0/p}(B(x_0,r)) } \le C\frac{1}{r_0^{m-p+\eta_0}}\calE_p(u, B(x_0,r_0))$$
holds for $B(x_0,r_0)\subset U$ and $0<r\le r_0$. This completes the proof of Theorem \ref{holder_reg}.
\end{proof}

\section{Squeeze stationary maps}

We say that a map $f \in W^1_2(U, \QQl)$ is squeeze stationary in $U$ whenever for every $X \in C^\infty_c(U, \R^m)$, we have
\begin{equation}\label{def_squeeze}
\frac{d}{dt}\Big|_{t=0} \int_U \lno D(f \circ \Phi_t) \rno^2 = 0,
\end{equation}
where $\Phi_t(x) = x + tX(x)$ is a diffeomorphism of $U$ to itself for small values of $t$.

\begin{Proposition}
A map $f \in W^1_2(U, \QQl)$ is squeeze stationary if and only if for every vector field $X \in C^\infty_c(U, \R^m)$ one has:
\begin{equation}\label{euler_lagrange_squeeze}
2 \sum_{i=1}^Q \int_U \langle Df(y), Df(y) \circ DX(y) \rangle dy - \int_U \lno Df(y)\rno^2 \mathrm{div}\, X(y) dy = 0.
\end{equation}
\end{Proposition}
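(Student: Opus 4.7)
The plan is to compute $\frac{d}{dt}\big|_{t=0} \int_U \lno D(f \circ \Phi_t) \rno^2$ by an explicit change of variables, identifying the Euler-Lagrange expression as the derivative, so that the equivalence reduces to the arbitrariness of $X \in C^\infty_c(U,\R^m)$.

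First, for $t$ in a small neighborhood of $0$, the map $\Phi_t(x)=x+tX(x)$ is a $C^\infty$-diffeomorphism of $U$ onto itself (since $X$ is compactly supported and $\|DX\|_\infty < \infty$). By the chain rule for multiple-valued Sobolev maps (from \cite{BDPG}), we have $D(f\circ\Phi_t)(x) = Df(\Phi_t(x))\circ D\Phi_t(x)$ almost everywhere, where on each sheet $i$ the composition is the usual composition of linear maps. Performing the change of variables $y=\Phi_t(x)$, with Jacobian $J_t(y):=|\det D\Phi_t^{-1}(y)|$ and writing $A_t(y):=D\Phi_t(\Phi_t^{-1}(y))$, one obtains
\begin{equation*}
\int_U \lno D(f\circ\Phi_t)(x)\rno^2\,dx = \int_U \sum_{i=1}^Q \| Df_i(y)\circ A_t(y) \|_{\mathrm{HS}}^2 \, J_t(y)\, dy,
\end{equation*}
where $Df = \oplus_{i=1}^Q \lseg Df_i\rseg$.

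Next, I would differentiate under the integral sign at $t=0$. Using $A_0(y)=I$, $J_0(y)=1$, together with $\frac{d}{dt}\big|_{t=0} A_t(y) = DX(y)$ (from $\frac{d}{dt}|_{t=0}\Phi_t^{-1}(y) = -X(y)$ combined with $\frac{d}{dt}|_{t=0}D\Phi_t = DX$) and the classical formula $\frac{d}{dt}\big|_{t=0} J_t(y) = -\mathrm{div}\,X(y)$, the pointwise derivative of the integrand at $t=0$ equals
\begin{equation*}
2\sum_{i=1}^Q \langle Df_i(y),\,Df_i(y)\circ DX(y)\rangle - \lno Df(y)\rno^2 \,\mathrm{div}\,X(y).
\end{equation*}
The exchange of derivative and integral is justified by dominated convergence: for $|t|$ sufficiently small, $\|A_t\|_\infty$ and $\|J_t\|_\infty$ are bounded by a constant depending only on $\|DX\|_\infty$, so the integrand and its $t$-derivative are dominated by $C\lno Df\rno^2 \in L^1(U)$. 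Thus the squeeze-stationarity condition \eqref{def_squeeze} is precisely \eqref{euler_lagrange_squeeze} for this particular $X$, and since $X \in C^\infty_c(U,\R^m)$ is arbitrary, the equivalence follows.

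The main technical point to verify carefully is the chain rule identity $D(f\circ\Phi_t) = Df\circ\Phi_t\cdot D\Phi_t$ in the $W^{1,2}$ setting for $\calQ_Q(\ell_2)$-valued maps, and the measurability/integrability of the sheet-by-sheet quantities $\|Df_i\circ A_t\|_{\mathrm{HS}}^2$. Both rest on the fact that the approximate differential $Df$ is defined as a $\calQ_Q(\mathrm{Hom}(\R^m,\ell_2))$-valued function, so the composition with a fixed linear map $A_t(y)$ acts sheet-wise and the Hilbert-Schmidt norms assemble into $\lno\cdot\rno^2$ independently of the ordering of the sheets. Once these measure-theoretic preliminaries are granted, the proof is the computation above.
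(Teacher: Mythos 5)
Your proposal is correct and follows essentially the same route as the paper: apply the chain rule, change variables $y=\Phi_t(x)$ so the integrand becomes $\sum_i\|Df_i(y)\circ A_t(y)\|_{\mathrm{HS}}^2\,J_t(y)$, and differentiate at $t=0$, where $\frac{d}{dt}\big|_{t=0}A_t=DX$ gives the first term and $\frac{d}{dt}\big|_{t=0}J_t=-\mathrm{div}\,X$ gives the second. Your explicit dominated-convergence justification for differentiating under the integral is a point the paper leaves implicit, but it is the same computation.
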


\begin{proof}
We prove (\ref{euler_lagrange_squeeze}) by computing (\ref{def_squeeze}). Note that
\setlength\arraycolsep{1.4pt}
\begin{eqnarray*}
\lno D(f \circ \Phi_t)(x) \rno^2 & = & \sum_{i=1}^Q \| D(f_i \circ \Phi_t)(x) \|_{{\rm{HS}}}^2 \\
& = & \sum_{i=1}^Q \|Df_i(\Phi_t(x)) \circ (D\Phi_t(x)) \|_{{\rm{HS}}}^2 \\
& = & \sum_{i=1}^Q \|Df_i(\Phi_t(x)) \circ (\mathrm{id}_{\R^m} + t DX(x))\|^2_{{\rm{HS}}} \\
& = & \sum_{i=1}^Q \|Df_i(\Phi_t(x)) + t Df_i(\Phi_t(x)) \circ (DX(x))\|^2_{\rm{HS}}.
\end{eqnarray*}
We now change variable $y = \Phi_t(x)$ in (\ref{def_squeeze}):
\setlength\arraycolsep{1.4pt}
\begin{eqnarray*}
&&\int_U \lno D(f\circ \Phi_t)(x) \rno^2 dx\\
& = &\sum_{i = 1}^Q \| Df_i(y) + tDf(y) \circ DX(\Phi_t^{-1}(y))\|^2 |\mathrm{det}(D\Phi_t^{-1}(y)|dy \\
& = & \sum_{i=1}^Q \int_U \|A_i(y) + tB_i(y,t)\|_{{\rm{HS}}}^2 C_i(y,t) dy,
\end{eqnarray*}
where
$$ A_i(y) = Df_i(y), B_i(y,t) = Df(y) \circ DX(\Phi_t^{-1}(y)), C_i(y,t) = |\mathrm{det}(D\Phi_t^{-1}(y)|.$$
It remains to differentiate with respect to $t$:
\setlength\arraycolsep{1.4pt}
\begin{eqnarray*}
&&\|A_i(y) + tB_i(y,t) \|_{{\rm{HS}}}^2 =  \langle A_i(y) + tB_i(y,t), A_i(y) + tB_i(y,t) \rangle \\
& = & \|A_i(y)\|^2_{{\rm{HS}}} + 2t \langle A_i(y), B_i(y,t) \rangle + t^2 \langle B_i(y)\rangle_{{\rm{HS}}}^2 \\
& = & \|A_i(y)\|^2_{{\rm{HS}}} + 2t \langle A_i(y), B_i(y,0) \rangle + o(t),
\end{eqnarray*}
thus
\setlength\arraycolsep{1.4pt}
\begin{eqnarray*}
\frac{d}{dt}\Big|_{t=0}\left\|A_i(y) + tB_i(y)\right\|^2_{{\rm{HS}}}
& = & 2\langle A_i(y), B_i(y,0)\rangle \\
& = & 2 \langle Df_i(y), Df_i(y) \circ DX(y) \rangle.
\end{eqnarray*}
Furthermore, $y = \Phi_t(\Phi_t^{-1}(y))$ so that
$$ \mathrm{id}_{\R^m} = D\Phi_t(\Phi^{-1}_t(y)) \circ D\Phi_t^{-1}(y), $$
and hence
$$ \mathrm{det}\, D\Phi_t^{-1}(y) = \frac{1}{h(y,t)} \qquad \textrm{where}\qquad h(y,t) := \mathrm{det}\, D\Phi_t(\Phi_t^{-1}(y)).$$
It follows that
$$
\frac{d}{dt}\Big|_{t=0}\left(\mathrm{det}\, D\Phi_t^{-1}(y) \right)= - \frac{1}{h(y,0)} \frac{\partial h}{\partial t}(y,0). $$
Clearly $h(0) = 1$. Next,
\setlength\arraycolsep{1.4pt}
\begin{eqnarray*}
\mathrm{det}\, D\Phi_t(\Phi_t^{-1}(y)) & = & \mathrm{det}(\mathrm{id}_{\R^m} + t DX(\Phi_t^{-1}(y)) \\
& = & 1 + t \mathrm{tr}\, DX(\Phi_t^{-1}(y)) + o(t),
\end{eqnarray*}
whence
$$ \frac{\partial h}{\partial t}(y,0) = \mathrm{tr}\, DX(y) = \mathrm{div}\,X(y).$$
Putting together everything, we obtain Equation (\ref{euler_lagrange_squeeze}).
\end{proof}

\begin{Proposition}\label{energy_nondecreasing}
Let $f \in W^1_2(U, \QQl)$ be squeeze stationary and $a \in U$. It follows that the function $ \Theta_a : (0, \mathrm{dist}(a, \partial U)) \to \R_+$ defined by
$$ \Theta_a(r) := \frac{1}{r^{m-2}} \int_{B(a,r)} \lno Df \rno^2 $$
is absolutely continuous and nondecreasing. In fact,
\begin{equation}\label{derivative_theta}
 \Theta_a'(r) = \frac{2}{r^{m-2}} \int_{\partial B(a,r)} \sum_{i=1}^Q \big\| \frac{\partial f_i}{\partial \nu} \big\|^2d\calH^{m-1}.
\end{equation}
In other words, the Radon measure
\begin{equation}\label{monotonic_measure} A \mapsto \int_A \lno Df\rno^2 \end{equation}
is $(m-2)$ monotonic.
\end{Proposition}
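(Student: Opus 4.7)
The strategy is to test the Euler--Lagrange identity \eqref{euler_lagrange_squeeze} against a radial vector field and extract the monotonicity formula by a standard measure-valued limit. Fix $a \in U$, write $s := |x-a|$ and $\nu := (x-a)/s$, and for a smooth compactly supported $\phi : [0,\infty) \to \R$ consider the Lipschitz field $X(x) := \phi(s)(x-a)$. One computes directly
\[
DX(x) = \phi(s)\,\mathrm{id}_{\Rm} + \frac{\phi'(s)}{s}(x-a) \otimes (x-a), \qquad \mathrm{div}\, X(x) = m\phi(s) + s\phi'(s),
\]
and, using $\sum_j (x-a)^j Df_i(e_j) = Df_i(x-a) = s\,\partial_\nu f_i$,
\[
\langle Df_i, Df_i \circ DX \rangle = \phi(s)\,\|Df_i\|_{\rm HS}^2 + s\phi'(s)\,\|\partial_\nu f_i\|^2.
\]
Summing over $i$ and substituting into \eqref{euler_lagrange_squeeze} yields the weak identity
\[
(m-2)\int_U \phi(s) \lno Df \rno^2 dx + \int_U s\phi'(s) \Big( \lno Df\rno^2 - 2\sum_{i=1}^Q \|\partial_\nu f_i\|^2 \Big) dx = 0,
\]
valid for every such $\phi$.

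\textbf{Differentiation and monotonicity.} I would then take $\phi = \phi_\varepsilon$ a family of smooth cutoffs with $\phi_\varepsilon \to \chi_{[0,r]}$ and $-\phi_\varepsilon' \to \delta_r$ as Radon measures on $(0,\infty)$, for any $r \in (0, \mathrm{dist}(a,\partial U))$. Writing $E(r) := \int_{B(a,r)} \lno Df\rno^2$ and $F(r) := \int_{\partial B(a,r)} \sum_{i=1}^Q \|\partial_\nu f_i\|^2 d\calH^{m-1}$, the coarea formula shows that $E$ is absolutely continuous on compact subintervals of $(0, \mathrm{dist}(a,\partial U))$ with $E'(r) = \int_{\partial B(a,r)} \lno Df\rno^2 d\calH^{m-1}$ for a.e.\ $r$, and passing to the limit in the weak identity above gives
\[
(m-2)E(r) = rE'(r) - 2rF(r) \qquad \text{for a.e. } r \in (0, \mathrm{dist}(a,\partial U)).
\]
Differentiating $\Theta_a(r) = r^{2-m} E(r)$ and substituting this identity yields
\[
\Theta_a'(r) = r^{1-m}\big[(2-m) E(r) + r E'(r)\big] = \frac{2}{r^{m-2}} \int_{\partial B(a,r)} \sum_{i=1}^Q \Big\|\frac{\partial f_i}{\partial \nu}\Big\|^2 d\calH^{m-1},
\]
which is precisely \eqref{derivative_theta}. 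Nonnegativity of the right hand side together with absolute continuity of $\Theta_a$ (inherited from $E$ and $r^{2-m}$) gives the monotonicity via the fundamental theorem of calculus, whence the Radon measure \eqref{monotonic_measure} is $(m-2)$ monotonic.

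\textbf{Main obstacle.} The only nontrivial point is justifying the use of the Lipschitz radial field $X$ (equivalently, the passage $\varepsilon \to 0$) in \eqref{euler_lagrange_squeeze}, which was derived for $X \in C^\infty_c(U,\Rm)$. This is routine: both sides of \eqref{euler_lagrange_squeeze} are continuous in $X$ with respect to the $W^{1,\infty}$ norm on fields supported in a fixed compact subset of $U$, so a convolution argument extends the identity to compactly supported Lipschitz fields and hence legitimizes the substitution $X(x) = \phi_\varepsilon(s)(x-a)$. Once this is granted, the coarea representation provides the necessary $L^1$-continuity in the $s$ variable to evaluate the limit of the $\phi'_\varepsilon$-pairing as the boundary term $-r[E'(r) - 2F(r)]$ at Lebesgue-a.e.\ $r$, and the rest of the argument is the elementary calculus above.
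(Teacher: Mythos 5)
Your proof is correct and takes essentially the same route as the paper: testing the squeeze Euler--Lagrange identity \eqref{euler_lagrange_squeeze} with the radial field $\phi(\|x-a\|)(x-a)$, letting $\phi$ approximate $\chi_{[0,r]}$ to obtain $(m-2)E(r)=rE'(r)-2rF(r)$ for a.e.\ $r$, and then differentiating $r^{2-m}E(r)$. The only difference is cosmetic: the paper chooses the radial cutoff constant in a neighborhood of the center, so the test field is already in $C^\infty_c(U,\R^m)$ and your Lipschitz-approximation step (whose stated $W^{1,\infty}$-continuity should really be a.e.\ convergence of gradients plus dominated convergence) is not needed.
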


\begin{proof}
It is clear that $\Theta_a$ is absolutely continuous because the measure in (\ref{monotonic_measure}) is absolutely continuous with respect to the Lebesgue measure $\calL^m$ and $\calL^m(B(a,r+h) \setminus B(a,r)) \leq Ch$.
For simplicity, assume $a = 0$ and write $\Theta(r)$ for $\Theta_a(r)$.
We now plug in equation (\ref{euler_lagrange_squeeze}) a vector field
$$ X(x) = \chi(\|x \|)x,$$ where $\chi \in C^\infty_c(\R)$ is constant in a neighborhood of 0. For every $i,j = 1, \dots, m$ one has
\setlength\arraycolsep{1.4pt}
\begin{eqnarray*}
\frac{\partial}{\partial x_j} \langle X(x), e_i \rangle & = & \frac{\partial}{\partial x_j} \left(\chi(\|x\|) x_i \right) \\
& = & \chi(\|x\|) \delta_{ij} + \chi'(\|x\|) \frac{x_ix_j}{\|x\|}.
\end{eqnarray*}
In particular,
$$ \mathrm{div}\, X(x) = m \chi(\|x\|) + \chi'(\|x\|) \|x\|. $$
We now compute the first term in (\ref{euler_lagrange_squeeze}):
\setlength\arraycolsep{1.4pt}
\begin{eqnarray*}
\sum_{i=1}^Q \langle Df_i(x), Df_i(x) \circ DX(x) \rangle & = & \chi(\|x\|)
\sum_{i=1}^Q \| Df_i(x)\|_{{\rm{HS}}}^2 \\
& & + \chi'(\|x\|) \sum_{i=1}^Q \big\langle Df_i(x), Df_i(x) \circ \big(x \otimes \frac{x}{\|x\|} \big) \big\rangle \\
& = & \chi(\|x\|) \lno Df(x)\rno^2 \\
& & + \chi'(\|x\|) \|x\| \sum_{i=1}^Q \langle Df_i(x), Df_i(x) \circ A\rangle,
\end{eqnarray*}
where $A$ is the matrix $u_1 \otimes u_1$ for $u_1 := x / \|x\| \in \mathbb{S}^{m-1}$, \emph{i.e.} $A_{ij} = \langle u_1,e_i \rangle \langle u_1,e_j \rangle $. One easily sees that $Au_1 = u_1$ and that $Av = 0$ whenever $\langle u_1,v \rangle = 0$. Now we complete $u_1$ to an orthonormal basis $(u_1, \dots, u_m)$. It follows that
\setlength\arraycolsep{1.4pt}
\begin{eqnarray*}
\langle Df_i(x), Df_i(x) \circ A \rangle & = & \sum_{j=1}^m \langle Df_i(x)(u_j), Df_i(x)(A(u_j)) \rangle \\
& = & \langle Df_i(x)(u_1), Df_i(x)(u_1) \rangle \\
& = & \left\| \frac{\partial f_i}{\partial \nu} (x) \right\|^2.
\end{eqnarray*}
We infer from (\ref{euler_lagrange_squeeze}) that
$$
 2\int_U \big(  \lno Df(x)\rno^2 \chi(\|x\|) + \chi'(\|x\|) \|x\| \sum_{i=1}^Q \big\| \frac{\partial f_i}{\partial \nu} (x) \big\|^2\big) dx $$
 $$ - \int_U \big(\lno Df(x) \rno^2 \big(m\chi(\|x\| + \|x\| \chi'(\|x\|) \big) \big) dx = 0,
$$
that is,
$$ (2-m) \int_U \chi(\|x\|) \lno Df(x) \rno^2 dx + 2 \int_U \chi'(\|x\|)\|x\| \sum_{i=1}^Q \big\| \frac{\partial f_i}{\partial \nu} (x) \big\|^2 dx
$$
$$
= \int_U \chi'(\|x\|) \|x\| \lno Df(x)\rno^2 dx.
$$
Now we fix $r \in (0, \mathrm{dist}(0, \partial U)$ and we let $\{\chi_j\}_{j=1}^\infty$ approach $\ind_{[0,r]}$ so that
$$ (2-m) \int_{B(0,r)} \lno Df \rno^2 - 2r \int_{\partial B(0,r)} \sum_{i=1}^Q
\big\| \frac{\partial f_i}{\partial \nu} \big\|^2 d\calH^{m-1}
=-r\int_{\partial B(0,r)} \lno Df \rno^2,$$
and one finally computes that for a.e $r \in (0, \mathrm{dist}(0, \partial U))$,
\setlength\arraycolsep{1.4pt}
\begin{eqnarray*}
\Theta'(r) & = &
(2-m)r^{2-m-1} \int_{B(0,r)} \lno Df \rno^2 + r^{2-m} \int_{\partial B(0,r)} \lno Df \rno^2 \\
& = & 2r^{2-m} \int_{\partial B(0,r)} \sum_{i=1}^Q
\big\| \frac{\partial f_i}{\partial \nu} \big\|^2d\calH^{m-1} \geq 0.
\end{eqnarray*}
\end{proof}

\section{Squash variations}

Here we consider vertical variations above $x$ whose amplitude depend on $x \in U$. Let
$$
Y : U \times \ell_2 \to \ell_2
$$
be a $C^1$ map such that $U \cap \{x : Y(x, \cdot) \neq 0\}$ is relatively compact in $U$. For $f : U \to \QQl$ we define
$$
(Y \Box f)_t(x) := \bigoplus_{i=1}^Q \lseg f_i(x) + t Y(x, f_i(x)) \rseg.
$$
We say that $f$ is squash stationary if for every such $Y$ one has
$$
\frac{d}{dt}\Big|_{t=0} \int_U \lno D(Y \Box f)_t \rno^2 = 0.
$$
\begin{Proposition}
A map $f \in W_2^1(U, \QQl)$ is squash stationary if and only if for all $Y$,
\begin{equation}\label{euler-lagragne-squash}
\sum_{i=1}^Q \int_U \langle Df_i(x), D_xY(x,f_i(x)) \rangle
+ \sum_{i=1}^Q \int_U \langle Df_i(x), D_yY(x,f_i(x)) \circ Df_i(x) \rangle dx = 0.
\end{equation}
\end{Proposition}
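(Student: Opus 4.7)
My plan is to mirror the proof of the squeeze-stationary characterization: I would expand the pointwise integrand $\lno D(Y\Box f)_t(x)\rno^2$ as a polynomial in $t$, differentiate at $t=0$, and justify the interchange of differentiation and integration. The situation here is in fact a bit simpler than the squeeze case because no change of variables in the domain is involved---the perturbation acts only on the branches of the target.

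First, at a point $x$ of approximate differentiability of $f$, pick a local measurable selection so that $f(x) = \oplus_{i=1}^Q \lseg f_i(x) \rseg$ with each $f_i$ approximately differentiable at $x$. The $i$-th branch of $(Y\Box f)_t$ is then $f_i(x) + t Y(x,f_i(x))$, and the chain rule (applicable since $Y$ is $C^1$) yields
\begin{equation*}
D[(Y\Box f)_t]_i(x) = Df_i(x) + t \bigl( D_xY(x,f_i(x)) + D_yY(x,f_i(x))\circ Df_i(x) \bigr).
\end{equation*}
Taking Hilbert--Schmidt norms squared and expanding to first order gives
\begin{equation*}
\| D[(Y\Box f)_t]_i(x) \|_{{\rm HS}}^2 = \|Df_i(x)\|_{{\rm HS}}^2 + 2t \bigl\langle Df_i(x),\, D_xY(x,f_i(x)) + D_yY(x,f_i(x))\circ Df_i(x) \bigr\rangle + O(t^2).
\end{equation*}
Summing over $i$, integrating over $U$, and differentiating at $t=0$ yields (\ref{euler-lagragne-squash}) up to a harmless overall factor of $2$.

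To justify exchanging $\frac{d}{dt}$ with $\int_U$ I would invoke dominated convergence. Since $Y$ is $C^1$ and $U \cap \{x : Y(x,\cdot) \neq 0\}$ is relatively compact in $U$, the sup norms of $D_xY$ and $D_yY$ on the relevant compact product set are finite, so the difference quotient of the integrand is dominated by a constant multiple of $\lno Df\rno^2 + \lno Df\rno$, which lies in $L^1(U)$. The converse direction is immediate, since the vanishing of (\ref{euler-lagragne-squash}) for every admissible $Y$ is precisely the statement that the $t$-derivative of the energy at $0$ vanishes for every such $Y$.

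The main technical obstacle I anticipate is a rigorous justification of the chain rule for the approximate differential of the $Q$-valued composition $x \mapsto Y(x,f(x))$ at almost every $x \in U$: this should follow from the Rademacher-type theorem and composition properties of Sobolev multi-valued maps established in \cite{BDPG}, but requires some care at those points where two branches of $f$ coincide and the local measurable selection is not unique. Once this is set up, the remaining calculations are routine termwise expansions.
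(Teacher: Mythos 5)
Your proposal follows essentially the same route as the paper: expand $\lno D(Y\Box f)_t\rno^2$ branchwise via the chain rule, isolate the first-order term in $t$, integrate, and differentiate at $t=0$, with the converse immediate. The extra care you flag (dominated convergence and the chain rule for approximate differentials of $Q$-valued Sobolev maps from \cite{BDPG}) is a legitimate filling-in of details the paper leaves implicit, not a different argument.
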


\begin{proof}
The derivation of this Euler-Lagrange equation is much simpler than for squeeze variations. One computes
\setlength\arraycolsep{1.4pt}
\begin{eqnarray*}
&&\lno D(Y \Box f)_t \rno^2 = 
\sum_{i=1}^Q \|D(f_i + tY( \cdot, f_i(\cdot)) \|_{{\rm{HS}}}^2 \\
&& =  \lno Df \rno^2 + 2t\sum_{i=1}^Q \langle Df_i, D_xY(\cdot, f_i(\cdot)) + D_yY(\cdot, f_i(\cdot)) \circ Df_i \rangle + o(t).
\end{eqnarray*}
Integrating over $U$ and then differentiating at $t = 0$ gives equation (\ref{euler-lagragne-squash}).
\end{proof}

\begin{Corollary}
If $f \in W_2^1(U, \QQl)$ is squash stationary and $B(a,r) \subset U$ then
\begin{equation}\label{cor_squash}
\int_{B(a,r)} \lno Df \rno^2 = \int_{\partial B(a,r)} \sum_{i=1}^Q \big\langle
\frac{\partial f_i}{\partial \nu}, f_i\big\rangle d\calH^{m-1}.
\end{equation}
\end{Corollary}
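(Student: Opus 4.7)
The plan is to feed a well-chosen vector field $Y$ into the Euler--Lagrange identity \eqref{euler-lagragne-squash} and then let it degenerate into the indicator of $B(a,r)$. The natural candidate is the radial dilation
$$ Y(x,y) := \chi(\|x-a\|)\, y, $$
where $\chi \in C^\infty_c([0,\infty))$ will be constant near $0$ (to ensure $Y$ is $C^1$ across $x=a$, since $\|x-a\|$ is not smooth there) and supported so that $\overline{B(a,\rmsupp\chi)}\subset U$. Then $Y$ is admissible in the definition of squash stationarity.

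First I would compute the two partial derivatives of $Y$. Writing $\nu(x):=(x-a)/\|x-a\|$, one has $D_yY(x,y) = \chi(\|x-a\|)\,\mathrm{id}_{\ell_2}$ and, for $v\in\R^m$,
$$ D_xY(x,y)(v) = \chi'(\|x-a\|)\,\langle \nu(x), v\rangle\, y. $$
Plugging this into \eqref{euler-lagragne-squash} and evaluating each scalar product in an orthonormal basis $e_1,\dots,e_m$ of $\R^m$ yields
$$ \sum_{i=1}^Q\langle Df_i(x), D_xY(x,f_i(x))\rangle = \chi'(\|x-a\|)\sum_{i=1}^Q\langle Df_i(x)(\nu(x)), f_i(x)\rangle, $$
while
$$ \sum_{i=1}^Q\langle Df_i(x), D_yY(x,f_i(x))\circ Df_i(x)\rangle = \chi(\|x-a\|)\,\lno Df(x)\rno^2. $$
So the Euler--Lagrange identity becomes
\begin{equation}\label{plan_EL}
\int_U \chi(\|x-a\|)\,\lno Df(x)\rno^2\,dx + \int_U \chi'(\|x-a\|)\sum_{i=1}^Q\langle Df_i(x)(\nu(x)), f_i(x)\rangle\,dx = 0.
\end{equation}

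Next I would take a sequence $\chi_k \in C^\infty_c([0,\infty))$ with $0\leq\chi_k\leq 1$, $\chi_k\equiv 1$ on $[0,r-1/k]$, $\chi_k\equiv 0$ on $[r+1/k,\infty)$, and monotone on $[r-1/k,r+1/k]$, chosen so that $\chi_k\to\ind_{[0,r)}$ pointwise and $\chi_k'\,dt$ converges weakly to $-\delta_r$. Then by dominated convergence,
$$ \int_U \chi_k(\|x-a\|)\lno Df\rno^2\,dx \longrightarrow \int_{B(a,r)}\lno Df\rno^2. $$
For the second term I would co-area slice in the radial direction: writing $g_i(\rho,\omega) := \langle Df_i(a+\rho\omega)(\omega), f_i(a+\rho\omega)\rangle$ for $\omega\in\mathbb{S}^{m-1}$ and using $\partial f_i/\partial\nu = Df_i(\nu)$, we have
$$ \int_U \chi_k'(\|x-a\|)\sum_{i=1}^Q\langle Df_i(x)(\nu), f_i(x)\rangle\,dx = \int_0^\infty \chi_k'(\rho)\,\rho^{m-1}\!\int_{\mathbb{S}^{m-1}}\sum_i g_i(\rho,\omega)\,d\calH^{m-1}(\omega)\,d\rho, $$
which tends to $-r^{m-1}\int_{\mathbb{S}^{m-1}}\sum_i g_i(r,\omega)\,d\calH^{m-1}(\omega) = -\int_{\partial B(a,r)}\sum_i\langle\partial f_i/\partial\nu,f_i\rangle\,d\calH^{m-1}$. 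Substituting these two limits into \eqref{plan_EL} gives precisely \eqref{cor_squash}.

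The only genuine obstacle is justifying the limit in the boundary integral for \emph{every} $r$ with $B(a,r)\Subset U$, since a priori the inner integral $F(\rho):=\rho^{m-1}\int_{\mathbb{S}^{m-1}}\sum_i g_i(\rho,\omega)\,d\calH^{m-1}(\omega)$ is only defined for a.e.\ $\rho$ and is merely $L^1$ on a compact subinterval of $(0,\mathrm{dist}(a,\partial U))$. Thus the standard approximation argument gives \eqref{cor_squash} for a.e.\ $r$; if one wants the equality for every admissible $r$, one argues that both sides of \eqref{cor_squash} are continuous in $r$ (the left side trivially, the right side by the absolute continuity established in Proposition \ref{energy_nondecreasing} together with the fact that the right side is, up to the Dirichlet energy boundary contribution, a classical distributional derivative of an $r$-integral of an $L^1$ function), which extends the identity to every $r$ with $B(a,r)\subset U$.
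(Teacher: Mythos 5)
Your proposal is correct and follows essentially the same route as the paper: plug $Y(x,y)=\chi(\|x-a\|)y$ into \eqref{euler-lagragne-squash}, compute $D_xY$ and $D_yY$, and let $\chi$ approximate $\ind_{[0,r]}$. Your extra remark about the limit being valid a priori only for a.e.\ $r$ is a point the paper silently glosses over, so flagging it (and resolving it via continuity in $r$) is a welcome refinement rather than a deviation.
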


\begin{proof} Assume for simplicity that $a = 0$.
Let $\chi \in C^\infty_c(\R)$ be a function which is constant in a neighborhood of $0$. We will plug
$$ Y(x,y) := \chi(\|x\|) y $$
into equation (\ref{euler-lagragne-squash}). First we compute
$$ D_xY(x,y) = \chi'(\|x\|) \frac{x}{\|x\|} \otimes y, \qquad D_yY(x,y) = \chi(\|x\|) \mathrm{id}_{\R^m}.$$
Thus we have
$$
\sum_{i=1}^Q \int_U \chi'(\|x\|) \big\langle \frac{\partial f_i}{\partial \nu}(x), f_i(x) \big\rangle + \int_U \chi(\|x\|) \lno Df(x) \rno^2 dx = 0.
$$
Now, we fix $r \in (0, \mathrm{dist}(0, \partial U))$ and we let $\{\chi_j\}_{j=1}^\infty$ be an approximation of $\ind_{[0,r]}$, so that
$$
- \sum_{i=1}^Q \int_{\partial B(0,r)} \big\langle \frac{\partial f_i}{\partial \nu}, f_i\big\rangle
d\calH^{m-1}+ \int_{B(0,r)} \lno Df \rno^2 = 0.
$$
\end{proof}

\section{Frequency function}

Let $f \in W_2^1(U, \QQl)$. For $a \in U \subset \R^m$ and $r \in (0, \mathrm{dist}(a, \partial U)$, we define the following quantities
$$
D_a(r) := \int_{B(a,r)} \lno Df \rno^2, \qquad H_a(r) := \int_{\partial B(a,r)} |f|^2, \qquad N_a(r) := \frac{rD_a(r)}{H_a(r)},
$$
where the last quantity, $N_a(r)$, is defined provided that $H_a(r) \neq 0$. $N_a(r)$ is called the \emph{frequency function} at the point $a$.  When $a=0$, we simply denote $D(r)$, $H(r)$, $N(r)$
for $D_0(r)$, $H_0(r)$, $N_0(r)$ respectively.

The following lemma ensures that if $f$ is squash stationary and non zero in a neighborhood of $a$, then $N_a(r)$ is defined for small values of $r$.

\begin{Lemma}
If $f \in W^{1}_2(U, \QQl)$ is squash stationary and if $H_a(r_0) = 0$ for some $a\in U$
and $r_0 \in (0, \mathrm{dist}(a, \partial U))$, then $f \equiv Q \lseg 0 \rseg $ on $B(a, r_0)$.
\end{Lemma}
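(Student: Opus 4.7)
Without loss of generality assume $a=0$. The plan is to show that under the hypothesis the function $r \mapsto r^{1-m} H_0(r)$ is nondecreasing on $(0,r_0]$, vanishes at $r_0$, and is obviously nonnegative, so it must vanish identically on $(0,r_0]$; then a Fubini argument will conclude $f \equiv Q\lseg 0 \rseg$ on $B(0,r_0)$.

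The key identity is the squash formula \eqref{cor_squash}, which for every $r \in (0,r_0]$ gives
$$
D_0(r) = \int_{\partial B(0,r)} \sum_{i=1}^Q \big\langle \tfrac{\partial f_i}{\partial \nu}, f_i \big\rangle \, d\calH^{m-1}.
$$
Recognising that $\sum_{i=1}^Q \langle \partial_\nu f_i, f_i \rangle = \tfrac{1}{2} \partial_\nu |f|^2$ (the quantity $|f|^2 = \sum_i \|f_i\|^2$ is a well-defined symmetric function of the selections, so its normal derivative along $\partial B(0,r)$ makes unambiguous sense a.e.), and rescaling via $H_0(r) = r^{m-1} h(r)$ with $h(r) := \int_{\mathbb{S}^{m-1}} |f(r\theta)|^2 d\calH^{m-1}(\theta)$, one computes
$$
h'(r) = 2 r^{1-m} \int_{\partial B(0,r)} \sum_{i=1}^Q \langle f_i, \partial_\nu f_i \rangle \, d\calH^{m-1} = 2 r^{1-m} D_0(r) \geq 0 .
$$

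Therefore $h$ is absolutely continuous and nondecreasing on $(0,r_0]$. Since $h(r_0) = r_0^{1-m} H_0(r_0) = 0$ and $h \geq 0$, we conclude $h \equiv 0$ on $(0,r_0]$, and hence $H_0(r) = 0$ for every $r \in (0,r_0]$. In particular, for a.e.\ $r \in (0,r_0]$ one has $|f|=0$ a.e.\ on $\partial B(0,r)$, i.e.\ $f = Q\lseg 0 \rseg$ a.e.\ on $\partial B(0,r)$. By Fubini's theorem this gives $f = Q\lseg 0 \rseg$ almost everywhere on $B(0,r_0)$, which is the desired conclusion.

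The only point requiring care is the passage from the Sobolev regularity of $f$ to the pointwise scalar identity $h'(r) = 2 r^{1-m} D_0(r)$. One justifies this by approximating $|f|^2$ in $W^{1,1}$, or alternatively by using exactly the same plug-in strategy as in the proof of the Corollary: inserting $Y(x,y)=\chi(\|x\|)y$ with $\chi \to \ind_{[0,r]}$ into \eqref{euler-lagragne-squash} yields the absolute continuity and the formula for $h'$ simultaneously, and this is the only mildly technical step in the argument.
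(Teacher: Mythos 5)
Your proof is correct, but it takes a genuinely different (and longer) route than the paper's. The paper's own argument is applied once, at the single radius $r_0$: since $H_a(r_0)=0$, the values of $f$ vanish $\calH^{m-1}$-a.e.\ on $\partial B(a,r_0)$, so the boundary integrand in \eqref{cor_squash} vanishes and hence $\int_{B(a,r_0)}\lno Df\rno^2=0$; a $Q$-valued Sobolev map with vanishing differential is a.e.\ constant on the ball, and the constant must be $Q\lseg 0\rseg$ because the boundary values vanish. You instead use \eqref{cor_squash} at \emph{every} radius to derive $\frac{d}{dr}\bigl(r^{1-m}H_a(r)\bigr)=2r^{1-m}D_a(r)\ge 0$ -- which is exactly the content of the paper's later identity \eqref{derivative_H}, rewritten -- and then conclude by monotonicity that $H_a\equiv 0$ on $(0,r_0]$, so $f=Q\lseg 0\rseg$ a.e.\ on $B(a,r_0)$ by Fubini. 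What your route buys: you never need the (mildly nontrivial, for $Q$-valued maps) step that $\lno Df\rno=0$ forces $f$ to be a.e.\ constant, nor the identification of that constant via the trace. What it costs: you must justify the absolute continuity of $H_a$ and the chain-rule identity $\partial_\nu|f|^2=2\sum_i\langle f_i,\partial_\nu f_i\rangle$ for Sobolev $Q$-valued maps, which is precisely the technical work the paper performs in the two lemmas immediately following this one; since their proofs rely only on \eqref{cor_squash} and not on the present lemma, there is no circularity in invoking them, and your closing remark (approximation, or plugging $Y(x,y)=\chi(\|x\|)y$ into \eqref{euler-lagragne-squash}) is indeed the right way to discharge this step.
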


\begin{proof}
This is a consequence of (\ref{cor_squash}): if $f \equiv 0$ a.e on $\partial B(a,r)$ then $\calE_2(f, B(a,r)) = 0$, thus $f$ vanishes on $B(a,r)$.
\end{proof}

\begin{Lemma}
For $a\in U$, $H_a(r)$ is absolutely continuous in $r$, for any $f \in W^1_2(U, \QQl)$.
\end{Lemma}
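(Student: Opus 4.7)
The plan is to reduce to the one-dimensional fundamental theorem of calculus along rays emanating from $a$, using polar coordinates, and then apply Fubini to pass from almost-everywhere statements on rays to an absolutely continuous representation of $H_a$.

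First I would pass to polar coordinates centered at $a$ and write
\begin{equation*}
H_a(r) = r^{m-1} \int_{\mathbb{S}^{m-1}} |f(a+r\theta)|^2 \, d\calH^{m-1}(\theta),
\end{equation*}
so it suffices to prove absolute continuity of $r \mapsto r^{m-1} h(r)$ where $h(r)$ denotes the spherical integral. The real-valued function $x \mapsto |f(x)|^2 = \calG(f(x), Q\lseg 0\rseg)^2$ is well-defined and, since $\calG(\cdot, Q\lseg 0 \rseg)$ is $1$-Lipschitz on $\QQl$, the map $|f|$ belongs to $W^{1,2}_{\mathrm{loc}}(U)$ by the chain-rule properties of Sobolev multiple-valued maps (see \cite{BDPG}); in particular $|f|^2 \in W^{1,1}_{\mathrm{loc}}(U)$ with weak gradient controlled pointwise a.e.\ by $2 |f(x)| \lno Df(x) \rno$.

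Next I would invoke the ACL characterization of Sobolev functions in polar coordinates: for $\calH^{m-1}$-almost every $\theta \in \mathbb{S}^{m-1}$, the restriction $r \mapsto f(a + r\theta)$ is an element of $W^{1,2}$ on its interval of definition, the radial approximate derivative $\partial_\nu f_i(a+r\theta)$ agrees with the classical derivative for a.e.\ $r$, and hence $r \mapsto |f(a+r\theta)|^2$ is absolutely continuous with
\begin{equation*}
\frac{d}{dr} |f(a+r\theta)|^2 = 2 \sum_{i=1}^Q \big\langle f_i(a+r\theta), \tfrac{\partial f_i}{\partial \nu}(a+r\theta) \big\rangle
\end{equation*}
at a.e.\ $r$, for a suitable choice of sheet labellings along the ray. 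Multiplying by $r^{m-1}$ and applying the fundamental theorem on each ray then yields, after integrating over $\theta$ and using Fubini, the identity
\begin{equation*}
H_a(r_2) - H_a(r_1) = \int_{r_1}^{r_2} \Big( \tfrac{m-1}{r} H_a(r) + 2 \int_{\partial B(a,r)} \sum_{i=1}^{Q} \big\langle f_i, \tfrac{\partial f_i}{\partial \nu}\big\rangle d\calH^{m-1} \Big) dr
\end{equation*}
for $0 < r_1 < r_2 < \mathrm{dist}(a, \partial U)$.

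Finally I would check that the integrand in $r$ above is in $L^1_{\mathrm{loc}}$: by Cauchy--Schwarz it is bounded by $\tfrac{m-1}{r} H_a(r) + 2\int_{\partial B(a,r)} |f|\lno Df\rno\, d\calH^{m-1}$, and integrating over any compact subinterval $[r_1, r_2]$ one applies Fubini with coarea to dominate it by a constant times $\|f\|_{L^2(B(a,r_2))}^2 + \|f\|_{L^2(B(a,r_2))}\calE_2(f, B(a,r_2))$, both of which are finite. This yields the absolute continuity of $H_a$. The only subtlety — and the step I would treat most carefully — is the multi-valued chain rule ensuring that $|f|^2$ has the stated derivative along a.e.\ ray; it bypasses any global ordering of sheets because $|f|^2 = \sum_i \|f_i\|^2$ is symmetric, so it can be evaluated using any measurable local selection along the ray, which is guaranteed by the Sobolev structure of $f$.
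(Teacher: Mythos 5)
Your argument is correct, and it reaches the conclusion by a somewhat different mechanism than the paper. The paper writes $H_a(r)=\frac{d}{dr}\int_{B(a,r)}|f|^2$, rescales the volume integral to $r^m\int_B |f(a+rx)|^2\,dx$, and differentiates under the integral sign, thereby identifying $H_a(r)$ with an explicit expression of the form $\frac{m}{r}\int_{B(a,r)}|f|^2+2\int_{B(a,r)}(\cdots)$, i.e.\ smooth functions of $r$ times integrals over $B(a,r)$ of fixed $L^1$ functions, whose absolute continuity in $r$ is immediate. You instead work in polar coordinates, invoke the ACL property of $|f|^2\in W^{1,1}_{\rm loc}$ along a.e.\ ray, apply the fundamental theorem of calculus ray by ray to $r\mapsto r^{m-1}|f(a+r\theta)|^2$, and integrate in $\theta$ via Fubini, which gives absolute continuity in its integral form $H_a(r_2)-H_a(r_1)=\int_{r_1}^{r_2}\bigl(\frac{m-1}{r}H_a(r)+2\int_{\partial B(a,r)}\sum_i\langle f_i,\frac{\partial f_i}{\partial\nu}\rangle\,d\calH^{m-1}\bigr)dr$. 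What your route buys is that it simultaneously produces the differentiation formula for $H_a$ (the pre-stationarity version of \eqref{derivative_H}, before \eqref{cor_squash} is used to replace the boundary term by $2D_a(r)$), whereas the paper's route is shorter and avoids ray-wise selection issues by staying with ball integrals. Both arguments rest on the same nontrivial ingredient from the $Q$-valued Sobolev theory of \cite{BDPG}: that $\frac{\partial}{\partial r}|f(a+r\theta)|^2=2\sum_{i=1}^Q\langle f_i,\frac{\partial f_i}{\partial\nu}\rangle$ a.e.\ (a chain rule, which in your version is justified by one-dimensional Sobolev selections along a.e.\ ray, and in the paper's version is used implicitly when differentiating under the integral), and both share the mild caveat about working with the precise representative of $f$ so that the spherical integrals defining $H_a(r)$ agree with the traces for every $r$. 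Your integrability check via Cauchy--Schwarz and the coarea formula is the right way to justify the Fubini step, so I see no genuine gap.
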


\begin{proof} For simplicity, assume $a=0\in U$.  Then
\setlength\arraycolsep{1.4pt}
\begin{eqnarray*}
H(r) & = & \frac{d}{dr} \int_{ B(0,r)} |f(x)|^2 dx \nonumber \\
& = & \frac{d}{dr} \left(r^{m} \int_{ B} |f(rx)|^2 dx \right) \nonumber \\
& = & m r^{m-1} \int_{B} |f(rx)|^2 dx + 2 r^{m} \int_{B} \sum_{i=1}^Q \big\langle f_i(rx), \frac{\partial f_i}{\partial \nu}(rx) \big\rangle dx \nonumber \\
& = & \frac{m }{r} \int_{ B(0,r)} |f|^2 + 2 \sum_{i=1}^Q \int_{ B(0,r)} \big\langle f_i, \frac{\partial f_i}{\partial \nu} \big\rangle,
\end{eqnarray*}
and the last expression is easily seen to be absolutely continuous.
\end{proof}

\begin{Lemma}
If $f$ is squash stationary then for any $a\in U$ and a.e. $r \in (0, \mathrm{dist}(a, \partial U))$,
\begin{equation}\label{derivative_H}
H_a'(r) = \frac{m-1}{r} H_a(r) + 2D_a(r).
\end{equation}
\end{Lemma}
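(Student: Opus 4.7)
The plan is to compute $H_a'(r)$ by direct differentiation in polar coordinates, and then to invoke the squash identity \eqref{cor_squash} to reshape the resulting boundary term into $2D_a(r)$. For notational simplicity I would take $a=0$; the general case is identical after translation.

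First I would rewrite the sphere integral on the fixed unit sphere via the change of variables $y=r\theta$,
$$ H(r) = r^{m-1} \int_{\partial B} |f(r\theta)|^2 \, d\calH^{m-1}(\theta), $$
and then differentiate in $r$ under the integral sign. Using that $\tfrac{d}{dr}|f(r\theta)|^2 = 2 \sum_i \langle f_i, \partial f_i/\partial \nu\rangle$ at $r\theta$, since $\theta$ is the outward unit normal at $r\theta$, I would obtain for a.e.\ $r$
$$ H'(r) = (m-1)\,r^{m-2} \int_{\partial B} |f(r\theta)|^2 \, d\calH^{m-1}(\theta) + 2\, r^{m-1} \int_{\partial B} \sum_{i=1}^{Q} \big\langle f_i(r\theta), \tfrac{\partial f_i}{\partial \nu}(r\theta)\big\rangle \, d\calH^{m-1}(\theta). $$
Rescaling each integrand back to $\partial B(0,r)$ yields
$$ H'(r) = \frac{m-1}{r} H(r) + 2 \int_{\partial B(0,r)} \sum_{i=1}^{Q} \big\langle f_i, \tfrac{\partial f_i}{\partial \nu}\big\rangle \, d\calH^{m-1}, $$
and the squash stationarity corollary \eqref{cor_squash} identifies the last integral with $D(r)$, delivering the claimed formula.

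The main technical obstacle is the justification of the differentiation under the integral and of the pointwise identity $\tfrac{d}{dr} |f(r\theta)|^2 = 2 \sum_i \langle f_i(r\theta), \tfrac{\partial f_i}{\partial \nu}(r\theta)\rangle$ for a.e.\ $r$ and $\calH^{m-1}$-a.e.\ $\theta$. One has to observe that $|f|^2$ and $\sum_i \langle f_i, \partial f_i/\partial\nu\rangle$, although defined through local selections of $f$ only modulo the $S_Q$-action, descend to genuine $L^1_{\rmloc}$ scalar functions on $U$, and to invoke a Sobolev slicing theorem to restrict $f$ to $W^{1,2}(\partial B(0,r))$ for almost every $r$. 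Combined with the absolute continuity of $H$ established in the preceding lemma, this legitimizes the fundamental theorem of calculus step and identifies the distributional and pointwise derivatives of $H$.
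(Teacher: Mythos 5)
Your proposal follows exactly the paper's argument: rescale $H$ to the fixed unit sphere, differentiate under the integral to produce the term $\tfrac{m-1}{r}H(r)$ plus twice the boundary integral of $\sum_i \langle f_i, \partial f_i/\partial\nu\rangle$, and then invoke the squash identity \eqref{cor_squash} to replace that boundary integral by $D_a(r)$. The extra remarks you make on justifying the differentiation (slicing and the absolute continuity of $H$ from the preceding lemma) are consistent with, and if anything slightly more careful than, the paper's brief computation.
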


\begin{proof} For simplicity, assume $a=0\in U$. Then
we have
\setlength\arraycolsep{1.4pt}
\begin{eqnarray*}
H'(r) & = & \frac{d}{dr} \int_{\partial B} r^{m-1} |f(rx)|^2 d\calH^{m-1}x \\
& = & \frac{m-1}{r} \int_{\partial B(0,1)} r^{m-1} |f(rx)|^2 d\calH^{m-1}x \\
&&+ 2 r^{m-1}\int_{\partial B(0,r)} \sum_{i=1}^Q \big\langle f_i(rx), \frac{\partial f_i}{\partial \nu} (rx) \big\rangle d\calH^{m-1}x \\
& = & \frac{m-1}{r} \int_{\partial B(0,r)} |f|^2d\calH^{m-1} + 2 \int_{\partial B(0,r)} \sum_{i=1}^Q
\big\langle f_i, \frac{\partial f_i}{\partial \nu} \big\rangle d\calH^{m-1}.
\end{eqnarray*}
Now using (\ref{cor_squash}) we prove (\ref{derivative_H}).
\end{proof}

\begin{Theorem} \label{monotonic_frequency}
If $f$ is squeeze and squash stationary and  for $a\in U$, $H_a(r_0) \neq 0$ for some $r_0\in (0, \mathrm{dist}(a, \partial U))$, then $N_a'(r) \geq 0$ for a.e. $r \leq r_0$.
\end{Theorem}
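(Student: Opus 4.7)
The plan is to compute $N_a'$ via logarithmic differentiation and show the result is nonnegative via a single Cauchy--Schwarz inequality. On the (open) subinterval of $(0, r_0]$ where $H_a(r) > 0$ and $D_a(r) > 0$ (elsewhere $N_a \equiv 0$ trivially, since $H_a$ is nondecreasing by \eqref{derivative_H} and $D_a = 0$ forces $f$ locally constant), we have $N_a > 0$ and may write $\log N_a(r) = \log r + \log D_a(r) - \log H_a(r)$. The formula $H_a'(r) = \tfrac{m-1}{r} H_a(r) + 2 D_a(r)$ is already at hand from \eqref{derivative_H}. For $D_a'$, I would unwind the squeeze-stationary derivative \eqref{derivative_theta}: setting
\[ F_a(r) := \int_{\partial B(a,r)} \sum_{i=1}^Q \Big\|\tfrac{\partial f_i}{\partial \nu}\Big\|^2 \, d\calH^{m-1} \]
and differentiating $D_a(r) = r^{m-2} \Theta_a(r)$, Proposition~\ref{energy_nondecreasing} yields $D_a'(r) = \tfrac{m-2}{r} D_a(r) + 2 F_a(r)$. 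Substituting both expressions, the $1/r$ contributions cancel and one obtains
\[ \frac{N_a'(r)}{N_a(r)} = 2 \left( \frac{F_a(r)}{D_a(r)} - \frac{D_a(r)}{H_a(r)} \right) = \frac{2\bigl( F_a(r) H_a(r) - D_a(r)^2 \bigr)}{D_a(r) H_a(r)}. \]

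The remaining inequality $F_a(r) H_a(r) \geq D_a(r)^2$ is precisely where squash stationarity is used. Corollary~\ref{cor_squash} yields
\[ D_a(r) = \int_{\partial B(a,r)} \sum_{i=1}^Q \Big\langle \tfrac{\partial f_i}{\partial \nu}, f_i \Big\rangle \, d\calH^{m-1}, \]
so viewing $(f_1,\ldots,f_Q)$ and $(\partial f_1/\partial \nu, \ldots, \partial f_Q/\partial \nu)$ as vector-valued functions on $\partial B(a,r)$ with values in $\ell_2^Q$, Cauchy--Schwarz applied first pointwise to the pairing $\sum_i \langle u_i, v_i\rangle$ on $\ell_2^Q$ and then on $L^2(\partial B(a,r))$ gives exactly $D_a(r)^2 \leq H_a(r) F_a(r)$, finishing the proof.

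I do not expect a substantive obstacle. The only delicate point is bookkeeping: the individual sheets $f_i$ and $\partial f_i/\partial \nu$ are not globally well defined on $\partial B(a,r)$ but only modulo the $S_Q$ action, yet the three integrands $\sum_i \|f_i\|^2$, $\sum_i \langle f_i, \partial f_i/\partial \nu\rangle$ and $\sum_i \|\partial f_i/\partial \nu\|^2$ that enter the computation are all symmetric in the $Q$ sheets, hence unambiguously defined a.e., and a local measurable selection of sheets (available a.e.\ since $f \in W^1_2$) legitimizes the pointwise application of Cauchy--Schwarz in $\ell_2^Q$.
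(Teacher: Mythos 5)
Your proposal is correct and follows essentially the same route as the paper: it computes $N_a'$ from the identities $H_a'(r)=\tfrac{m-1}{r}H_a(r)+2D_a(r)$ and $\Theta_a'(r)=\tfrac{2}{r^{m-2}}\int_{\partial B(a,r)}\sum_i\|\partial f_i/\partial\nu\|^2$, rewrites $D_a(r)$ via the squash identity \eqref{cor_squash}, and concludes by Cauchy--Schwarz, exactly as in the paper's proof (your logarithmic differentiation versus the paper's direct quotient rule is only a cosmetic difference). Your extra remarks on the degenerate sets where $H_a$ or $D_a$ vanish are a harmless refinement the paper leaves implicit.
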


\begin{proof} For simplicity, assume $a=0\in U$.
Select $r \in (0,r_0]$ at which $D$ and $H$ are differentiable. Then, according to (\ref{derivative_H}), (\ref{derivative_theta}) and (\ref{cor_squash}),
\setlength\arraycolsep{1.4pt}
\begin{eqnarray*}
N'(r) & = &
\frac{D(r)}{H(r)} + r \frac{D'(r)}{H(r)} - rD(r) \frac{H'(r)}{H^2(r)} \\
& = & \frac{D(r)}{H(r)} + r\frac{D'(r)}{H(r)} - (m-1)\frac{D(r)}{H(r)} - 2r \frac{D^2(r)}{H^2(r)} \\
& = & \frac{(2-m) D(r) + rD'(r)}{H(r)} - 2r \frac{D^2(r)}{H^2(r)} \\
& = & \frac{r^{m-1} \Theta'(r) }{H(r)} - 2r \frac{D^2(r)}{H^2(r)} \\
& = & \frac{2r}{H(r)} \int_{\partial B(0,r)} \sum_{i=1}^Q \big\| \frac{\partial f_i}{\partial \nu} \big\|^2 - \frac{2r}{H(r)^2} \big(\int_{\partial B(0,r)} \sum_{i=1}^Q \big\langle
f_i , \frac{\partial f_i}{\partial \nu} \big\rangle \big)^2.
\end{eqnarray*}
Thus,
$$
N'(r) = \frac{2r}{H^2(r)} \big[\int_{\partial B(0,r)} \sum_{i=1}^Q \|f_i\|^2 \int_{\partial B(0,r)} \sum_{i=1}^Q \big\| \frac{\partial f_i}{\partial \nu} \big\|^2 -
\big(\int_{\partial B(0,r)} \sum_{i=1}^Q \big\langle
f_i , \frac{\partial f_i}{\partial \nu} \big\rangle \big)^2 \big]
$$
is nonnegative by Cauchy-Schwartz inequality.
\end{proof}

\begin{Corollary}
Assume that $f \in W^1_2(U, \QQl)$ is squeeze and squash stationary and $r_0 \in (0, \mathrm{dist}(a, \partial U))$ for
$a\in U$.
Then, for a.e. $r \leq r_0$, we have
\begin{equation}\label{cor_freq1}
\frac{d}{dr} \ln \frac{H_a(r)}{r^{m-1}} = \frac{2N_a(r)}{r},
\end{equation}
\begin{equation}\label{cor_freq2}
  \left(\frac{r}{r_0}\right)^{2 N_a(r_0)} \frac{H_a(r_0)}{r_0^{m-1}} \leq \frac{H_a(r)}{r^{m-1}} \leq \left( \frac{r}{r_0}\right)^{2N_a(r)} \frac{H_a(r_0)}{r_0^{m-1}},
\end{equation}
\begin{equation}\label{cor_freq3}
 \frac{D_a(r)}{r^{m-2}} \leq \left(\frac{r}{r_0}\right)^{2N_a(r)} \frac{D_a(r_0)}{r^{m-2}} \frac{N_a(r)}{N_a(r_0)}.
\end{equation}
\end{Corollary}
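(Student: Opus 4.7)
The plan is to prove (\ref{cor_freq1}) by direct computation, derive (\ref{cor_freq2}) by integrating (\ref{cor_freq1}) and exploiting the monotonicity of $N_a$ established in Theorem \ref{monotonic_frequency}, and finally deduce (\ref{cor_freq3}) from (\ref{cor_freq2}) using the definition of $N_a$.

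First I would verify (\ref{cor_freq1}). On any subinterval of $(0,\mathrm{dist}(a,\partial U))$ where $H_a$ is positive, the function $r \mapsto \ln(H_a(r)/r^{m-1})$ is absolutely continuous, since $H_a$ is absolutely continuous and positive. Differentiating and applying the formula $H_a'(r) = \frac{m-1}{r}H_a(r) + 2D_a(r)$ from Lemma \ref{derivative_H}, one gets
\begin{equation*}
\frac{d}{dr}\ln\frac{H_a(r)}{r^{m-1}} = \frac{H_a'(r)}{H_a(r)} - \frac{m-1}{r} = \frac{2D_a(r)}{H_a(r)} = \frac{2N_a(r)}{r}
\end{equation*}
for a.e.\ such $r$. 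Note that (\ref{cor_freq1}) itself shows that $r \mapsto H_a(r)/r^{m-1}$ is nondecreasing (since $N_a \geq 0$), so the assumption $H_a(r_0) \neq 0$ guarantees $H_a(r) > 0$ at least on an interval of the form $(r_*,r_0]$, and in fact on all of $(0,r_0]$ once one excludes the trivial situation $f \equiv Q\lseg 0\rseg$ near $a$ (via the preceding lemma).

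Next, to obtain (\ref{cor_freq2}), I would integrate (\ref{cor_freq1}) from $r$ to $r_0$:
\begin{equation*}
\ln\frac{H_a(r_0)/r_0^{m-1}}{H_a(r)/r^{m-1}} = \int_r^{r_0}\frac{2N_a(s)}{s}\,ds.
\end{equation*}
Because $N_a$ is nondecreasing on $(0,r_0]$ by Theorem \ref{monotonic_frequency}, the integrand is sandwiched between $2N_a(r)/s$ and $2N_a(r_0)/s$, giving
\begin{equation*}
2 N_a(r)\,\ln(r_0/r) \;\leq\; \int_r^{r_0}\frac{2N_a(s)}{s}\,ds \;\leq\; 2 N_a(r_0)\,\ln(r_0/r).
\end{equation*}
Exponentiating, reciprocating, and multiplying through by $H_a(r_0)/r_0^{m-1}$ then yields (\ref{cor_freq2}) after rearrangement.

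Finally, (\ref{cor_freq3}) is purely algebraic: from $N_a(r) = rD_a(r)/H_a(r)$ one has $D_a(r)/r^{m-2} = N_a(r) \cdot H_a(r)/r^{m-1}$. Applying the right-hand inequality of (\ref{cor_freq2}) and substituting $H_a(r_0)/r_0^{m-1} = D_a(r_0)/(N_a(r_0)\,r_0^{m-2})$ produces the asserted bound (the $r^{m-2}$ in the denominator of the stated right-hand side being a typographical slip for $r_0^{m-2}$). The only real subtlety is justifying positivity of $H_a$ throughout $(0,r_0]$ so that both $N_a$ and the logarithm are well defined there; this is handled by combining the previous lemma with the monotonicity of $H_a(r)/r^{m-1}$ already obtained from step one.
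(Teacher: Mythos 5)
Your proposal is correct and follows essentially the same route as the paper: differentiate $\ln(H_a(r)/r^{m-1})$ using $H_a'(r)=\frac{m-1}{r}H_a(r)+2D_a(r)$, integrate from $r$ to $r_0$ and bound $N_a(\rho)$ below by $N_a(r)$ and above by $N_a(r_0)$ via the frequency monotonicity to get \eqref{cor_freq2}, then substitute $D_a(r)/r^{m-2}=N_a(r)\,H_a(r)/r^{m-1}$ to get \eqref{cor_freq3}. Your reading of the right-hand side of \eqref{cor_freq3} as containing a typographical $r^{m-2}$ in place of $r_0^{m-2}$ agrees with the paper's own computation, and your remarks on positivity of $H_a$ only make explicit a point the paper leaves tacit.
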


\begin{proof} For simplicity, assume $a=0\in U$.
First we prove (\ref{cor_freq1}). By equation (\ref{derivative_H}), we have
$$ \frac{d}{dr} \frac{H(r)}{r^{m-1}} = \frac{H'(r)}{r^{m-1}} - (m-1) \frac{H(r)}{r^m} = \frac{2D(r)}{r^{m-1}}.$$
Consequently,
$$\frac{d}{dr} \ln \left( \frac{H(r)}{r^{m-1}} \right) = \frac{2D(r)}{r^{m-1}} \frac{r^{m-1}}{H(r)} = \frac{2N(r)}{r}.
$$
We integrate (\ref{cor_freq1}) from $r$ to $r_0$:
\setlength\arraycolsep{1.4pt}
\begin{eqnarray}\label{computation_cor_freq2}
\ln \frac{H(r_0)}{r_0^{m-1}} - \ln \frac{H(r)}{r^{m-1}} & = &
\int_r^{r_0} \frac{d}{d\rho} \ln \frac{H(\rho)}{\rho^{m-1}} d\rho \nonumber \\
& = & \int_r^{r_0} \frac{2N(\rho)}{\rho}d\rho \nonumber \\
& \geq & 2N(r) \int_r^{r_0} \frac{d\rho}{\rho}.
\end{eqnarray}
Therefore
$$
\ln\left(\frac{H(r_0)}{r_0^{m-1}} \frac{r^{m-1}}{H(r)} \right) \geq \ln \left( \frac{r_0}{r} \right)^{2N(r)},
$$
which yields the first inequality in (\ref{cor_freq2}). Similarly, by bounding above $N(\rho)$ by $N(r_0)$ in (\ref{computation_cor_freq2}), one proves the reverse inequality.

By definition of $N(r)$,
$$ \frac{D(r)}{r^{m-2}} = \frac{H(r)}{r^{m-1}} N(r). $$
Therefore, using (\ref{cor_freq2}), one has
$$
\frac{D(r)}{r^{m-2}} \leq \big(\frac{r}{r_0}\big)^{2N(r)} \frac{H(r_0)}{r_0^{m-1}} N(r) =  \big(\frac{r}{r_0}\big)^{2N(r)} \frac{D(r_0)}{r_0^{m-2}} \frac{N(r)}{N(r_0)},
$$ which is (\ref{cor_freq3}).
\end{proof}

\section{Regularity of stationary maps}

\begin{Proposition}\label{stationary_bounded}
If $f \in W_2^1(U, \QQl)$ is squeeze and squash stationary and let $a\in U$ and
$B(a,3r_0) \subset U$, then $f$ is locally essentially bounded on $B(a,r_0)$. Specifically,
\begin{equation}\label{control_boundedness_stationary}
\|f\|_{L^\infty(B(a,r_0))}^2 \leq \frac{C}{r_0^m} \int_{B(a,3r_0)} |f|^2.
\end{equation}
\end{Proposition}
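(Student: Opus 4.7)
The plan is to extract from the squash stationarity of $f$ a monotonicity for the spherical means of $|f|^2$, and then deduce the pointwise bound by a classical subharmonic-type argument via Lebesgue differentiation.

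First I would observe that identity (\ref{derivative_H}), whose derivation uses only the squash identity (\ref{cor_squash}) together with a polar-coordinate change of variables, holds at every $b \in U$ with $B(b,R) \subset U$. Rewriting it as
\[
\frac{d}{dr}\left[\frac{H_b(r)}{r^{m-1}}\right] = \frac{2 D_b(r)}{r^{m-1}} \geq 0,
\]
I conclude that $r \mapsto H_b(r)/r^{m-1}$ is nondecreasing on $(0, \mathrm{dist}(b, \partial U))$; equivalently, the mean value of $|f|^2$ on $\partial B(b,r)$ is nondecreasing in $r$, the sphere-mean monotonicity characteristic of subharmonic functions.

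Next I would couple this monotonicity with Lebesgue differentiation to prove that for a.e.\ $b \in U$ and every $\rho \in (0, \mathrm{dist}(b, \partial U))$ one has $|f(b)|^2 \leq C \rho^{1-m} H_b(\rho)$. Indeed, Fubini gives $\int_{B(b,\rho)} |f|^2 = \int_0^\rho H_b(s)\, ds$, and the monotonicity $s^{1-m} H_b(s) \leq \rho^{1-m} H_b(\rho)$ for $s \leq \rho$ yields $\rho^{-m} \int_{B(b,\rho)} |f|^2 \leq C \rho^{1-m} H_b(\rho)$. Letting $\rho \to 0$ at a Lebesgue point of $|f|^2$ sends the left-hand side to $C' |f(b)|^2$ while the right-hand side is nondecreasing in $\rho$, whence the claim.

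Finally, for any $b \in B(a, r_0)$ one has $B(b, 2r_0) \subset B(a, 3r_0) \subset U$, and integrating the pointwise bound in $r$ over $[r_0, 2r_0]$ together with Fubini yields
\[
r_0 |f(b)|^2 \leq C \int_{r_0}^{2r_0} \frac{H_b(r)}{r^{m-1}}\, dr \leq \frac{C}{r_0^{m-1}} \int_{B(b, 2r_0)} |f|^2 \leq \frac{C}{r_0^{m-1}} \int_{B(a, 3r_0)} |f|^2,
\]
which is (\ref{control_boundedness_stationary}) at a.e.\ $b \in B(a, r_0)$. The only delicate step is the Lebesgue-differentiation argument, where one must bridge volume averages (which do converge a.e.) and spherical averages (for which only the monotonicity is available); the rest is routine. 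Note that squeeze stationarity and the frequency machinery play no role here --- the estimate rides entirely on the squash identity.
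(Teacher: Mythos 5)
Your proof is correct and follows essentially the same route as the paper's: the sphere-mean monotonicity $\frac{d}{dr}\bigl[H_b(r)/r^{m-1}\bigr]=2D_b(r)/r^{m-1}\geq 0$ obtained from the squash identity, combined with Fubini on the annulus $B(b,2r_0)\setminus B(b,r_0)$ and Lebesgue differentiation at a.e.\ point of $B(a,r_0)$; the paper merely selects one good radius $r_1\in(r_0,2r_0)$ by pigeonhole where you average over all radii, and quotes (\ref{cor_freq2}) where you derive the monotonicity directly from (\ref{derivative_H}). Your closing remark is also accurate: although the paper's proof formally invokes the frequency corollary (stated under both stationarities), the particular inequality used there only needs the squash identity.
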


\begin{proof} Assume $a=0\in U$.  Let $x_0\in B(0,r_0)$. Then we have
\setlength\arraycolsep{1.4pt}
\begin{eqnarray*}
\int_{r_0}^{2r_0} d\rho \int_{\partial B(x_0,\rho)} |f|^2 d\calH^{m-1}
& = & \int_{B(x_0,2r_0) \setminus B(x_0,r_0)} |f|^2 \\
& \leq & \int_{B(0,3r_0)} |f|^2.
\end{eqnarray*}
Therefore there exists some $r_1 \in (r_0, 2r_0)$ such that
$$ \int_{\partial B(x_0,r_1)} |f|^2d\calH^{m-1} \leq \frac{2}{r_0} \int_{B(0,3r_0)} |f|^2. $$
Moreover one has, according to (\ref{cor_freq2}),
\setlength\arraycolsep{1.4pt}
\begin{eqnarray*}
\int_{B(x_0,r)} |f|^2 & = & \int_0^r \int_{\partial B(x_0,\rho)} |f|^2 \\
& = & \int_0^r H_{x_0}(\rho) d\rho \\
& \leq & \int_0^r \big( \frac{\rho}{r_0} \big)^{m-1} H_{x_0}(r_0) d\rho \\
& = & \frac{r^m}{m} \frac{H_{x_0}(r_0)}{r_0^{m-1}} \\
& \leq & \frac{r^m}{m} \frac{H_{x_0}(r_1)}{r_1^{m-1}} \\
& \leq & \frac{Cr^m}{r_0^m} \int_{B(0,3r_0)} |f|^2.
\end{eqnarray*}
Therefore
\begin{equation}\label{estimate_proof_bounded}
\frac{1}{\calL^m(B(x_0,r))} \int_{B(x_0,r)} |f|^2 \leq \frac{C}{r_0^m} \int_{B(0,3r_0)} |f|^2.
\end{equation}
Whenever $x_0$ is a Lebesgue density point of $|f|^2 \in L_1(U)$, by letting $r$ tend to 0, (\ref{estimate_proof_bounded}) gives the desired estimate.
\end{proof}

\begin{Remark}
If $a$ is a Lebesgue density total branch point of a squeeze and squash stationary map $f \in W^1_2(U, \QQl)$, \emph{i.e.} if there exists $y \in \ell_2$ such that
$$
\lim_{r \to 0} \frac{1}{\calL^m(B(a,r))} \int_{B(a,r)} \calG(f(x), Q\lseg y \rseg)^2 = 0,
$$
then $f$ is continuous at $a$. Indeed, apply Proposition \ref{stationary_bounded} to the squeeze and squash
stationary map $\tau_{Q \lseg y \rseg}(f)$, one has that
$$ \lim_{r \to 0}\sup_{x \in B(a,r)} \calG(f(x), f(a))^2 = \lim_{r \to 0} \frac{C}{r^m} \int_{B(a,3r)} | \tau_{Q \lseg y \rseg}(f) |^2 = 0.
$$
\end{Remark}

Now, for $f \in W^1_2(U, \QQl)$ and $B(a,r) \subset U$, we will denote by $\overline{f}_{a,r}$ a mean of $f$ in the ball $B(a,r)$.

\begin{Proposition}\label{VMO}
If $f \in W^1_2(U, \QQl)$ is squeeze and squash stationary, then
\begin{enumerate}
\item[(1)] for any $a$ in $U$,
$$ \lim_{r\rightarrow 0}\Theta_a(r)=\lim_{r \to 0} \frac{1}{r^{m-2}} \int_{B(a,r)} \lno Df\rno^2 = 0.$$
\item[(2)] in case $U = B(0,1)$, $f$ is $VMO$ in the following sense: there is a function $\omega : [0, 2^{-1}] \to \R_+$ satisfying $\displaystyle\lim_{r \to 0} \omega(r) = 0$ such that for any $a \in B(0,2^{-1})$, $r \in [0, 2^{-1}]$,
$$ \frac{1}{\calL^m(B(a,r))} \int_{B(a,r)} \calG^2(f, \overline{f}_{a,r})\leq \omega(r). $$
\end{enumerate}
\end{Proposition}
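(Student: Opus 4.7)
The plan is to derive (1) from the monotonicity and decay of Almgren's frequency function $N_a$ (Theorem~\ref{monotonic_frequency}), and then to bootstrap (1) to the uniform statement (2) by a Dini-type compactness argument combined with the Poincar\'e inequality for $\QQl$-valued Sobolev maps.

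For (1), fix $a \in U$ and $r_0 \in (0, \mathrm{dist}(a, \partial U))$. Proposition~\ref{energy_nondecreasing} ensures that $\Theta_a$ is nondecreasing, so the limit $\Theta_a(0^+) \geq 0$ exists. If $\Theta_a(r_1) = 0$ for some $r_1 > 0$, monotonicity forces $\Theta_a \equiv 0$ on $(0, r_1]$ and we are done. Otherwise $D_a(r) > 0$ for every $r \in (0, r_0]$, which in turn forces $H_a(r) > 0$ there: were $H_a(r) = 0$, the lemma preceding Proposition~\ref{VMO} would give $f \equiv Q\lseg 0 \rseg$ on $B(a, r)$ and hence $D_a(r) = 0$. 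Theorem~\ref{monotonic_frequency} then makes $N_a$ monotone nondecreasing on $(0, r_0]$, so $N_\infty := \lim_{r \to 0^+} N_a(r) \in [0, N_a(r_0)]$ exists. Inequality (\ref{cor_freq3}) now provides decay in two regimes: if $N_\infty > 0$, then $N_a(r) \geq N_\infty$ and $\Theta_a(r) \leq (r/r_0)^{2 N_\infty} \Theta_a(r_0) \to 0$; if $N_\infty = 0$, the trivial bound $(r/r_0)^{2N_a(r)} \leq 1$ combined with $N_a(r)/N_a(r_0) \to 0$ yields $\Theta_a(r) \leq \Theta_a(r_0)\, N_a(r)/N_a(r_0) \to 0$.

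For (2), set $\omega(r) := \sup_{a \in \overline{B(0, 1/2)}} \Theta_a(r)$ for $r \in (0, 1/2]$. For each fixed $r$, the map $a \mapsto \Theta_a(r) = r^{2-m}\int_{B(a,r)}\lno Df\rno^2$ is continuous on $\overline{B(0, 1/2)}$ by absolute continuity of the integral; for each fixed $a$, $r \mapsto \Theta_a(r)$ is nondecreasing and tends to $0$ by (1). A standard Dini-type argument---given $\varepsilon > 0$, pick $r(a)$ with $\Theta_a(r(a)) < \varepsilon$, then a neighborhood $V(a)$ on which $\Theta_{a'}(r(a)) < 2\varepsilon$, extract a finite subcover of $\overline{B(0, 1/2)}$, set $r_\varepsilon$ to be the minimum of the chosen radii, and use monotonicity in $r$---produces $\omega(r) \leq 2\varepsilon$ for $r \leq r_\varepsilon$, so $\omega(r) \to 0$. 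The Poincar\'e inequality for $W^1_2(B(a,r), \QQl)$ established in \cite{BDPG} then gives
$$ \frac{1}{\calL^m(B(a,r))} \int_{B(a,r)} \calG^2(f, \overline{f}_{a,r}) \leq \frac{Cr^2}{\calL^m(B(a,r))} D_a(r) = C' \Theta_a(r) \leq C' \omega(r), $$
so $C'\omega(\cdot)$ serves as the VMO modulus.

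The main obstacle is the degenerate case $N_\infty = 0$ in (1): the Morrey-type exponential factor $(r/r_0)^{2N_\infty}$ in (\ref{cor_freq3}) becomes trivial, and the proof must instead exploit the auxiliary factor $N_a(r)/N_a(r_0)$ carried alongside in that inequality, which is perfectly tailored for this regime. Once the two regimes are split the decay is automatic. The Dini step in (2) and the Poincar\'e step are routine given (1).
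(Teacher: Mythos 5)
Your argument is correct and follows essentially the same route as the paper: for (1) a case split on the limit of the frequency $N_a$ using its monotonicity and (\ref{cor_freq3}), and for (2) the supremum $\omega(r)=\sup_a \Theta_a(r)$, a Dini-type uniformity argument, and the Poincar\'e inequality. The only (harmless) deviation is in the degenerate regime $N_a(r)\to 0$, where the paper writes $\Theta_a(r)=N_a(r)\,H_a(r)/r^{m-1}\le C\,N_a(r)\,\|f\|_{L^\infty}^2$ via Proposition \ref{stationary_bounded}, whereas you exploit the factor $N_a(r)/N_a(r_0)$ already present in (\ref{cor_freq3}); both close that case, and you additionally treat explicitly the degenerate situation $H_a=0$ (hence $f\equiv Q\lseg 0\rseg$ near $a$), which the paper leaves implicit.
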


\begin{proof}
As in (\ref{monotonic_measure}), we introduce
$$ \mu(A) := \int_A \lno Df \rno^2. $$ We consider two cases. \\
\emph{Case 1.} $\displaystyle\lim_{r \to 0} N_a(r) = \alpha > 0$.

We then use the monotonicity of frequency (Theorem \ref{monotonic_frequency}) and (\ref{cor_freq3}) to infer that, for some $r_0 < \mathrm{dist}(a, \partial U)$ and $r \in (0, r_0]$,
$$
\frac{D_a(r)}{r^{m-2}} \leq \left( \frac{r}{r_0}\right)^{2\alpha} \frac{D_a(r_0)}{r_0^{m-2}},
$$
which converges to 0 as $r \to 0$.\\
\emph{Case 2.} $\displaystyle\lim_{r \to 0} N_a(r) = 0$.

Then, by definition of $N_a(r)$ and by Proposition \ref{stationary_bounded},
$$
\frac{D_a(r)}{r^{m-2}} = N_a(r) \frac{H_a(r)}{r^{m-1}} \leq C N_a(r) \|f\|_{L^\infty(U)},
$$
which converges to 0 as well. This establishes (1).

We now come to (2). The functions $\Theta_a : [0,2^{-1}] \to \R_+$, $a \in B(0,2^{-1})$, are monotone nondecreasing with respect of $r$ by Proposition \ref{energy_nondecreasing}, continuous and satisfy $\Theta_a(0) = 0$ by (1). Let us introduce
\begin{equation}\label{definition_omega}
 \omega(r) := \sup_{a \in B(0, 2^{-1})} \frac{\mu(B(a,r))}{r^{m-2}}.
\end{equation}
By the classical Dini theorem, $\displaystyle\lim_{r \to 0} \omega(r) = 0$. Then (2) follows from Poincar\'e's inequality.
\end{proof}

\begin{Proposition}[Logarithmic decay of normalized energy]
Let $f$ be a squeeze and squash stationary map in $W_2^1(U, \QQl)$, $B(0,3) \subset U$. Then there exist $C$ (depending on $\|f\|_{L^\infty(B)}$ and $\alpha \in (0,1)$) such that for every $a \in B(0,2^{-1})$ and $r \in (0,2^{-1}]$ one has
$$ \frac{1}{\calL^m(B(a,r))} \int_{B(a,r)} \calG^2(f, \overline{f}_{a,r}) \leq C \big(\frac{1}{|\ln r|}\big)^\alpha.$$
In fact,
$$
\frac{1}{r^{m-2}} \int_{B(a,r)} \lno Df \rno^2 \leq C \big(\frac{1}{|\ln r|}\big)^\alpha.
$$
\end{Proposition}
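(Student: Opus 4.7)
The plan is to bound $\Theta_a(r):=r^{2-m}\int_{B(a,r)}\lno Df\rno^2$ by combining two complementary estimates coming from the identity $\Theta_a(r)=N_a(r)H_a(r)/r^{m-1}$ and the frequency monotonicity, and then to optimize. The mean-oscillation bound on $\calG(f,\overline{f}_{a,r})$ will follow at once from Poincar\'e's inequality, as in the proof of Proposition \ref{VMO}.

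Since $a\in B(0,1/2)$ and $r\leq 1/2$, we have $B(a,r)\subset B(0,1)=B$, so the hypothesis $\|f\|_{L^\infty(B)}<\infty$ yields $H_a(r)/r^{m-1}\leq C\|f\|_{L^\infty(B)}^2$, whence the \emph{first bound}
$$
\Theta_a(r)=N_a(r)\cdot H_a(r)/r^{m-1}\leq M N_a(r),\qquad M:=C\|f\|_{L^\infty(B)}^2 .
$$
Choosing the reference radius $r_0=1/2$ in \eqref{cor_freq3} and using $N_a(r)/N_a(1/2)\leq 1$ (monotonicity of frequency, Theorem \ref{monotonic_frequency}), we obtain the \emph{second bound}
$$
\Theta_a(r)\leq (2r)^{2N_a(r)}\Theta_a(1/2)\leq A_0\, e^{-2N_a(r)L},\qquad L:=|\log(2r)|,
$$
where $A_0:=\sup_{a\in B(0,1/2)}\Theta_a(1/2)\leq 2^{m-2}\int_{B(0,1)}\lno Df\rno^2<\infty$. (If $H_a(1/2)=0$ then by the lemma in \S7 one has $f\equiv Q\lseg 0\rseg$ on $B(a,1/2)$ and there is nothing to prove.)

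Writing $x=N_a(r)\geq 0$ and taking the minimum of the two bounds,
$$
\Theta_a(r)\leq \min\bigl\{Mx,\;A_0e^{-2xL}\bigr\},
$$
which, viewed as a function of $x$, is maximized at the unique $x^*=x^*(L)$ solving $Mx^*=A_0e^{-2x^*L}$, i.e.\ $x^*e^{2x^*L}=A_0/M$. An elementary asymptotic analysis of this transcendental equation gives $x^*(L)\leq C(1+\log L)/L$ as $L\to\infty$, hence
$$
\Theta_a(r)\leq Mx^*(L)\leq C\,\frac{1+\log|\log(2r)|}{|\log(2r)|}\leq C|\log r|^{-\alpha}
$$
for any fixed $\alpha\in(0,1)$, after enlarging $C$ to absorb the range where $r$ is close to $1/2$. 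The multiple-valued Poincar\'e inequality (already used in the proof of Proposition \ref{VMO}) then gives
$$
\calL^m(B(a,r))^{-1}\int_{B(a,r)}\calG^2(f,\overline{f}_{a,r})\leq C\,\Theta_a(r),
$$
yielding the first inequality of the statement.

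The main obstacle is the optimization step, which requires extracting the asymptotics $x^*\sim(\log L)/(2L)$ from the implicit equation $xe^{2xL}=A_0/M$. This is precisely where the slow, logarithmic decay originates; as observed in the introduction, the resulting rate falls just short of the Dini growth condition that would promote the VMO regularity of $f$ to continuity.
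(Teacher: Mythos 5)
Your argument is correct, and it relies on the same two ingredients as the paper's proof --- the bound $\Theta_a(r)\le C\,N_a(r)$ furnished by Proposition \ref{stationary_bounded} (this is (\ref{log_decay1})) and the decay estimate (\ref{cor_freq3}) combined with the frequency monotonicity of Theorem \ref{monotonic_frequency} (this is (\ref{log_decay2})) --- but you combine them in a genuinely different way. The paper iterates along the doubly exponential radii $\rho_j=2^{-2^j}$ and splits into the cases $\rho_j^{2N_a(\rho_j)}\le 2^{-1}$ and $\rho_j^{2N_a(\rho_j)}> 2^{-1}$, which produces the recursion $\omega(\rho_{j+1})\le\max\{C2^{-j-1},\tfrac12\omega(\rho_j)\}$ and then invokes a standard iteration lemma to get $|\ln r|^{-\alpha}$; you instead treat the unknown value $x=N_a(r)$ as a free parameter at each fixed radius, bound $\Theta_a(r)\le\min\{Mx,\,A_0e^{-2xL}\}$ with $L=|\log(2r)|$, and evaluate the worst case at the crossing point $x^*$. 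This is more direct (no iteration, no appeal to a ``standard'' recursion lemma) and in fact yields the slightly sharper modulus $C(1+\log|\log r|)/|\log r|$, which implies the stated bound for \emph{every} $\alpha\in(0,1)$. Two small points you pass over are equally glossed in the paper and are harmless: the degenerate situations $H_a(r)=0$ or $N_a(2^{-1})=0$ (then $D_a(r)=0$, by the Lemma of Section 7, respectively by monotonicity, and the inequality is trivial), and the fact that (\ref{cor_freq3}) is stated only for a.e.\ $r$, which extends to all $r$ by continuity of $D_a$ and $H_a$. Finally, like the paper's, your constant depends on $A_0=\sup_{a}\Theta_a(2^{-1})$ as well as on $\|f\|_{L^\infty(B)}$; if one wants the dependence promised in the statement literally, a Caccioppoli-type estimate obtained from (\ref{euler-lagragne-squash}) with $Y(x,y)=\chi(x)y$ bounds $A_0$ by $C\|f\|_{L^\infty(B(0,2))}^2$, but neither your argument nor the paper's needs this refinement.
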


\begin{proof}
We start with $\rho_0 = \frac12$ and define a sequence $\{\rho_j\}_j$ inductively by $\rho_{j+1} = \rho_{j}^2$. Thus $ \rho_j = \rho_0^{2^j}$.

Note that, using Proposition \ref{stationary_bounded}, one has
\begin{equation}\label{log_decay1}
\Theta_a(\rho_{j+1}) =\frac{D_a(\rho_{j+1})}{\rho_{j+1}^{m-2}} = N_a(\rho_{j+1}) \frac{H_a(\rho_{j+1})}{\rho_{j+1}^{m-1}} \leq C  N_a(\rho_{j+1}),
\end{equation}
and by (\ref{cor_freq3})
\begin{equation}\label{log_decay2}
\Theta_a(\rho_{j+1}) = \frac{D_a(\rho_{j+1})}{\rho_{j+1}^{m-2}} \leq \left( \frac{\rho_{j+1}}{\rho_{j}}\right)^{2N_a(\rho_{j})} \frac{D_a(\rho_{j})}{\rho_{j}^{m-2}}
= \left( \frac{\rho_{j+1}}{\rho_{j}}\right)^{2N_a(\rho_{j})} \Theta_a(\rho_{j}).
\end{equation}
First suppose that $\rho_j^{2N_a(\rho_j)} \leq 2^{-1}$. Then by (\ref{log_decay2}),
$$ \Theta_a(\rho_{j+1}) \leq \left( \frac{\rho_{j+1}}{\rho_{j}}\right)^{2N_a(\rho_{j})} \Theta_a(\rho_{j}) = \rho_{j}^{2N_a(\rho_j)} \Theta_a(\rho_j) \leq \frac{1}{2} \Theta_a(\rho_j). $$
On the other hand, if $\rho_j^{2N_a(\rho_j)} > 2^{-1}$, then
$$ 2 N_a(\rho_j) \ln \rho_j > \ln \big( \frac{1}{2} \big).$$
As $\rho_j = 2^{-2^j}$, we infer that $N_a(\rho_j) < 2^{-j-1}$. In that case, (\ref{log_decay1}) yields
$$ \Theta_a(\rho_{j+1}) \leq C  2^{-j-1}.$$
Recalling (\ref{definition_omega}), we prove that
$$ \omega(\rho_{j+1}) \leq \max \big\{C 2^{-j-1}  , \frac{1}{2} \omega(\rho_j) \big\}.$$
It is now standard that for some $\alpha \in (0,1)$,
$$ \omega(\rho) \leq C\big(\frac{1}{|\ln \rho |}\big)^{\alpha}.$$
We conclude the proof by applying Proposition \ref{VMO}(2).
\end{proof}

\bigskip
\noindent{\bf Acknowledgements}.  The second author was supported in part by the Project ANR-12-BS01-0014-01 Geometry.
The third author is partially supported by NSF grants DMS1001115
and DMS 1265574,  NSFC grant 11128102,  and a Simons Fellowship in Mathematics.

\addcontentsline{toc}{section}{Bibliography}


\begin{thebibliography}{99}

\bibitem{Almgren} F. Almgren, {\it Almgren's big regularity paper}.  World Scientific Monograph Series in Mathematics, Vol. {\bf 1}, World
Scientific Publishing Con. Inc., River Edge, NJ, 2000.

\bibitem{DS} C. De Lellis and E. N. Spadaro, {\it Q-valued functions revisited}.
Memoirs Amer. Math. Soc. {\bf 211} (2011), no. 991.

\bibitem{D} C. De Lellis, {Errata to {\it Q-valued functions revisited}}. http://user.math.uzh.ch/delellis.
\bibitem{BDPG} P. Bouafia, T. De Pauw, J. Goblet,
{\it Existence of $p$-harmonic multiple valued maps into a separable Hilbert space}. Preprint (2012).

\bibitem{Lu} S.  Luckhaus, {\it Partial H\"older continuity for minima of certain energies among maps into a Riemannian manifold.}
Indiana Univ. Math. J. {\bf 37} (1988), no. 2, 349-367.

\bibitem{Morrey} C. B. Morrey, Multiple integrals in the calculus of variations. Die Grundlehren der mathematischen Wissenschaften, Band 130 Springer-Verlag New York, Inc., New York 1966 ix+506 pp.

\end{thebibliography}
\end{document}